\documentclass[12pt, a4paper, reqno]{amsart}

\usepackage[cm]{fullpage}
\usepackage{amsmath, amssymb, amsthm}
\usepackage[T1]{fontenc}
\usepackage{newpxtext,newpxmath}
\usepackage[utf8]{inputenc}
\usepackage{booktabs}

\usepackage{color}
\definecolor{forestgreen}{rgb}{0.0, 0.27, 0.13}
\definecolor{ultramarine}{rgb}{0.07, 0.04, 0.56}
\definecolor{lust}{rgb}{0.9, 0.13, 0.13}

\usepackage[pagebackref=true,colorlinks=true,linkcolor=ultramarine,
citecolor=ultramarine,urlcolor=forestgreen]{hyperref}
\usepackage{indentfirst}

\newcommand{\cE}{\mathcal{E}}

\newcommand{\cQ}{\mathcal{Q}}

\newcommand{\bQ}{\mathbf{Q}}

\renewcommand{\Re}{\operatorname{Re}}

\DeclareMathOperator{\ord}{ord}

\theoremstyle{plain}% default style
\newtheorem{theorem}{Theorem}[section]
\newtheorem{lemma}[theorem]{Lemma}
\newtheorem{proposition}[theorem]{Proposition}
\newtheorem{corollary}[theorem]{Corollary}

\theoremstyle{definition} % definition style

\theoremstyle{remark} % remark style
\newtheorem*{remark}{Remark}

% For `Sha':
\DeclareFontFamily{U}{wncy}{}
\DeclareFontShape{U}{wncy}{m}{n}{<->wncyr10}{}
\DeclareSymbolFont{mcy}{U}{wncy}{m}{n}
\DeclareMathSymbol{\Sha}{\mathord}{mcy}{"58}

\newcommand{\lp}{\left(}
\newcommand{\rp}{\right)}

\newcommand{\lbrb}[1]{\lp #1 \rp}
\newcommand{\lcrc}[1]{\{ #1 \}}

\newcommand{\quasym}[2]{\left( \dfrac{#1}{#2} \right)_4}
\newcommand{\quadsym}[2]{\left( \dfrac{#1}{#2} \right)_{2}}

\title{Distribution of root numbers of Hecke characters attached to
  some elliptic curves}
\author{Keunyoung Jeong, Jigu Kim, Taekyung Kim}

\address{Department of Mathematical Sciences, Ulsan National Institute
  of Science and Technology, 50 UNIST-gil, Ulsan 44919, Republic of Korea}
\address{Institute of Mathematical Sciences, Ewha Womans University, 52 Ewhayeodae-gil, Seodaemun-gu, Seoul 03760, Republic of Korea}
\address{Cryptolab, Inc., Seoul National University, 1 Gwanak-ro, Gwanak-gu, Seoul 08826, Republic of Korea } 

\keywords{Hecke characters, Complex multiplication, Distribution of root numbers}
\subjclass[2010]{Primary 11G15, Secondary 11N69}

\email{kyjeongg@gmail.com}
\email{jigu.kim.math@gmail.com}
\email{Taekyung.Kim.Maths@gmail.com}

\linespread{1.1}

\begin{document}

\begin{abstract}
  In this paper, we show that an action on the set of elliptic curves
  with $j=1728$ preserves a certain kind of symmetry on the local root
  number of Hecke characters attached to such elliptic curves.  As a
  consequence, we give results on the distribution of the root numbers
  and their average of the aforementioned Hecke characters.
\end{abstract}

\maketitle

\section{Introduction}

Let $K$ be the number field $\mathbf{Q}(i)$ and $O$ its ring of
integers.  We fix an embedding $K \hookrightarrow \mathbf{C}$ once and
for all.  The main object of this paper is the root number of Hecke
characters attached to elliptic curves 
over $K$ with complex multiplication by
$O$.  Unlike the case of root numbers of elliptic curves, root numbers
of Hecke characters associated with the curves need not have
value $\pm 1$, and in general they have values in the complex unit
circle.  However, we will show that there is still a ``symmetry'' on
these root numbers.

Before giving a formal definition of the symmetry, we define some
basic notions.  By the theory of complex multiplication, the elliptic
curve over $K$
with complex multiplication by $O$ is uniquely determined by an
equation
\begin{equation} \label{eqn:intro}
E_d : y^2 = x^3 - dx
\end{equation}
up to isomorphism defined over $K$ for the fourth-power-free element
$d \in O$. Let $\cE$ be the set of $K$-isomorphism classes of such
elliptic curves.  Then the group $K^\times/(K^\times)^4$ acts on $\cE$
by $\overline{x}.E_{d} := E_{xd}$.  By definition, an element $x$ of
$K^\times$ gives an action of order dividing 4, i.e.,
$\overline{x}^4.E_{d}$ is isomorphic to $E_{d}$.

We say that an action induced by $\overline{x}$ \emph{preserves a
  symmetry} on the (local) root numbers when the root numbers are
changed only by a multiple of $(2\pi)/4$ via the action.  Since 
any
root numbers of elliptic curves are either $1$ or $-1$, arbitrary
actions induced by an element of $K^\times/(K^\times)^4$ preserve a
symmetry on the root numbers of elliptic curves.  However, in the case
of Hecke characters attached to elliptic curves with complex
multiplication, it is not true in general.  Nevertheless, we will show
that units and primes of degree two (cf.~Subsection
\ref{sec:gaussian-primes}) preserve a symmetry on the root numbers of
Hecke characters. More precisely, we will compute the ratio between
the root numbers of Hecke character induced by $E_d$ and
$\overline{x}.E_d$ where $x$ is a unit or a prime of degree two.

Intuitively, the existence of such a symmetry-preserving action shows
that the local root numbers do not particularly prefer one quarter circle
than others.  By adapting this intuition, we can show that the set of
local root numbers of Hecke character of elliptic curves with complex
multiplication by $O$ at bad primes is dense in the unit circle.

\begin{theorem}
  \label{main nearball}
  Let $S^1$ be the unit circle in $\mathbf{C}$.  For arbitrary
  $\theta \in S^1$ and $\epsilon>0$, there is an elliptic curve $E$
   over $K$
  having CM by the ring $O$ and a prime $v$ of $K$ such that
  the root number $w_v(\chi_E) \in B_{\theta}(\epsilon)$ where
  $B_z(r)$ denotes the open ball having centre at $z \in \mathbf{C}$
  and radius $r>0$.
\end{theorem}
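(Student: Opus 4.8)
The plan is to realise the prescribed value, up to $\epsilon$, as the local root number at a Gaussian prime of residue degree one that we manufacture by a quartic twist. Fix $\theta\in S^1$ and $\epsilon>0$, and let $\pi$ be a primary prime of $O$ lying above a rational prime $p\equiv 5\pmod 8$. I would work with the curve $E_\pi\colon y^2=x^3-\pi x$ and the prime $\fp=(\pi)$. Since $\ord_\fp(\Delta_{E_\pi})=3$, the curve $E_\pi$ has additive, potentially good reduction at $\fp$; by the theory of complex multiplication $E_\pi$ has CM by $O$ over $K$, so $N_{E_\pi/K}$ is a square of ideals, and comparing exponents at $\fp$ shows the attached Hecke character $\chi_{E_\pi}$ has conductor exponent exactly $1$ at $\fp$. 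Hence the local component $(\chi_{E_\pi})_\fp$ is a ramified character of $K_\fp^\times\cong\bQ_p^\times$ of conductor one, and $w_\fp(\chi_{E_\pi})$ equals $p^{-1/2}$ times a complete Gauss sum over the residue field $O/\fp\cong\bF_p$. By the classical description of $\chi_{E_d}$ (its value at a primary prime $(\alpha)$ being a quartic residue symbol attached to $d$ times $\alpha$), this Gauss sum is, up to an unramified twist and a fourth root of unity, the Gauss sum $g(\chi_4)$ of a quartic character $\chi_4$ modulo $\pi$.

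Next I would determine $w_\fp(\chi_{E_\pi})$ precisely enough to run a density argument. Writing $\chi_2=\chi_4^2$ for the associated quadratic character, the Gauss--Jacobi relation $g(\chi_4)^2=J(\chi_4,\chi_4)\,g(\chi_2)$ together with Gauss's evaluation $g(\chi_2)=\sqrt p$ (valid because $p\equiv 1\pmod 4$) gives
\[
w_\fp(\chi_{E_\pi})^2 = \zeta(\pi)\,\frac{J(\chi_4,\chi_4)}{\sqrt p}, \qquad \zeta(\pi)\in\mu_4,
\]
where the fourth root of unity $\zeta(\pi)$ depends only on $\pi$ modulo a fixed power of $1+i$ (it records the unramified part of $(\chi_{E_\pi})_\fp$ and the discrepancy between the local additive character at $\fp$ and the standard one on $\bF_p$). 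Since $J(\chi_4,\chi_4)$ is a Gaussian integer of norm $p$, it is a unit multiple of a prime $\pi_0$ of $O$ above $p$; thus $w_\fp(\chi_{E_\pi})^2$ is a fourth root of unity times $\pi_0/\sqrt p\in S^1$.

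Now I would invoke the classical equidistribution of the angles of Gaussian primes (Hecke): the arguments $\arg(\pi_0)$ are equidistributed in $[0,2\pi)$, and this persists after imposing $p\equiv 5\pmod 8$ and after multiplying by the $\mu_4$-valued quantity $\zeta(\pi)$, by the standard joint equidistribution of a Hecke Gr\"ossencharacter against a Dirichlet character of bounded modulus. Hence $\{w_\fp(\chi_{E_\pi})^2\}$ is dense in $S^1$, so the image of $\{w_\fp(\chi_{E_\pi})\}$ in $S^1/\{\pm1\}$ is dense (squaring factors through the homeomorphism $S^1/\{\pm1\}\xrightarrow{\sim}S^1$). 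To recover the missing sign I use the symmetry of local root numbers under the action of units: twisting $E_\pi$ by $-1\in O^\times$ multiplies $\chi_{E_\pi}$ by the quadratic character $\xi$ cutting out $\bQ(\zeta_8)/K$, which is unramified at $\fp$, so $w_\fp(\chi_{E_{-\pi}})=\xi(\Frob_\fp)\,w_\fp(\chi_{E_\pi})$, and $\xi(\Frob_\fp)=-1$ precisely because $p\equiv 5\pmod 8$ makes $\fp$ inert in $\bQ(\zeta_8)$. Therefore the set of values $w_v(\chi_E)$, as $E$ ranges over $E_\pi$ and $E_{-\pi}$ for primary primes $\pi$ above primes $p\equiv 5\pmod 8$ and $v=(\pi)$, is dense in $S^1$; given $\theta$ and $\epsilon$ we pick such an $E$ and $v$ with $w_v(\chi_E)\in B_\theta(\epsilon)$.

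The step requiring the most care is the first one: extracting the local component of $\chi_{E_d}$ from the complex-multiplication formula — including the precise unramified part and the root-of-unity discrepancy between the local additive character at $\fp$ and the standard additive character on $\bF_p$ — and matching its $\varepsilon$-factor to the normalised quartic Gauss sum, so that the residual ambiguity is genuinely confined to $\mu_4$. Once this is pinned down, the remaining ingredients — the Gauss--Jacobi identity, the classical equidistribution (indeed, mere density in each angular sector) of Gaussian-prime angles, and the unit-twist symmetry established earlier in the paper — combine routinely. (Alternatively, the conjugate Hecke character $\overline{\chi_{E_\pi}}$, whose local root number at $\fp$ is $\overline{w_\fp(\chi_{E_\pi})}$, supplies the missing sign without invoking the unit twist; I would choose whichever bookkeeping is cleaner.)
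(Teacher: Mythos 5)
Your overall architecture coincides with the paper's: work at a degree-one prime $\fp=(\pi)$ of the quartic twists of $y^2=x^3-\pi x$, show the local root number there is a fixed point of $S^1$ determined by $\arg\pi$ up to a controlled root of unity, get density from the angular equidistribution of Gaussian primes in congruence classes, and kill the residual root of unity by unit twists. The one genuinely different ingredient is welcome: where the paper invokes Matthews' deep evaluation of the quartic Gauss sum (Loxton's conjecture) to pin $G(\chi^v)$ down to $\mu_4\cdot\sqrt{\pi}/|\sqrt{\pi}|$, you square and use the elementary Gauss--Jacobi relation $g(\chi_4)^2=J(\chi_4,\chi_4)\,g(\chi_2)$, accept a sign ambiguity, and recover the sign with the $-1$-twist (your computation that $p\equiv 5\pmod 8$ forces $\xi(\Frob_\fp)=-1$ agrees with the paper's Corollary \ref{cor:nusym} with $m=2$, since $\overline{\quasym{i}{\pi}^2}=\quasym{-1}{\pi}=(-1)^{(p-1)/4}$). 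That trades a hard input for a classical one, at the cost of the extra twisting step.

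However, your key displayed formula is wrong as stated, and it is exactly the step you flagged as delicate. The unramified part of $(\chi_{E_\pi})_\fp$ is \emph{not} $\mu_4$-valued: because the Hecke character has infinity type $z\mapsto z^{-1}$, its value at a uniformiser carries the factor $\pi/|\pi|$, whose argument varies continuously with $\pi$ (this is visible in the paper's Proposition \ref{prop:determination-of-root-number}(1), where $w_\pi(\chi)$ has the explicit factor $\pi/|\pi|$ in front of the Gauss sum, and it is in fact the main engine of the whole theorem). Consequently
\begin{equation*}
w_\fp(\chi_{E_\pi})^2 \in \mu_4\cdot\frac{\pi^2}{|\pi|^2}\cdot\frac{J(\chi_4,\chi_4)}{\sqrt p},
\end{equation*}
so the angle of $w_\fp^2$ is $3\arg\pi$ (or $\arg\pi$, depending on whether $J$ is an associate of $\pi$ or $\overline{\pi}$) up to $\mu_4$, not $\arg\pi_0$ with a $\mu_4$ constant depending only on $\pi$ modulo a power of $1+i$. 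You should also check that the $\mu_4$-part (which involves $\overline{\quasym{\overline{\pi}^{-1}}{\pi}}^{\,2}$, a quadratic symbol not a priori a congruence condition on $\pi$) is actually constant on your congruence class; for $p\equiv 5\pmod 8$ this does follow from the supplementary laws, but it needs saying. The good news is that the error is benign: density of $\{3\arg\pi\}$ over primes in a fixed class is exactly as available as density of $\{\arg\pi_0\}$, so once the bookkeeping is corrected your argument closes in the same way.
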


As primes of degree two preserve a symmetry, when we are given a local
root number, there are certain powers of a prime of degree two that do
not change the given root number when multiplied by the power.

\begin{theorem}
  \label{main manytheta}
  Let $\theta \in S^1$ be fixed.  If there exists a single elliptic
  curve $E_0$ over $K$
  having CM by the ring $O$ and a prime $v$ of $K$ such
  that the root number $w_v(\chi_{E_0}) = \theta$, then there are
  infinitely many (K-isomorphism classes of) elliptic curves 
  $E$ over $K$ having CM
  by $O$ such that $w_v(\chi_E) = \theta$.
\end{theorem}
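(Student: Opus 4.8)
The plan is to manufacture the infinitely many curves by acting on $E_0$ with primes of degree two, exploiting that such actions preserve the symmetry. Write $E_0 = E_{d_0}$ with $d_0\in O$ fourth-power-free. There are infinitely many primes of $O$ of degree two — concretely, the ideals $(p)$ for rational primes $p\equiv 3\pmod 4$ — and we let $P$ be the (still infinite) subset consisting of those coprime to $2d_0$ and to $v$. For $\pi\in P$ the curve $\overline{\pi}.E_0 = E_{\pi d_0}$ again has CM by $O$, lies in a $K$-isomorphism class distinct from that of $E_0$ (since $\pi\nmid d_0$, so $\pi d_0$ is fourth-power-free and $\pi d_0/d_0=\pi$ is not a fourth power), and its Hecke character is $\chi_{E_0}$ twisted by the quartic character $\psi_\pi$ given by the quartic residue symbol modulo $\pi$; in particular $\psi_\pi$ has conductor exactly $\pi$ and is unramified at $v$.

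First I record the transformation law for $w_v$. Twisting by a character unramified at $v$ does not change the conductor exponent at $v$, so all the curves $E_{m d_0}$ with $m$ coprime to $2d_0 v$ have Hecke characters with one and the same conductor exponent at $v$; combining this with the standard behaviour of local root numbers under unramified twists and with the explicit ratio computed earlier for primes of degree two, we obtain
\[
  w_v(\chi_{E_{\pi d_0}}) \;=\; \rho(\pi)\,w_v(\chi_{E_0}) \;=\; \rho(\pi)\,\theta,\qquad \rho(\pi)=\psi_\pi(\varpi_v)^{\,a}\in\mu_4,
\]
where $\varpi_v$ is a uniformizer at $v$, the integer $a$ (the conductor exponent, shifted by the level of the local additive character) is the same for every member of the family, and $\mu_4$ denotes the group of fourth roots of unity. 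Since $\psi_{mm'}=\psi_m\psi_{m'}$, the assignment $m\mapsto \psi_m(\varpi_v)^{a}$ is multiplicative, and iterating the displayed identity along a product $m=\prod_i\pi_i^{e_i}$ of primes of $P$ yields
\[
  w_v(\chi_{E_{m d_0}}) \;=\; \Big(\textstyle\prod_i\rho(\pi_i)^{e_i}\Big)\,\theta .
\]

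Now invoke the pigeonhole principle: $\rho$ takes values in the finite set $\mu_4$ while $P$ is infinite, so there is $\zeta_0\in\mu_4$ with $\rho(\pi)=\zeta_0$ for all $\pi$ in an infinite subset $P_0\subseteq P$. Let $n\in\{1,2,4\}$ be the order of $\zeta_0$ and fix $\pi_\star\in P_0$. For each $\pi\in P_0\setminus\{\pi_\star\}$ put $m_\pi:=\pi_\star^{\,n-1}\pi$; then $m_\pi d_0$ is fourth-power-free and
\[
  w_v(\chi_{E_{m_\pi d_0}}) \;=\; \rho(\pi_\star)^{\,n-1}\rho(\pi)\,\theta \;=\; \zeta_0^{\,n}\,\theta \;=\;\theta .
\]
Finally, for distinct $\pi,\pi'\in P_0\setminus\{\pi_\star\}$ the quotient $m_\pi d_0/m_{\pi'}d_0=\pi/\pi'$ has valuation $1$ at $\pi$ and hence is not a fourth power in $K^\times$, so the $E_{m_\pi d_0}$ are pairwise non-isomorphic over $K$. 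This gives infinitely many $K$-isomorphism classes of elliptic curves over $K$ with CM by $O$ for which $w_v(\chi_E)=\theta$, which is the assertion.

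The only point that really needs care is the transformation law for $w_v$: one must be sure that the twisting character $\chi_{E_{\pi d_0}}/\chi_{E_0}$ is genuinely unramified at $v$ for $\pi\in P$ (it is, being the quartic residue symbol modulo $\pi$, of conductor exactly $\pi$) and that the local root number really transforms as stated under such a twist, which is where the explicit ratio from the earlier analysis enters. An alternative to the pigeonhole step, should one prefer to argue directly, is to read off from that explicit ratio a quartic residue condition on $\pi$ and apply Chebotarev's density theorem to produce infinitely many $\pi\in P$ with $\rho(\pi)=1$; but the argument above avoids this and uses only the multiplicativity of the ratio together with the infinitude of primes of degree two.
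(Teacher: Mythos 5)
Your proposal is correct and rests on the same engine as the paper's proof: twist $E_0$ by primes of degree two (rational primes $q\equiv 3\pmod 4$) prime to $2d_0v$, observe that the twisting character is unramified at $v$, and use the resulting transformation law $w_v(\chi_{E_{\pi d_0}})=\rho(\pi)\,w_v(\chi_{E_0})$ with $\rho(\pi)\in\mu_4$ --- this is exactly the content of the paper's Corollary \ref{cor:expanding-d}, proved there via Proposition \ref{prop:determination-of-root-number}. Where you diverge is the endgame. The paper splits into cases: if $v$ has degree two the ratio is automatically $1$ (because $\quasym{a}{q}=1$ for rational $a$ and $q\equiv 3\pmod 4$), and if $v$ has degree one it invokes Dirichlet's theorem (Lemma \ref{Dirichlet}) to produce infinitely many $q$ with $\quasym{-q}{\pi}=-1$ and then twists by $(-q)^2$. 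You instead treat both cases uniformly by pigeonholing the multiplicative map $\rho$ into $\mu_4$ and forming products $\pi_\star^{n-1}\pi$ whose $\rho$-value is $1$; this buys you independence from any density statement about quartic residue symbols (you need only the infinitude of primes $q\equiv 3\pmod 4$), at the modest cost of the extra fourth-power-free and non-isomorphy checks, which you carry out correctly. Two small caveats, both shared with the paper: the parenthetical claim that the twisting character has conductor exactly $\pi$ is not quite right (it may acquire a factor at $(1+i)$ from the unit and even supplementary laws of quartic reciprocity), though all you actually use is unramifiedness at the odd place $v$; and neither argument covers $v=(1+i)$, where the twist can change the reduction type and hence be ramified at $v$.
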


Local symmetry-preserving properties also lead us into the computation
of the average value of the global root numbers.  Given a suitable
``base element'' $d \in O$, we investigate the local root numbers of
Hecke characters $\chi_{dQ}$, where $Q$ varies over the products of
distinct primes of degree two.  The symmetry-preserving property for
these kinds of primes allows us to compare the local root numbers
between $\chi_d$ and $\chi_{dQ}$.  With a help from quartic
reciprocity and the balanced distribution of quadratic residues, we
can show that the average ``global'' root number (except for the
$2$-part) with such varying $Q$ is equal to zero.

\begin{theorem}
  \label{main average}
  For a real number $X > 0$, let $\mathcal{Q}(X)$ be the set of elements $Q$ of $O$ such that
  \begin{itemize}
  \item $|Q| \leq X$, and
  \item $Q$ is a product of distinct primary primes of degree two, i.e. $Q$ is of the form $Q = \prod(-q)$ for distinct rational primes $q \equiv 3 \pmod{4}$.
  \end{itemize}
  Fix an element $d \in O$ of the form
  \begin{equation*}
    d = \prod_{i \in I} \pi_i \cdot
    \prod_{j \in J} (-q_j)^{n_j},
  \end{equation*}
  where $n_j$ are positive integers, $\pi_i$ are primary primes of degree one, $-q_j$ are
  primary primes of degree two, and $I$, $J$ are finite sets.  For such
  a $d$, let us consider the elliptic curve $E_d$ defined as in
   \eqref{eqn:intro} together with 
  their twists $E_{dQ}$ and their
  corresponding Hecke characters $\chi_{dQ}$ for $Q\in\mathcal{Q}(X)$. Then,
  \begin{equation*}
    \lim_{X \to \infty}\frac{1}{|\cQ(X)|} \sum_{Q \in \mathcal{Q}(X)} \frac{w(\chi_{dQ})}{w_2(\chi_{dQ})} = 0.
  \end{equation*}
\end{theorem}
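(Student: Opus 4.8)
The plan is to decompose the global root number $w(\chi_{dQ})$ into its local components and exploit the symmetry-preserving property of primes of degree two, proven earlier in the paper, to turn the average over $Q$ into a character sum that can be evaluated by quartic reciprocity. First I would write $w(\chi_{dQ}) = \prod_v w_v(\chi_{dQ})$, where $v$ runs over the archimedean place and the finite places dividing $2dQ$. The archimedean root number and the local root numbers at primes dividing $d$ (but not $Q$) are, by the symmetry-preserving results for degree-two primes, either constant in $Q$ or modified only by an explicit root of unity depending on $Q \bmod$ (something $2$-adic); after dividing out by $w_2(\chi_{dQ})$ the $2$-adic ambiguity is removed, so the ratio $w(\chi_{dQ})/w_2(\chi_{dQ})$ factors as a fixed constant $c(d)$ of absolute value $1$ times a product $\prod_{q \mid Q} \eta_q$ of local contributions, one for each primary degree-two prime $-q$ dividing $Q$, where $\eta_q$ depends on $-q$ and on $d$.

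Next I would identify each local factor $\eta_q$ explicitly. Since $-q$ is a primary prime of degree two (i.e. $q \equiv 3 \pmod 4$ a rational prime inert in $K$), the local root number of $\chi_{dQ}$ at the place above $q$ should be computable by the standard formula for root numbers of the Hecke character attached to $E_{dQ}$, and by the symmetry computation in the body of the paper (the ``ratio between root numbers of $\chi_{E_d}$ and $\overline{x}.E_d$ for $x$ a prime of degree two'') it should come out to a normalised quartic residue symbol, something of the shape $\eta_q = \left( \dfrac{\alpha_d}{q} \right)_{4}$ or a product of such, where $\alpha_d \in O$ is built from $d$ and the other prime factors of $Q$. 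The cross-terms between distinct primes $q, q' \mid Q$ are governed by quartic reciprocity: $\left( \dfrac{-q}{-q'} \right)_4 \left( \dfrac{-q'}{-q} \right)_4$ is $\pm 1$ and, for primary degree-two primes, reciprocity makes this symmetric, so the double product over pairs $\{q,q'\}$ collapses to a product of Jacobi-type symbols with a controlled sign. After this bookkeeping, $w(\chi_{dQ})/w_2(\chi_{dQ}) = c(d) \cdot \prod_{q \mid Q} \left( \dfrac{\beta_d}{q} \right)_{2}$ for a fixed $\beta_d$ depending only on $d$ — in other words, a genuine quadratic character of $q$ — possibly times a sign depending on $|Q|$ or the number of prime factors of $Q$, which I would absorb or track separately.

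With $Q = \prod_{q \in T}(-q)$ parametrised by finite sets $T$ of rational primes $\equiv 3 \pmod 4$ with $\prod q \le X$, the average becomes
\begin{equation*}
  \frac{c(d)}{|\cQ(X)|} \sum_{Q \in \cQ(X)} \prod_{q \mid Q} \left( \frac{\beta_d}{q} \right)_{2},
\end{equation*}
and the sum over squarefree products factors, via multiplicativity over the primes $q$, into a weighted product $\prod_{q}\bigl(1 + \left( \tfrac{\beta_d}{q} \right)_2\bigr)$ truncated by the size constraint $|Q| \le X$. Since $\beta_d$ is not a square in $K$ (this needs a short argument from the explicit form of $d$ — it is a product of distinct primes with one of them appearing to an odd power, so the symbol $\left( \tfrac{\beta_d}{q} \right)_2$ is genuinely nontrivial and, by Chebotarev / Dirichlet, equidistributed in $\{\pm 1\}$ over primes $q \equiv 3 \pmod 4$), the factors $1 + \left( \tfrac{\beta_d}{q} \right)_2$ vanish on a positive-density set of $q$, forcing cancellation: the numerator is $o(|\cQ(X)|)$. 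Quantitatively I would compare $\sum_{Q \in \cQ(X)} \prod_{q \mid Q}\left( \tfrac{\beta_d}{q}\right)_2$ against $|\cQ(X)| = \#\{Q : |Q| \le X\}$ by a Rankin-type / Wirsing–Halász argument on multiplicative functions supported on squarefree integers composed of primes $\equiv 3 \pmod 4$: the mean value of a $\pm 1$-valued multiplicative function whose average over primes is bounded away from $1$ tends to $0$, so the ratio tends to $0$ and the theorem follows.

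The main obstacle I anticipate is the second step: pinning down $\eta_q$ precisely, i.e. showing the local root number at a degree-two prime really is (up to a controlled sign) a quartic or quadratic residue symbol in the right variable, and then correctly handling the quartic-reciprocity cross-terms so that the resulting multiplicative function is $\pm 1$-valued and has a genuinely nontrivial Euler factor — rather than, say, degenerating to the trivial character for some exceptional $d$. The analytic cancellation in the final step is comparatively standard (Halász's theorem or even elementary sieve bounds suffice), but it hinges entirely on the nontriviality of $\left( \tfrac{\beta_d}{\cdot} \right)_2$, which is where the hypothesis on the shape of $d$ (in particular the odd exponents $n_j$ and the square-free-ness built into the primes $\pi_i$) must be used.
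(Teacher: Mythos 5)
Your outline matches the paper's proof in all essentials: decompose $w/w_2$ into local factors, use the degree-two twisting formulas to write the ratio as a fixed unimodular constant times a multiplicative function of $Q$, and then obtain cancellation from a Wirsing/Hal\'asz-type mean value theorem (the paper's Proposition \ref{arithmeticprogression}, itself proved from Wirsing's theorems and a Williams-type Euler product estimate, plays exactly the role you assign to Hal\'asz). The step you flag as the main obstacle --- pinning down $\eta_q$ and explaining why the symbols degenerate to a genuine quadratic character --- is indeed where the content lies, and your guesses about the mechanism are off in two ways worth recording. First, the cross-terms between distinct degree-two primes $q, q' \mid Q$ are not ``governed by reciprocity'': they are identically $1$, because the quartic residue symbol of a nonzero rational integer modulo a degree-two prime is trivial (Proposition \ref{prop:QRS-properties}(5)). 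Second, the passage from quartic to quadratic symbols is not bookkeeping of cross-terms but a doubling phenomenon: for each degree-one prime $\pi_i \mid d$, the symbol $\overline{\quasym{-q}{\pi_i}}$ enters once through the change of the local root number at $\pi_i$ under twisting by $-q$ (Corollary \ref{cor:expanding-d}(1)), and a second time through the local root number at the new bad place $q$ itself, which equals $\overline{\quasym{d_1}{-q}} = \prod_{i}\overline{\quasym{-q}{\pi_i}}$ after reciprocity; the product of the two occurrences is the square of a quartic symbol, i.e.\ the quadratic symbol $\quadsym{-q}{p_i}$.

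Consequently your $\beta_d$ is $P = \prod_{i \in I} p_i$, built only from the degree-one primes of $d$; the degree-two factors $(-q_j)^{n_j}$ contribute nothing oscillatory (again by Proposition \ref{prop:QRS-properties}(5)), so your suggestion that nontriviality rests on ``the odd exponents $n_j$'' points at the wrong place. The character $\quadsym{\cdot}{P}$ is nontrivial precisely when $I \neq \emptyset$, and this is the real hypothesis: if $I = \emptyset$, the ratio $w(\chi_{dQ})/w_2(\chi_{dQ})$ is constant on each piece $\cQ_f(X)$ of the partition of $\cQ(X)$ according to which $q_j \mid d$ also divide $Q$ (bookkeeping your sketch omits but which is routine), and the average does not vanish. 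Granting $I \neq \emptyset$, your analytic endgame is sound and is essentially what the paper does, phrased there as equidistribution of $\cQ(X)$ in residue classes modulo $P$ rather than as a direct mean value of a unimodular multiplicative function.
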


The ``ideal'' statement in this direction would be
\begin{equation*}
\lim_{X \to \infty} \frac{1}{|\cE_d(X)|} \sum_{E \in \cE_d(X)}w(\chi_E) = 0,
\end{equation*}
where $\cE_d(X)$ is the set of elliptic curves obtained by twisting
$E_d$ with an element in $O$ whose absolute value is less than $X$.
However, we have a trouble in computing the local root number at a
prime above $2$, and we only have a symmetry on primes of degree two.
For these reasons, we use $\mathcal{Q}(X)$ instead of $\mathcal{E}_d$
and  $w(\chi_E)/w_2(\chi_E)$ instead of $w(\chi_E)$.

%The results of this paper are also work for elliptic curves with complex multiplication by $\bQ(\zeta_3)$, but do not work for other imaginary quadratic fields including class number one cases because we heavily use the theory on the quartic residue symbol.

This paper is organised as follows: in Section
\ref{sec:preliminaries}, we recall some basic facts about the Gaussian
integers $O$, elliptic curves with complex multiplication by $O$, and
Hecke characters associated by elliptic curves with complex
multiplication by $O$.  In Section \ref{sec:gener-root-numb} we give
certain concrete computations on such Hecke characters, even though
these topics are already studied by \cite{RS}. In Section
\ref{sec:proofs-main-results} we will give a proof for the main
results of this paper.

\section{Preliminaries}
\label{sec:preliminaries}

\subsection{Gaussian primes}
\label{sec:gaussian-primes}

Consider the number field $K = \mathbf{Q}(i)$ and its ring of
integers $O = \mathbf{Z}[i]$.  The elements of
$O$ are called \emph{Gaussian integers}.  The unit elements
amongst them are exactly $1, -1, i$ and $-i$.  A
non-unit $\alpha \in O$ is called to be \emph{primary} if
$\alpha \equiv 1 \pmod{2(1+i)}$.  Such primary elements
$\alpha = a + bi$ are classified as the elements with
$(a,b) \equiv (1,0) \pmod{4}$ or $(a,b) \equiv (3,2) \pmod{4}$.  It is
well-known that if $\alpha$ is a non-unit element in $O$
with $(1+i) \nmid \alpha$, there is a unique unit $u$ such
that $\tilde{\alpha} = u\alpha$ is primary.  Since $K$ has class
number one, the ring $O$ is a unique factorisation domain,
and thus for any non-zero $d \in O$, we have a unique primary
prime decomposition
\begin{equation}
  \label{eq:primary-prime-decomposition}
  d = i^{n_u} \cdot (1 + i)^{n_2} \cdot \prod_{\pi:
    \text{primary primes}} \pi^{n_{\pi}},
\end{equation}
with $n_u \in \{ 0, 1, 2, 3 \}$ and $n_2, n_{\pi} \ge 0$ being
integers, where the product runs over all \emph{odd} (elements not
divisible by $(1 + i)$) primary prime elements in $O$.  We say that a
prime ideal of $O$, or a prime element generating the ideal, or even
the valuation corresponding the ideal, is \emph{of degree one}
(resp.~\emph{of degree two}) if its residual degree is equal to one
(resp.~two).

% Let $\mathfrak{p}$ be a prime ideal of $K$ unramified over a rational
% prime $p$ of $\mathbf{Q}$.  Unramifiedness in this case means
% $p \neq 2$.  We say such a $\mathfrak{p}$ is \emph{of degree one} if
% the residue degree with respect to $\mathfrak{p}$ and $p$ is one: in
% other words, the residue field of $\mathfrak{p}$ is identical to its
% prime field.  This happens if and only if $p \equiv 1 \pmod{4}$.  On
% the other hand, an unramified prime $\mathfrak{p}$ is called \emph{of
%   degree two} if its residue field is the degree two extension of its
% prime subfield.  This happens if and only if $p \equiv 3
% \pmod{4}$.\footnote{TK: Although it is written by myself, I wonder if
%   this paragraph is too detailed and needs to get rid of...}

\subsection{Quartic residue symbol and quartic reciprocity}
\label{sec:quart-resid-symb}

Suppose that $\pi$ is an odd prime element of $O$ and $\alpha \in O$
is an element prime to $\pi$.  The \emph{quartic residue character} of
$\alpha \pmod{\pi}$ is the unique unit $i^k$
($k \in \{ 0, 1, 2, 3 \}$) such that
$\alpha^{\frac{N\pi - 1}{4}} \equiv i^k \pmod{\pi}$.  Here $N\pi$ is
the absolute norm of $\pi$ (i.e.~the size of the residue field
$O/\pi O$) and the existence of such unit is guaranteed by Fermat's
little theorem: $\alpha^{N\pi -1} \equiv 1 \pmod{\pi}$.  The quartic
residue character of $\alpha$ modulo $\pi$ is denoted by
$\quasym{\alpha}{\pi}$.  We use the following well-known properties on
the quartic residue character in the present paper.

\begin{proposition}[Properties of the quartic residue symbol]
\label{prop:QRS-properties}
Let $\pi = a + bi$ be an odd prime element in $O$
and $\alpha, \beta \in O$ Gaussian integers prime to $\pi$.
Then the following hold.
  \begin{enumerate}
  \item $\quasym{\alpha\beta}{\pi} = \quasym{\alpha}{\pi}
    \quasym{\beta}{\pi}$.
  \item $\overline{\quasym{\alpha}{\pi}} =
    \quasym{\overline{\alpha}}{\overline{\pi}}$, where
    $\overline{(\cdot)}$ denotes the complex conjugation.
  \item If $\pi'$ is an associate of $\pi$, then $\quasym{\alpha}{\pi}
    = \quasym{\alpha}{\pi'}$.
  \item If $\alpha \equiv \beta \pmod{\pi}$, then
    $\quasym{\alpha}{\pi} = \quasym{\beta}{\pi}$.
  \item Let $q$ be a rational prime congruent to 3 modulo 4.  Then for
    any $a \in \mathbf{Z}$ with $a \neq 0$ and $q \nmid a$, we have
    $\quasym{a}{q} = 1$.
  \end{enumerate}
\end{proposition}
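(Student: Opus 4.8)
The plan is to read every item straight off the defining congruence
\[
  \alpha^{\frac{N\pi-1}{4}} \equiv \quasym{\alpha}{\pi} \pmod{\pi},
\]
using that the representing fourth root of unity is uniquely determined modulo $\pi$. That uniqueness is where one first uses that $\pi$ is odd: a degree-one prime lies over a rational prime $p\equiv 1\pmod 4$, while a degree-two prime $q$ satisfies $q^2\equiv 1\pmod 8$, so in either case $(N\pi-1)/4$ is an integer, and the four units $1,i,-1,-i$ stay pairwise distinct in the field $O/\pi O$ because $\pi\nmid 2$. Granting this, items (1), (3) and (4) are formal. For (1), raise $\alpha\beta$ to the power $(N\pi-1)/4$ and split the exponent, then invoke uniqueness. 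For (3), an associate $\pi'=u\pi$ generates the same ideal and has the same norm, so the defining congruence — and hence the symbol — is literally unchanged. For (4), $\alpha\equiv\beta\pmod\pi$ forces $\alpha^{(N\pi-1)/4}\equiv\beta^{(N\pi-1)/4}\pmod\pi$, and uniqueness again concludes.

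For (2), I would apply complex conjugation — a ring automorphism of $O$ fixing $\mathbf{Z}$ — to the defining congruence for $\quasym{\alpha}{\pi}$. It sends $\pi$ to the prime $\overline\pi$ (still odd, since $1+i\mid\pi$ iff $1+i\mid\overline\pi$), sends $\alpha$ to an element prime to $\overline\pi$, and preserves norms, $N\overline\pi=N\pi$. Hence $\overline{\alpha}^{\,(N\overline\pi-1)/4}\equiv\overline{\quasym{\alpha}{\pi}}\pmod{\overline\pi}$, and since $\overline{i^k}=(-i)^k$ is again one of $1,i,-1,-i$, uniqueness gives $\quasym{\overline\alpha}{\overline\pi}=\overline{\quasym{\alpha}{\pi}}$.

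Item (5) is the only one carrying arithmetic content. Since $q\equiv 3\pmod 4$ is inert in $K$, the element $q$ is an odd prime of $O$ of degree two with $Nq=q^2$; and $4\mid q+1$, so $\tfrac{q^2-1}{4}=(q-1)\cdot\tfrac{q+1}{4}$ with $\tfrac{q+1}{4}$ a non-negative integer. For $a\in\mathbf{Z}$ with $q\nmid a$, Fermat's little theorem in $\mathbf{Z}/q\mathbf{Z}$ gives $a^{q-1}\equiv 1\pmod q$, hence $a^{(q^2-1)/4}=\bigl(a^{q-1}\bigr)^{(q+1)/4}\equiv 1\pmod q$, whence $\quasym{a}{q}=1$.

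I do not expect a real obstacle: each assertion reduces to uniqueness of the representative modulo $\pi$, and the only genuine computation is the exponent bookkeeping $\tfrac{q^2-1}{4}=(q-1)\tfrac{q+1}{4}$ in (5), which is exactly where the hypothesis $q\equiv 3\pmod 4$ (equivalently, $q$ inert of degree two) enters. The mild care needed elsewhere is just to note, once and for all, that $(N\pi-1)/4$ is an integer for every odd prime $\pi$, so that the symbol is well defined before the properties are stated.
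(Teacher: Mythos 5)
Your proposal is correct: each item does follow from the defining congruence $\alpha^{(N\pi-1)/4}\equiv\quasym{\alpha}{\pi}\pmod{\pi}$ together with the uniqueness of the representing fourth root of unity (which holds because $\pi$ is odd, so $1,i,-1,-i$ remain distinct modulo $\pi$), and your exponent computation $\tfrac{q^2-1}{4}=(q-1)\tfrac{q+1}{4}$ combined with Fermat's little theorem correctly handles item (5). The paper itself gives no proof --- it records these as well-known properties --- so there is nothing to compare against; your argument is the standard one and is complete.
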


For future reference, here we also give the quartic reciprocity law.

\begin{theorem}[Quartic Reciprocity]
  \label{th:quartic-reciprocity}
  Suppose that $\alpha, \beta \in O$ are primary (hence not
  even) relatively prime non-unit elements.  Then
  \begin{equation}
    \label{eq:main-reciprocity}
    \quasym{\alpha}{\beta} \quasym{\beta}{\alpha}^{-1} =
    (-1)^{\frac{N\alpha - 1}{4} \frac{N\beta - 1}{4}},
  \end{equation}
  or equivalently,
  \begin{equation}
    \label{eq:main-reciprocity-2}
    \quasym{\beta}{\alpha} =
    \begin{cases}
      - \quasym{\alpha}{\beta} & \text{ if both $\alpha$ and $\beta$
        are $\equiv 3+ 2i \pmod{4}$,}\\
      \quasym{\alpha}{\beta} & \text{ otherwise.}
    \end{cases}
  \end{equation}
  Furthermore, we have the following auxiliary relations for a primary
  prime $\pi = a + bi$:
  \begin{equation}
    \label{eq:reciprocity-units-and-evens}
    \quasym{i}{\pi} = i^{\frac{-a+1}{2}} \qquad
    \text{and} \qquad \quasym{1 + i}{\pi} = i^{\frac{a - 1 -b -b^2}{4}}.
  \end{equation}
\end{theorem}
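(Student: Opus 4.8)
The plan is to establish the main law \eqref{eq:main-reciprocity} first for two distinct primary primes $\pi,\lambda$, then to bootstrap to arbitrary primary elements, and finally to prove the auxiliary relations \eqref{eq:reciprocity-units-and-evens} by a direct computation. For the bootstrap, recall that every primary non-unit is exactly a product of primary primes---any unit ambiguity in \eqref{eq:primary-prime-decomposition} is removed by the primary normalisation---that the symbol is multiplicative in its upper argument by part~(1) of Proposition~\ref{prop:QRS-properties} and is extended multiplicatively in its lower argument by the Jacobi-symbol convention, and that the right-hand exponent is bi-additive modulo $2$: writing $u=N x$, $v=N y$ for odd primary $x,y$, one has $u,v\equiv 1\pmod 4$ and $\frac{uv-1}{4}-\frac{u-1}{4}-\frac{v-1}{4}=\frac{(u-1)(v-1)}{4}\equiv 0\pmod 2$. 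Hence \eqref{eq:main-reciprocity} for general primary $\alpha,\beta$ follows from the prime case, and \eqref{eq:main-reciprocity-2} is merely the evaluation of the sign $(-1)^{\frac{N\pi-1}{4}\frac{N\lambda-1}{4}}$: reducing $\frac{a^2+b^2-1}{4}$ modulo $2$ under the primary normalisation shows it is odd precisely for primes $\equiv 3+2i\pmod 4$, so the sign is $-1$ exactly when both primes are of this type.

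The main tool for the prime case is the arithmetic of quartic Gauss and Jacobi sums. For a primary prime $\pi$ I work in the residue field $O/\pi$ of order $N\pi$, let $\chi_\pi=\quasym{\cdot}{\pi}$ be the quartic character on $(O/\pi)^\times$, fix a nontrivial additive character $\psi$ of $\mathbb{F}_p$, and form $g(\chi_\pi)=\sum_{t}\chi_\pi(t)\,\psi(\operatorname{Tr}(t))\in\bZ[\zeta_p,i]$. I will use the standard identities $g(\chi_\pi)\,g(\overline{\chi_\pi})=\chi_\pi(-1)\,N\pi$ and $g(\chi_\pi)^2=\chi_\pi(-1)\,J(\chi_\pi,\chi_\pi)\,g(\chi_\pi^2)$, where $\chi_\pi^2$ is the quadratic character and $g(\chi_\pi^2)$ its Gauss sum, together with Gauss's identification of the Jacobi sum $J(\chi_\pi,\chi_\pi)\in O$ (for a degree-one $\pi$) with the primary prime $\pi$ itself after normalisation. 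Combining these gives the explicit factorisation $g(\chi_\pi)^4=\pi^3\overline{\pi}$; note also $\chi_\pi(-1)=\quasym{-1}{\pi}=(-1)^{(N\pi-1)/4}$, so the target sign is already latent in these formulas.

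To compare the two symbols I evaluate $g(\chi_\pi)^{N\lambda-1}$ modulo $\lambda$ in two ways. On the one hand, the factorisation $g(\chi_\pi)^4=\pi^3\overline{\pi}$ gives $g(\chi_\pi)^{N\lambda-1}=(\pi^3\overline{\pi})^{(N\lambda-1)/4}\equiv\quasym{\pi}{\lambda}^3\,\quasym{\overline{\pi}}{\lambda}\pmod{\lambda}$ straight from the definition of the quartic symbol. On the other hand, the Frobenius substitution $\zeta_p\mapsto\zeta_p^{N\lambda}$, $i\mapsto i^{N\lambda}$---which realises the $N\lambda$-power map modulo $\lambda$---reindexes the Gauss sum and extracts $g(\chi_\pi)^{N\lambda}\equiv\quasym{N\lambda}{\pi}^{-1}g(\chi_\pi)\pmod{\lambda}$ (up to a conjugation of $\chi_\pi$ when $N\lambda\equiv 3\pmod 4$), so that $g(\chi_\pi)^{N\lambda-1}\equiv\quasym{\lambda}{\pi}^{-1}\quasym{\overline{\lambda}}{\pi}^{-1}\pmod\lambda$ after $\quasym{N\lambda}{\pi}=\quasym{\lambda}{\pi}\quasym{\overline{\lambda}}{\pi}$. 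Equating the two evaluations and disentangling the individual symbols with the conjugation relation~(2) of Proposition~\ref{prop:QRS-properties} (and the companion congruence obtained by replacing $\lambda$ with $\overline{\lambda}$) yields \eqref{eq:main-reciprocity}. The main obstacle is precisely the root-of-unity bookkeeping in this last step: one must correctly track the value of the quadratic Gauss sum $g(\chi_\pi^2)\in\{\pm\sqrt{N\pi},\pm i\sqrt{N\pi}\}$, the factors $\chi_\pi(-1)$ and $\chi_\lambda(-1)$, and the primary normalisation of $J(\chi_\pi,\chi_\pi)$; it is here that the ``primary'' hypothesis is indispensable and where the sign in \eqref{eq:main-reciprocity-2} is forced. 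For the cases involving a degree-two prime $\lambda=-q$ (whose residue field is $\mathbb{F}_{q^2}$) the same Frobenius argument runs over $\mathbb{F}_{q^2}$, and several of the relevant symbols collapse via part~(5) of Proposition~\ref{prop:QRS-properties}; alternatively, the quartic Hilbert product formula $\prod_v(\alpha,\beta)_v=1$ proves the whole law uniformly, with the single nontrivial input being the local symbol at the dyadic prime $(1+i)$---that dyadic computation would then be the main obstacle.

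For the auxiliary relations \eqref{eq:reciprocity-units-and-evens} I argue directly. The definition gives $\quasym{i}{\pi}\equiv i^{(N\pi-1)/4}\pmod\pi$, and since $\pi$ is odd the two fourth roots of unity coincide; reducing $(N\pi-1)/4=(a^2+b^2-1)/4$ modulo $4$ under the primary normalisation $(a,b)\equiv(1,0)$ or $(3,2)\pmod 4$ produces $i^{(1-a)/2}$. For $1+i$ I use $(1+i)^2=2i$ to get $\quasym{1+i}{\pi}^2=\quasym{2}{\pi}\,\quasym{i}{\pi}$, insert the classical value $\quasym{2}{\pi}=i^{-b/2}$ (an evaluation of a quadratic Gauss sum, legitimate since $b$ is even for primary $\pi$), and extract the correct square root using the primary normalisation to fix the branch, which yields $i^{(a-1-b-b^2)/4}$.
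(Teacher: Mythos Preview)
The paper does not prove this theorem at all: it is stated in the preliminaries as the classical law of quartic (biquadratic) reciprocity, to be used as input for the later computations, and no argument is given. So there is no ``paper's own proof'' to compare against; your proposal is a sketch of the standard Gauss--Jacobi sum proof (essentially the one in Ireland--Rosen, Chapter~9, which the paper already cites elsewhere).

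As a sketch your outline is sound. The reduction from general primary $\alpha,\beta$ to primary primes via multiplicativity and the bi-additivity of $\frac{N(\cdot)-1}{4}\bmod 2$ is correct. The core identity $g(\chi_\pi)^4=\pi^3\overline{\pi}$ is right, and evaluating $g(\chi_\pi)^{N\lambda-1}$ two ways modulo $\lambda$ is exactly the standard mechanism. Two places deserve tightening if you intend this as a full proof rather than a plan. First, the ``disentangling'' step after equating the two evaluations is where all the content lives: you obtain a relation among $\quasym{\pi}{\lambda}$, $\quasym{\overline{\pi}}{\lambda}$, $\quasym{\lambda}{\pi}$, $\quasym{\overline{\lambda}}{\pi}$, and extracting \eqref{eq:main-reciprocity} from this requires a second, independent relation (typically obtained by swapping the roles of $\pi$ and $\lambda$, or by conjugating) together with careful use of $\chi_\pi(-1)=(-1)^{(N\pi-1)/4}$; your parenthetical ``up to a conjugation of $\chi_\pi$ when $N\lambda\equiv 3\pmod 4$'' is misplaced, since $N\lambda\equiv 1\pmod 4$ for every primary $\lambda$. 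Second, for the supplementary law for $1+i$, the squaring trick $(1+i)^2=2i$ only pins down $\quasym{1+i}{\pi}$ up to sign, and ``extract the correct square root using the primary normalisation'' is not a proof---one actually needs a direct Gauss-sum computation (or an evaluation of $\quasym{1+i}{\pi}$ via $2^{(N\pi-1)/4}$ and a careful congruence argument) to fix that sign. These are the genuine obstacles you yourself flag; the rest is routine.
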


\subsection{Elliptic curves having CM by
  \texorpdfstring{$O$}{the Gaussian ring}}
\label{sec:cm-elliptic-curves}

Let $E$ be an elliptic curve defined over the algebraic closure of
$\mathbf{Q}$ having complex multiplication by $O$.  From the
classical theory of complex multiplication, such $E$ is defined over
the Hilbert class field of $K$, which is $K$ itself and its
$j$-invariant is $1728$.  Furthermore, such elliptic curves are
defined by the Weierstrass equation
\begin{equation}
  \label{eq:Weq}
  y^2 = x^3 - dx,\qquad d \in O
\end{equation}
where the discriminant of the equation $\Delta = 2^6d^3 \neq 0$.  The
curve defined by the equation \eqref{eq:Weq} is denoted by $E_d$
 over $K$.
Note that two such equations for $d$ and $d'$ define the 
$K$-isomorphic
curve if and only if $d' = x^4d$ for some $x \in O$.  For convenience,
we therefore assume $d$ is fourth-power-free.

For future reference, we also investigate the reduction type of $E$
modulo the prime $(1+i)$.  This can be easily verified by
using Tate's algorithm (e.g.~see \cite[\S IV.9]{advAEC}).

\begin{lemma}
  \label{Tate}
  Let $E/K$ be an elliptic curve defined by a Weierstrass equation
  \eqref{eq:Weq} with fourth-power-free $d$.  We write
  $\pi = 1 + i \in K$ and consider the power series expansion of $d$
  in the completion $K_{\pi}$ of $K$ with respect to $\pi$:
  \begin{equation}
    \label{eq:expansion-of-d}
    d = \sum_{j=0}^{\infty} d_j \pi^j, \qquad d_j \in \{ 0, 1 \}.
  \end{equation}
  Then the reduction types of $E$ and corresponding conductor
  exponents are given in terms of the values of $d_j$ in the
  Table \ref{table:1}.

  \begin{table}[ht]
\begin{tabular}{@{}llllllll@{}}
\toprule
$d_0$ & $d_1$ & $d_2$ & $d_3$ & $d_4$ & $d_5$ & Reduction Type      & Conductor Exponent \\ \midrule
0     & 0     & 0     & 1     & -     & -     & $\mathrm{III}^\ast$ & 14                 \\
0     & 0     & 1     & 0     & -     & -     & $\mathrm{I}_4^\ast$ & 10                 \\
0     & 0     & 1     & 1     & -     & -     & $\mathrm{I}_2^\ast$ & 12                 \\
0     & 1     & -     & -     & -     & -     & $\mathrm{III}$      & 14                 \\
1     & 0     & 0     & 0     & -     & -     & $\mathrm{I}_2^\ast$ & 6                  \\
1     & 0     & 0     & 1     & -     & -     & $\mathrm{I}_0^\ast$ & 8                  \\
1     & 0     & 1     & 0     & 0     & 0     & good                & 0                  \\
1     & 0     & 1     & 0     & 0     & 1     & $\mathrm{II}^\ast$  & 4                  \\
1     & 0     & 1     & 0     & 1     & 0     & $\mathrm{II}^\ast$  & 4                  \\
1     & 0     & 1     & 0     & 1     & 1     & good                & 0                  \\
1     & 0     & 1     & 1     & -     & -     & $\mathrm{I}_0^\ast$ & 8                  \\
1     & 1     & -     & -     & -     & -     & $\mathrm{II}$       & 12                 \\ \bottomrule
\end{tabular}
\caption{Reduction types and conductor exponents of the elliptic curve
  $E_d$, in terms of the power series expansion of $d$
  (cf.~\eqref{eq:expansion-of-d}).  Here - means that the
  corresponding value is irrelevant.}
\label{table:1}
\end{table}
\end{lemma}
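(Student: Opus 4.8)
The plan is to run Tate's algorithm at the unique prime $\pi = 1+i$ lying above $2$, organising the computation by the value of $v_\pi(d)$. I first record the Weierstrass invariants of the model \eqref{eq:Weq}: from $a_1 = a_2 = a_3 = a_6 = 0$ and $a_4 = -d$ one gets $b_2 = b_6 = 0$, $b_4 = -2d$, $b_8 = -d^2$, $c_4 = 48d$, $c_6 = 0$, and $\Delta = 2^6 d^3$. The feature of $K_\pi$ that drives everything is $v_\pi(2) = 2$ (as $\pi^2 = 2i$) together with $v_\pi(i) = 0$; hence $v_\pi(\Delta) = 12 + 3\,v_\pi(d)$, and the fourth-power-free hypothesis forces $v_\pi(d) \le 3$. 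The leading digit $d_0$ of the expansion \eqref{eq:expansion-of-d} thus records whether $\pi \mid d$, and the rows of Table \ref{table:1} are indexed by a binary decision tree on the successive digits $d_0, d_1, \dots, d_5$ — which is precisely the branching structure of the algorithm.

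For the rows with $d_0 = 0$ (equivalently $v_\pi(d) \in \{1,2,3\}$) the reduction is additive and the Kodaira type is pinned down quickly: the leading digit fixes $v_\pi(\Delta) \in \{15,18,21\}$, and the algorithm returns type $\mathrm{III}$ when $v_\pi(d)=1$, types $\mathrm{I}_4^\ast$ or $\mathrm{I}_2^\ast$ when $v_\pi(d)=2$ (the split being governed by $d_3$), and type $\mathrm{III}^\ast$ when $v_\pi(d)=3$. In each of these cases one verifies, as part of the algorithm, that \eqref{eq:Weq} is already minimal at $\pi$: the rescaling $x \mapsto \pi^2 x,\ y \mapsto \pi^3 y$ sends $d \mapsto \pi^{-4}d$, which leaves $O$ as soon as $v_\pi(d) < 4$, and no translation improves this. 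The wild ramification at $\pi$ then appears as the gap between $v_\pi(\Delta_{\min})$ and the tame value for the same symbol; note in particular that the even case $v_\pi(d)=2$ already changes the Kodaira symbol itself from the tame $\mathrm{I}_0^\ast$ to $\mathrm{I}_2^\ast$ or $\mathrm{I}_4^\ast$, reflecting extra components produced by wild ramification.

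The genuinely delicate regime is $d_0 = 1$, i.e. $v_\pi(d)=0$, where the curve ranges from good reduction to $\mathrm{II}^\ast$. The obstruction is that the usual opening move of Tate's algorithm, completing the square in $y$, is unavailable because $2$ is not a unit at $\pi$: the reduced equation $y^2 = x^3 + x$ over $\bF_2$ is singular at $(1,0)$ for the trivial reason that $\partial_y(y^2 - x^3 + x) = 2y \equiv 0$. Consequently good reduction can be exhibited only after a translation $x \mapsto x+r,\ y \mapsto y + sx + t$ producing a nonzero $a_1$ or $a_3$ in the reduced model, and whether such an improvement succeeds is controlled by congruences on $d$ modulo successive powers of $\pi$, that is, by the higher digits. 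Tracking the valuations of the transformed $b_i$ through each admissible translation reproduces exactly the branching of the table: $d_1=1$ gives type $\mathrm{II}$; $d_1 = 0$ then splits on $d_2, d_3$ into $\mathrm{I}_2^\ast$ and $\mathrm{I}_0^\ast$; and when $d_2 = 1,\ d_3 = 0$ the algorithm must descend two further digits, the parity condition $d_4 = d_5$ separating good reduction from $\mathrm{II}^\ast$. The crucial point, special to residual characteristic two, is that in this last branch the model \eqref{eq:Weq} is minimal (with $v_\pi(\Delta_{\min}) = 12$) exactly when $d_4 \ne d_5$, and is \emph{non}-minimal, descending to $v_\pi(\Delta_{\min}) = 0$, exactly when $d_4 = d_5$; so the algorithm cannot even decide minimality until it has read $d_5$.

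Once the Kodaira type and the minimal discriminant are known, the conductor exponents in the final column follow from Ogg's formula $f = v_\pi(\Delta_{\min}) - m + 1$, where $m$ is the number of irreducible components of the special fibre of the minimal regular model (one for good reduction and $\mathrm{II}$; two for $\mathrm{III}$; five for $\mathrm{I}_0^\ast$; $5+n$ for $\mathrm{I}_n^\ast$; eight for $\mathrm{III}^\ast$; nine for $\mathrm{II}^\ast$); its validity in residual characteristic two, due to Saito, is what lets us avoid any separate wild-ramification computation (see \cite{advAEC}). For all the $d_0 = 1$ rows other than good reduction the model is already minimal with $v_\pi(\Delta_{\min}) = 12$, so the formula collapses to the clean relation $f = 13 - m$, returning $12, 8, 6, 4$ for $\mathrm{II}, \mathrm{I}_0^\ast, \mathrm{I}_2^\ast, \mathrm{II}^\ast$ respectively; the two $\mathrm{I}_2^\ast$ rows illustrate the mechanism, giving $f = 12 - 7 + 1 = 6$ or $f = 18 - 7 + 1 = 12$ according to whether $v_\pi(\Delta_{\min})$ is $12$ or $18$. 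The main obstacle throughout is the $d_0 = 1$ analysis: keeping track, in characteristic two, of which translations are admissible and how deep the digit expansion of $d$ must be read before the algorithm terminates and minimality can be decided.
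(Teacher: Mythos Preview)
Your proposal is correct and follows exactly the approach the paper intends: the paper gives no proof environment for this lemma at all, merely remarking beforehand that it ``can be easily verified by using Tate's algorithm (e.g.~see \cite[\S IV.9]{advAEC}).'' Your outline is therefore a substantial expansion of that one-line reference---the computation of the standard invariants, the organisation by $v_\pi(d)$, the handling of the wild case $d_0=1$ via translations, and the closing appeal to Ogg--Saito are all the expected steps, and the numerical checks against the table (e.g.\ $f = v_\pi(\Delta_{\min}) - m + 1$) are consistent.
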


\subsection{Hecke characters associated with CM elliptic curves and
  their root numbers}
\label{sec:hecke-char-assoc}

For a non-zero ideal $\mathfrak{f}$ of $O$, we define:
\begin{align*}
  K(\mathfrak{f}) &= \{ \alpha \in K^{\times} : (\alpha O,
                    \mathfrak{f}) = 1 \}, \\
  K_{\mathfrak{f}} &= 1 + K(\mathfrak{f})\mathfrak{f} = \{ \alpha \in
                     K^{\times} : \alpha \equiv 1
                     \operatorname{mod}^{\times} \mathfrak{f} \}
                     \subset K(\mathfrak{f}), \\
  I(\mathfrak{f}) &= \{ \text{fractional ideals of $K$ coprime to
                    $\mathfrak{f}$} \}, \\
  P(\mathfrak{f}) &= \{ \text{principal fractional ideals
                    $\alpha O$ of $K$ coprime to
                    $\mathfrak{f}$} \}, \\
  P_{\mathfrak{f}} &= \{ \text{principal fractional ideals
                     $\alpha O$ of $K$ where $\alpha \equiv 1
                     \operatorname{mod}^{\times} \mathfrak{f}$ }\}.
\end{align*}

Let $E = E_d$ be an elliptic curve defined over $K$ with the same
assumptions as in Subsection \ref{sec:cm-elliptic-curves}.  From the
theory of complex multiplication of elliptic curves, there is a Hecke
character
$\chi_{\mathbf{A}}: \mathbf{A}_K^{\times}/K^{\times} \to \mathbf{C}^{\times}$
where $\mathbf{A}^{\times}_K$ is the idèle group of $K$.  In terms of
ideal groups, this can be viewed as a pair $(\chi, \chi_{\infty})$,
where $\chi: I(\mathfrak{f}) \to \mathbf{C}^{\times}$ is a character
from the ideal group co-prime to a non-zero ideal $\mathfrak{f}$ of $O$,
and
$\chi_{\infty}:(\mathbf{R} \otimes_{\mathbf{Q}} K)^{\times}=
\mathbf{C}^{\times} \to \mathbf{C}^{\times}$ is a continuous
character.  Here the two characters $\chi$ and $\chi_{\infty}$ are
related in the following way: for $\alpha \in K_{\mathfrak{f}}$, we
have $\chi(\alpha O) = \chi_{\infty}^{-1} (1 \otimes \alpha)$.
Furthermore, we have more refined relation giving the values of the
character at $\alpha \in K(\mathfrak{f})$: we have a character
$\epsilon: (O/\mathfrak{f})^{\times} \cong
K(\mathfrak{f})/K_{\mathfrak{f}} \to S^1 \subset \mathbf{C}^{\times}$
such that
$\chi(\alpha O) = \epsilon(\alpha K_{\mathfrak{f}})
\chi_{\infty}^{-1}(1 \otimes \alpha)$ for each
$\alpha \in K(\mathfrak{f})$.  Such a Hecke character
$\chi$ is now called a \emph{Hecke character with conductor
  $\mathfrak{f}$, $(O/\mathfrak{f})^{\times}$-type $\epsilon$ and
  infinity-type $\chi_{\infty}$}.  The non-zero ideal $\mathfrak{f}$ is
called the \emph{conductor} of the Hecke character $\chi$ and it is
known that it has the same prime factors as the conductor of the curve
$E$.

\section{Generalities of root numbers of Hecke characters associated
  with CM elliptic curves}
\label{sec:gener-root-numb}

\subsection{Explicit computation of Hecke characters}
\label{sec:expl-comp-hecke}

From \S\ref{sec:preliminaries}, we see that there is an element
$d \in O$ unique up to multiplication by a fourth-power in $O$ such
that $E$ is isomorphic over $K$
to $E_d$ (cf.~Equation \eqref{eq:Weq}).  We
choose $d$ itself to be fourth-power-free, i.e.~in the unique primary
prime decomposition \eqref{eq:primary-prime-decomposition} of $d$, we
have $0 \le n_u, n_2, n_{\pi} \le 3$ for each odd prime $\pi$ of $K$.
This means that Equation \eqref{eq:Weq} defining $E_d$ is minimal at
each prime except $(1+ i)O$.  In particular, the curve
$E_d$ has good reduction modulo any \emph{odd} prime not dividing $d$.

Let $\chi: I(\mathfrak{f}) \to \mathbf{C}^{\times}$ be the Hecke
character associated with the curve $E_d$.  Here $\mathfrak{f}$ is an
integral ideal of $O$ having the same prime ideal divisors as the
conductor of the curve $E_d$.

\begin{lemma}\label{lem:point-counting}
  Let $\mathfrak{p}$ be a prime ideal of $K$ such that $\mathfrak{p}
  \nmid 2\mathfrak{f}$ with residue field $k = k_{\mathfrak{p}}$.  If
  $E = E_d$ is the elliptic curve defined by the Weierstrass equation
  \eqref{eq:Weq}, then concerning the number of $k$-rational points on
  the reduction $\tilde{E}$ of $E$ modulo $\mathfrak{p}$, we have the
  following results.
  \begin{itemize}
  \item If $\mathfrak{p}$ is of degree one, i.e.~$k = \mathbf{F}_p$
    for some rational prime $p \equiv 1 \pmod{4}$, we have
    \begin{equation*}
      \# \tilde{E}(k) = p + 1 - \overline{\quasym{d}{\pi}} \pi -
      \quasym{d}{\pi} \overline{\pi},
    \end{equation*}
    where $p = \pi \overline{\pi}$ is the decomposition of $p$ in $K$
    with primary prime elements $\pi$ and $\overline{\pi}$.
  \item If $\mathfrak{p}$ is of degree two, i.e.~$k$ is a degree two
    extension of $\mathbf{F}_p$ for some rational prime
    $p \equiv 3 \pmod{4}$, we have
    \begin{equation*}
      \# \tilde{E}(k) =
      \begin{cases}
        p^2 + 1 & \text{$d$ is not a square in $k$,} \\
        p^2 + 1 - 2p & \text{$d$ is a square but not a fourth power in
          $k$,} \\
        p^2 + 1 + 2p & \text{$d$ is a fourth power in $k$.}
      \end{cases}
    \end{equation*}
  \end{itemize}
\end{lemma}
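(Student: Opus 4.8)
The plan is to compute the trace of geometric Frobenius on the reduction and convert it to a point count via $\#\tilde{E}(k) = \#k + 1 - a_{\mathfrak p}$, where $a_{\mathfrak p} = \operatorname{tr}\Frob_{\tilde E}$ (the trace on $\ell$-adic cohomology, equivalently the reduced trace of $\Frob_{\tilde E}$ in $\End(\tilde E \otimes \bar k)\otimes\bQ$). First I would record that $\mathfrak p \nmid d$: an odd prime dividing $d$ forces the equation $y^2 = x^3 - dx$ to acquire a cusp modulo it, hence additive reduction, hence a prime factor of the conductor, contradicting $\mathfrak p \nmid 2\mathfrak f$. Thus $\tilde E = \tilde E_{\bar d}\colon y^2 = x^3 - \bar d x$ is smooth over $k = O/\mathfrak p$ with $\bar d \in k^{\times}$ the image of $d$.

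The key observation is that $\tilde E_{\bar d}$ is a quartic twist of $\tilde E_1 \colon y^2 = x^3 - x$ over $k$. Since $j = 1728$, we have $\Aut(\tilde E_1 \otimes_k \bar k) = \mu_4$, and since $4 \mid \#k - 1$ in both cases ($p \equiv 1 \pmod 4$ when $\mathfrak p$ has degree one, $4 \mid p^2 - 1$ always when it has degree two), Kummer theory identifies the $k$-forms of $\tilde E_1$ with $H^1(G_k, \mu_4) = k^{\times}/(k^{\times})^4$, sending $\tilde E_{\bar d}$ to the class of $\bar d$. Evaluating the twisting cocycle on the Frobenius substitution $\mathrm{Fr} \in G_k$ (if $u^4 = \bar d$ its value is $u^{\mathrm{Fr}}/u = u^{\#k-1} = \bar d^{(\#k-1)/4}$) yields $\Frob_{\tilde E_{\bar d}} = \xi \cdot \Frob_{\tilde E_1}$ in $\End(\tilde E_1 \otimes_k \bar k)\otimes\bQ$, where $\xi = \bar d^{(\#k-1)/4} \in \mu_4$ is the quartic power-residue symbol of $\bar d$ in $k$.

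In the degree one case $\mathfrak p = \pi O$ with $\pi$ primary, $p = \pi\bar\pi$: here $\tilde E_1/\bF_p$ is ordinary (as $p$ splits in $K$), $\End(\tilde E_1 \otimes \overline{\bF}_p) = O$, and the classical point count for $y^2 = x^3 - x$ gives $\Frob_{\tilde E_1} = \pi$ with the normalization forced by the fixed embedding $K \hookrightarrow \bC$. The reduction map $O \to \bF_p$ is injective on $\mu_4$, so by the definition of the quartic residue symbol $\xi$ is identified with $\quasym{d}{\pi} \in O^{\times}$ (up to conjugation, see below), and taking traces of $\xi\cdot\pi$ produces $a_{\mathfrak p} = \overline{\quasym{d}{\pi}}\,\pi + \quasym{d}{\pi}\,\overline{\pi}$, as claimed. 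In the degree two case $\mathfrak p = pO$ with $p \equiv 3 \pmod 4$, $k = \bF_{p^2}$: now $\tilde E_1$ is the base change to $\bF_{p^2}$ of the supersingular curve $(y^2 = x^3 - x)/\bF_p$ (supersingular because $p$ is inert in $K$), whose Frobenius over $\bF_p$ satisfies $\phi_p^2 = [-p]$, so $\Frob_{\tilde E_1/\bF_{p^2}} = [-p]$. Hence $a_{\mathfrak p} = \operatorname{tr}([-p]\xi) = -p\operatorname{tr}(\xi)$, and one finishes by the trichotomy on $\xi \in \mu_4$: $\bar d$ is a fourth power in $k$ iff $\xi = 1$ (giving $a_{\mathfrak p} = -2p$), a square but not a fourth power iff $\xi = -1$ (giving $a_{\mathfrak p} = 2p$), and a non-square iff $\xi \in \{i,-i\}$, in which case $\operatorname{tr}(\xi) = 0$ since $i$ has reduced trace zero (giving $a_{\mathfrak p} = 0$); substituting into $\#\tilde E(k) = p^2 + 1 - a_{\mathfrak p}$ yields the three stated values.

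The main obstacle is not the geometry but the normalization bookkeeping in the degree one case: one must match the cocycle unit $\xi$ with the quartic residue symbol $\quasym{d}{\pi}$ and pin down the correct associate and complex conjugate of $\pi$ representing $\Frob_{\tilde E_1}$, compatibly with the chosen embedding $K \hookrightarrow \bC$ and the identification $\Aut(\tilde E_1 \otimes \bar k) \cong \mu_4$; getting a conjugation wrong flips $\quasym{d}{\pi}$ to $\overline{\quasym{d}{\pi}}$ or $\pi$ to $\bar\pi$. A safe alternative is to invoke the explicit Jacobi-sum evaluation already available in the literature (cf.~\cite{RS}) for the degree one formula, and retain only the short supersingular twist argument above for degree two.
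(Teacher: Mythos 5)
Your argument is correct in substance, but it takes a genuinely different route from the paper: the paper disposes of this lemma in one line by citing the Jacobi-sum computation of Ireland--Rosen (\S 18.4, Theorem 5 of \cite{IR}) for $y^2 = x^3 - Dx$ over $\mathbf{Q}$ and asserting that the same character-sum argument goes through over $K$, whereas you reorganise the proof around the fact that $\tilde E_{\bar d}$ is the quartic twist of $\tilde E_1 : y^2 = x^3 - x$ by the class of $\bar d$ in $k^\times/(k^\times)^4$, so that $\Frob_{\tilde E_{\bar d}}$ differs from $\Frob_{\tilde E_1}$ by the unit $\bar d^{(\#k-1)/4} \in \mu_4$. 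Your approach buys a uniform treatment of the two cases, makes the appearance of the quartic residue symbol structurally transparent, and explains painlessly why the statement extends from $d \in \mathbf{Z}$ to $d \in O$ (the twisting class only sees $\bar d \in k^\times$); the degree-two case in particular becomes a clean supersingularity argument ($\phi_p^2 = [-p]$) that is immune to normalisation issues because the answer depends only on $\operatorname{tr}(\xi)$. The costs are the two points you already flag: (i) the degree-one case still rests on Gauss's evaluation $\Frob_{\tilde E_1} = \pi$ with $\pi$ primary, which is itself proved by the very Jacobi-sum computation the paper cites, so you have isolated rather than eliminated that input; and (ii) the identification of the cocycle value with $\overline{\quasym{d}{\pi}}$ rather than $\quasym{d}{\pi}$ (equivalently, which of $\pi$, $\overline{\pi}$ represents Frobenius under the chosen embedding $K \hookrightarrow \mathbf{C}$) must be pinned down, since a wrong convention conjugates the answer. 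Neither is a gap --- the first is a legitimate citation and the second is a finite check, e.g.\ against $d=1$ or one small prime --- so your proof is acceptable as a more self-contained alternative to the paper's reference.
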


\begin{proof}
  Over the rationals $\mathbf{Q}$, it is proved in \cite[\S 18.4,
  Theorem 5]{IR}.  We can prove the result similarly over $K$.
\end{proof}

\begin{proposition}
  \label{prop:char-value}
  The Hecke character $\chi: I(\mathfrak{f}) \to \mathbf{C}^{\times}$
  associated with the curve $E_d$ defined by the equation
  \eqref{eq:Weq} is given by, for any prime ideal $\mathfrak{p}$ of
  $K$ co-prime to $2\mathfrak{f}$,
  \begin{equation*}
    \chi(\mathfrak{p}) = \overline{\quasym{d}{\mathfrak{p}}} \pi,
  \end{equation*}
  where $\pi$ is the unique primary generator of $\mathfrak{p}$.
\end{proposition}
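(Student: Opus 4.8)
The plan is to pin down $\chi$ through the finite-order character $\epsilon$ it carries on $(O/\mathfrak{f})^{\times}$. By the main theorem of complex multiplication, applied to the setup recalled in Subsection~\ref{sec:hecke-char-assoc}, $\chi$ has conductor dividing $\mathfrak{f}$, its infinity-type is such that $\chi(\mathfrak{p}) = \epsilon(\pi)\pi$ for every prime $\mathfrak{p}\nmid 2\mathfrak{f}$ with primary generator $\pi$, and its defining property is that $\chi(\mathfrak{p})$ generates $\mathfrak{p}$ with $\#\widetilde{E}(k_{\mathfrak{p}}) = N\mathfrak{p} + 1 - \operatorname{Tr}_{K/\mathbf{Q}}\chi(\mathfrak{p})$. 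Since a primary generator is determined by its ideal, the assertion is equivalent to the identity $\epsilon(\pi \bmod \mathfrak{f}) = \overline{\quasym{d}{\pi}}$ for every primary prime $\pi$ coprime to $2\mathfrak{f}$.

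The first step is to check that $\beta \mapsto \overline{\quasym{d}{\beta}}$ is well defined on residue classes modulo $\mathfrak{f}$ of primary elements. Writing $d$ in its primary prime decomposition \eqref{eq:primary-prime-decomposition}, one has $\quasym{d}{\beta} = \quasym{i}{\beta}^{n_u}\quasym{1+i}{\beta}^{n_2}\prod_{\ell}\quasym{\pi_{\ell}}{\beta}^{n_{\pi_{\ell}}}$ by Proposition~\ref{prop:QRS-properties}(1); here each $\quasym{\pi_{\ell}}{\beta}$ is turned into $\pm\quasym{\beta}{\pi_{\ell}}$ by quartic reciprocity \eqref{eq:main-reciprocity-2}, depending only on $\beta$ modulo $\pi_{\ell}$, while $\quasym{i}{\beta}$ and $\quasym{1+i}{\beta}$ depend only on $\beta$ modulo a fixed small power of $(1+i)$ by \eqref{eq:reciprocity-units-and-evens}. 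The odd primes $\pi_{\ell}$ divide $\mathfrak{f}$, as they divide the conductor of $E_d$; and whenever $n_u$ or $n_2$ is nonzero, Table~\ref{table:1} of Lemma~\ref{Tate} shows that $E_d$ has additive reduction at $(1+i)$ with a conductor exponent large enough to absorb the power of $(1+i)$ needed above, while if $n_u=n_2=0$ the factors $\quasym{i}{\beta},\quasym{1+i}{\beta}$ do not occur at all. Hence the relevant modulus divides $\mathfrak{f}$.

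The second step is to read $\epsilon$ off from Lemma~\ref{lem:point-counting}. Fix the residue class $\beta$ of a primary prime modulo $\mathfrak{f}$. Since $K$ has class number one, the primes $\mathfrak{p}=(\pi)$ whose primary generator is $\equiv\beta\pmod{\mathfrak{f}}$ are precisely the members of one ray class modulo $\mathfrak{f}$, which has positive Dirichlet density; the degree-two primes contribute only the convergent tail $\sum_{q\equiv 3\,(4)}q^{-2s}$ and hence have density zero, so infinitely many of these $\mathfrak{p}$ are of degree one. For each such $\mathfrak{p}=(\pi)$ the first bullet of Lemma~\ref{lem:point-counting} and the trace relation above give
\begin{equation*}
  \epsilon(\beta)\pi + \overline{\epsilon(\beta)}\,\overline{\pi}
  = \operatorname{Tr}_{K/\mathbf{Q}}\chi(\mathfrak{p})
  = \overline{\quasym{d}{\pi}}\,\pi + \quasym{d}{\pi}\,\overline{\pi},
\end{equation*}
so $\bigl(\epsilon(\beta) - \overline{\quasym{d}{\pi}}\bigr)\pi$ is purely imaginary; as these $\pi$ do not all lie on one line through the origin in $\mathbf{C}$ and, by the first step, $\overline{\quasym{d}{\pi}}$ depends only on $\beta$, we conclude $\epsilon(\beta) = \overline{\quasym{d}{\pi}}$ for that class. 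Running over all residue classes of primary primes yields $\epsilon(\pi\bmod\mathfrak{f}) = \overline{\quasym{d}{\pi}}$ for every primary prime $\pi\nmid 2\mathfrak{f}$ — of degree one or two alike — whence $\chi(\mathfrak{p}) = \epsilon(\pi)\pi = \overline{\quasym{d}{\mathfrak{p}}}\,\pi$, as claimed.

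I expect the first step to be the only genuine obstacle: it requires matching the small power of $(1+i)$ forced by \eqref{eq:reciprocity-units-and-evens} against the conductor exponent of $E_d$ at $(1+i)$ tabulated in Lemma~\ref{Tate}, case by case on the leading $(1+i)$-adic digits of $d$, and keeping the unit normalisations straight — in particular it matters that $\pi$ is taken primary, so that the value $\epsilon(u)=u^{-1}$ on units $u\in O^{\times}$ never intervenes in the final formula. The density input in the second step and the observation that $\pi$ ranges over a non-degenerate set of directions are routine.
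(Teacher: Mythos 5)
Your overall strategy is reasonable and, for degree-one primes, essentially forced: Deuring's theorem gives $\chi(\mathfrak{p}) = u\pi$ for some unit $u$, and the trace identity from Lemma \ref{lem:point-counting} pins $u$ down. (In fact for a degree-one prime you need no variation of $\pi$ at all: if $u \neq \overline{\quasym{d}{\pi}}$, then $u - \overline{\quasym{d}{\pi}}$ is an associate of $2$ or of $1+i$, and $\Re\bigl((u - \overline{\quasym{d}{\pi}})\pi\bigr) = 0$ would force $\pi$ to be even or real, which is impossible for an odd prime of degree one.) The variation over a fixed residue class is genuinely needed only for degree-two primes, where the trace cannot separate $i$ from $-i$; there your argument rests entirely on your first step. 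Note that the paper's own proof is a one-line citation of \cite[Example II.10.6]{advAEC}, so you are supplying an argument the paper outsources.

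The gap is in your first step, and it is not merely a deferred verification: the justification you give is false in a family of cases. You claim that whenever $n_u$ or $n_2$ is nonzero, Table \ref{table:1} forces additive reduction at $(1+i)$ with a conductor exponent large enough to control the needed power of $(1+i)$. But good reduction at $(1+i)$ (conductor exponent $0$, so $(1+i)\nmid\mathfrak{f}$) occurs exactly when $d \equiv 1+2i \pmod{(1+i)^4}$, which forces $n_u = 2$ and forces the primary part of $d$ to contain an odd number of prime factors $\equiv 3+2i \pmod 4$ counted with multiplicity. In that situation $\mathfrak{f}$ is odd, yet $\quasym{i}{\beta}^{2} = (-1)^{(a-1)/2}$ and each reciprocity sign from \eqref{eq:main-reciprocity-2} depend on $\beta$ modulo $(1+i)^4$, which is not determined by $\beta \bmod \mathfrak{f}$; the same problem occurs for type $\mathrm{II}^\ast$, where $a(\chi^v)=2$ and $(1+i)^2O = 2O$ still does not determine $\beta$ modulo $4$. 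The map $\beta \mapsto \overline{\quasym{d}{\beta}}$ is in fact well defined on primary classes mod $\mathfrak{f}$ in these cases, but only because the sign coming from $\quasym{i}{\beta}^{2}$ exactly cancels the product of the reciprocity signs over the (odd number of) offending prime factors --- a cancellation your argument neither identifies nor proves, and without which your determination of $\epsilon$ on a residue class, hence the degree-two half of the proposition, does not follow. A complete proof along your lines must carry out this bookkeeping explicitly, case by case against Table \ref{table:1} and \eqref{eq:reciprocity-units-and-evens}.
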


\begin{proof}
  This is due to Lemma \ref{lem:point-counting}, as explained in
  \cite[Example II.10.6]{advAEC}.
\end{proof}

From this, we can identify the infinity type of the Hecke character
$\chi_{\infty}$.

\begin{lemma}
  Let $E = E_d$ be an elliptic curve over $K$ with the same
  assumptions as in Subsection \ref{sec:cm-elliptic-curves}.  Then the
  infinity type
  $\chi_{\infty}: \mathbf{C}^{\times} \to \mathbf{C}^{\times}$ of the
  corresponding Hecke character $\chi$ is given by
  $z \mapsto z^{-1}$.
\end{lemma}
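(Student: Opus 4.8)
The plan is to determine the infinity type directly from its defining compatibility with the finite part $\chi$, which was computed in Proposition~\ref{prop:char-value}. Recall that for $\alpha \in K_{\mathfrak{f}}$ we have $\chi(\alpha O) = \chi_\infty^{-1}(1 \otimes \alpha)$, and more generally $\chi(\alpha O) = \epsilon(\alpha K_{\mathfrak{f}}) \chi_\infty^{-1}(1 \otimes \alpha)$ for $\alpha \in K(\mathfrak{f})$. So the strategy is: pick $\alpha \in K(\mathfrak{f})$ generating a prime ideal $\mathfrak{p}$ coprime to $2\mathfrak{f}$, compare the two expressions for $\chi(\mathfrak{p})$, and read off $\chi_\infty$.

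First I would take a primary prime element $\pi$ with $\pi O = \mathfrak{p}$ coprime to $2\mathfrak{f}$. Proposition~\ref{prop:char-value} gives $\chi(\mathfrak{p}) = \overline{\quasym{d}{\mathfrak{p}}}\,\pi$. On the other hand, writing $\alpha = \pi$ (which lies in $K(\mathfrak{f})$ since $\pi$ is coprime to $\mathfrak{f}$), the refined relation gives $\chi(\pi O) = \epsilon(\pi K_{\mathfrak{f}})\,\chi_\infty^{-1}(1 \otimes \pi)$. Under the embedding $K \hookrightarrow \mathbf{C}$ fixed at the start, $1 \otimes \pi$ is just $\pi \in \mathbf{C}^\times$. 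Comparing, we get $\chi_\infty^{-1}(\pi) = \epsilon(\pi)^{-1}\,\overline{\quasym{d}{\mathfrak{p}}}\,\pi$. Since $\epsilon$ and $\overline{\quasym{d}{\mathfrak{p}}}$ are both roots of unity (values in $S^1$, in fact in $\mu_4$), the only part of $\chi_\infty^{-1}(\pi)$ that is not a root of unity is the factor $\pi$ itself. A continuous homomorphism $\mathbf{C}^\times \to \mathbf{C}^\times$ that agrees with $z \mapsto z$ up to a torsion character on a Zariski-dense (indeed, just an infinite) set of Gaussian primes must be $z \mapsto z$, because such algebraic Hecke characters have infinity type of the form $z \mapsto z^a \bar z^b$ with $a, b \in \mathbf{Z}$; matching the archimedean absolute value $|\chi(\mathfrak{p})| = |\pi| = N\mathfrak{p}^{1/2}$ forces $a + b = 1$, and matching the actual complex values (not just absolute values) on primes forces $a = 1$, $b = 0$. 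Hence $\chi_\infty^{-1}(z) = z$, i.e. $\chi_\infty(z) = z^{-1}$.

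Alternatively, and perhaps more cleanly, I would invoke the general structure theorem for Hecke characters attached to CM elliptic curves: the associated Hecke character has infinity type equal to the map induced by the CM embedding $O \hookrightarrow \mathbf{C}$, which for the fixed embedding $K \hookrightarrow \mathbf{C}$ is $z \mapsto z$ on ideals and hence $z \mapsto z^{-1}$ as the character $\chi_\infty$ in the normalization used here (see \cite[Chapter II]{advAEC}). The computation in Proposition~\ref{prop:char-value}, where $\chi(\mathfrak{p}) = \overline{\quasym{d}{\mathfrak{p}}}\pi$ with $\pi$ the primary generator, already exhibits this: the ``algebraic'' part of the value is linear in $\pi$, not in $\bar\pi$, and not of higher degree.

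The main obstacle is not any hard estimate but rather being careful about normalization conventions: whether $\chi_\infty$ is defined on $(\mathbf{R} \otimes_{\mathbf{Q}} K)^\times = \mathbf{C}^\times$ via the fixed embedding or its conjugate, and the inverse in the relation $\chi(\alpha O) = \chi_\infty^{-1}(1 \otimes \alpha)$. One must check that with the conventions set up in Subsection~\ref{sec:hecke-char-assoc} the value $\chi_\infty(z) = z^{-1}$ is consistent with $\chi(\mathfrak{p}) = \overline{\quasym{d}{\mathfrak{p}}}\pi$ rather than, say, $z \mapsto \bar z^{-1}$; this is exactly pinned down by the fact that $\pi$ (and not $\bar\pi$) appears in Proposition~\ref{prop:char-value}, together with our fixed choice of embedding $K \hookrightarrow \mathbf{C}$.
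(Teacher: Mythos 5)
Your proof is correct and rests on the same core mechanism as the paper's: evaluate the compatibility $\chi(\alpha O) = \epsilon(\alpha)\,\chi_{\infty}^{-1}(1 \otimes \alpha)$ at a prime where Proposition \ref{prop:char-value} gives $\chi(\mathfrak{p}) = \overline{\quasym{d}{\mathfrak{p}}}\,\pi$, and read off the infinity type. Where you diverge is in how the root-of-unity ambiguity is removed. The paper first cites the weight-one property to reduce to the two candidates $z \mapsto z^{-1}$ and $z \mapsto \bar z^{-1}$, and then uses Lemma \ref{Dirichlet} to construct a special prime element $\alpha \equiv 1$ modulo $\mathfrak{f}$ and modulo a high power of $(1+i)$, so that $\alpha O \in P_{\mathfrak{f}}$ (hence no $\epsilon$-factor) and $\overline{\quasym{d}{\alpha O}} = 1$; this yields the clean identity $\chi_{\infty}^{-1}(\alpha) = \alpha$ and immediately settles the dichotomy. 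You instead work with an arbitrary primary prime $\pi$ coprime to $2\mathfrak{f}$, accept $\chi_{\infty}^{-1}(\pi) = \zeta_{\pi}\,\pi$ with $\zeta_{\pi}$ a root of unity of bounded order (bounded by $4\cdot|(O/\mathfrak{f})^{\times}|$), and then use the rigidity of infinity types $z \mapsto z^{a}\bar z^{b}$: the absolute value forces $a + b = 1$ and the relation $\pi^{a-1}\bar\pi^{b} = \zeta_{\pi}$ forces $a = 1$. This buys you independence from the bespoke construction of $\alpha$ (and from quartic reciprocity at this point), at the cost of one step you should make explicit: why $(\pi/\bar\pi)^{a-1}$ being a root of unity forces $a = 1$. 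For a split (degree-one) prime with $\pi O \neq \bar\pi O$, the element $\pi/\bar\pi$ has infinite multiplicative order, since $(\pi/\bar\pi)^{n}$ a unit would give $(\pi O)^{n} = (\bar\pi O)^{n}$ and hence $\pi O = \bar\pi O$ by unique factorisation; so you must choose (or at least mention) a split prime rather than an arbitrary one, as an inert prime $-q$ is real and detects nothing. With that sentence added your first argument is complete; your alternative appeal to the general CM structure theorem is also valid, and is essentially the standard reference the paper's Proposition \ref{prop:char-value} itself relies on.
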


\begin{proof}
  Since the Hecke characters attached to CM elliptic curves are of
  weight 1 (cf.~\cite[\S 1.1]{Schap}), the infinity type must be
  either of the form $z \mapsto z^{-1}$ or of the form
  $z \mapsto \bar{z}^{-1}$.  In order to determine which one is the
  true infinity type for our case, we consider an element
  $\alpha \in O$ such that $\alpha \equiv 1 \pmod{\mathfrak{p}^n}$
  where $\mathfrak{p}$ is an odd prime with
  $\mathfrak{p}^n  \| \mathfrak{f}$, $n > 0$, that
  $\alpha \equiv 1 \pmod{(1+i)^m O}$ where $m$ is any integer
  $\ge \max(\ord_{(1+i)O} \mathfrak{f}, 3)$ and that $\alpha O$ is a
  prime ideal of $O$.  The existence of such an element is guaranteed
  by Lemma \ref{Dirichlet}.  Now, since
  $\alpha O \in P_{\mathfrak{f}}$ and since
  $\overline{(d/\alpha O)_4} = 1$ due to the conditions of $\alpha$,
  we see that $\chi_{\infty}(\alpha) = \alpha^{-1}$.  This implies
  that the infinity type is indeed $z \mapsto z^{-1}$.
\end{proof}

\subsection{Root numbers of Hecke characters}
\label{sec:root-numbers-hecke}

Let $E$ be an elliptic curve defined over $K$ given by the Weierstrass
equation \eqref{eq:Weq} with fourth-power-free $d \in O$.  The
\emph{(global) root number} of the associated Hecke character $\chi$
of $E$ can be represented by the product of local root numbers of
$\chi$.  In our case, for any place $v$ of $K$, we can compute the
local root number $w_v(\chi)$ as follows with three cases
(cf.~\cite[Lecture 2]{Rohr}, or \cite[\S\S 3--5]{Watkins}):
\begin{enumerate}
\item If $v$ is archimedean, in other words, since our $K$ is
  imaginary quadratic, if $v$ is the unique complex place of $K$, then
  $w_v(\chi) = -i$.
\item If $\chi$ is unramified at $v$, in other words, if $E$ has good
  reduction at $v$, then
  $w_v(\chi) = \chi^v(\mathfrak{d}_v) / |\chi^v(\mathfrak{d}_v)|$,
  where $\chi^v$ is the local component of $\chi_{\mathbf{A}}$,
  i.e.~$\chi^v = \chi_{\mathbf{A}} \circ \iota_v$, where
  $\iota_v: K_v^{\times} \to \mathbf{A}_K^{\times}/K^{\times}$ is
  induced by the natural inclusion and $\mathfrak{d}_v$ is the local
  different of the completion $K_v$ of $K$ with respect to $v$.  In
  particular, if moreover $v$ is odd, then $w_v(\chi) = +1$.  On the
  other hand, if $v$ is the unique place lying over the prime $2$,
  i.e.~the place corresponds to the prime ideal $(1+i)O$ of $K$ and if
  $E$ has good reduction at $v$, then
  $w_v(\chi) = \chi(\mathfrak{d}_v) = \chi((1+i)^3O)$.
\item Suppose $\chi$ is ramified at $v$, i.e.~$E$ has bad reduction at
  $v$.  Let $\chi^v_u = \chi^v / |\chi^v|$.  we have
  \begin{equation}
    \label{formula}
    w_v(\chi) = \chi_u^v(\beta) \cdot G(\chi^v),
  \end{equation}
  where $\beta \in K^{\times}$ is any element satisfying
  $v(\beta) = a(\chi^v) + n$.  Here $a(\chi^v)$ is the exponent of the
  conductor of $\chi^v$ (cf.~\cite{Rohr}, pp.~28--29), and $n$ is the
  valuation of the local different ideal of $K_v$.  Moreover,
  $G(\chi^v)$ is the normalised Gauss sum given by:
  \begin{equation*}
    G(\chi^v) = q^{-a(\chi^v)/2} \cdot \sum_{x \in
      (O_v/\mathfrak{f}(\chi^{v}))^{\times}} (\chi^v)^{-1}(x) \cdot e^{2\pi i
      \operatorname{tr}^{K_v}_{\mathbf{Q}_p} (x/\beta)},
  \end{equation*}
   where $q$ is the cardinality of the residue field $k_{v}$.
\end{enumerate}

\begin{remark}
  \begin{enumerate}
  \item The local character $\chi^v: K_v^{\times} \to
    \mathbf{C}^{\times}$ is ramified if and only if the original curve
    $E$ has bad reduction at $v$.  Moreover, the exponent of the conductor
    $a(\chi^v)$ of $\chi^v$ is exactly the half of the exponent of the
    conductor of $E$ at $v$ (cf.~\cite[Theorem 12]{ST}).  In
    particular, if $v$ is odd and $E$ has bad reduction at $v$, then
    $a(\chi^v) = 1$.
  \item We can compute the values of $\chi^v$ using \cite[Proposition
    2.1]{Rohr}.  More precisely, suppose that $\mathfrak{p}$ is a
    prime ideal of $K$ with corresponding finite place $v$ and $\pi$
    is a uniformiser for $v$.  Then we have the following.
    \begin{itemize}
    \item If $\mathfrak{p} \nmid \mathfrak{f}$, then $\chi^v(\pi) =
      \chi(\mathfrak{p})$.
    \item If $\mathfrak{p} \mid \mathfrak{f}$ then $\chi^v|O_v^{\times}
      = \epsilon_v^{-1}$.
    \item If $\beta \in O_K$ and $\beta O_K$ is a power of some prime
      ideal $\mathfrak{p}$ dividing $\mathfrak{f}$, then
      \begin{equation*}
        \chi^v(\beta) = \chi_{\infty}^{-1}(\beta) \cdot
        \prod_{\mathfrak{q} \mid \mathfrak{f},\,\,\mathfrak{q} \neq
          \mathfrak{p}} \epsilon_{\mathfrak{q}}(\beta).
      \end{equation*}
    \end{itemize}
  \end{enumerate}
\end{remark}

\section{Proofs of the main results}
\label{sec:proofs-main-results}

The following result is a generalisation of Dirichlet's theorem on
arithmetic progression over the Gaussian field $K$.

\begin{lemma} \label{Dirichlet} Let
  $\mathfrak{m}_1, \cdots, \mathfrak{m}_J$ be mutually relatively
  prime ideals of $O$ and $\alpha_j \in O$ for $j = 1, \cdots, J$ be
  given.  Then, there are infinitely many prime elements $x \in O$
  such that $x \equiv \alpha_j \pmod{\mathfrak{m}_j}$ for all
  $j = 1, \cdots, J$.
  % Let $\alpha_j$ be elements of $O_K$
  % and $\fm_j$ be ideals of $O_K$. Then, there are infinitely many
  % primary prime elements $x$ such that
  % $u x \equiv \alpha_j \pmod{\fm_j}$ for some unit
  % $u \in O^{\times}$.
\end{lemma}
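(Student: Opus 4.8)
The plan is to reduce this to the classical Dirichlet theorem on primes in arithmetic progressions, applied over $K = \mathbf{Q}(i)$. Since $O = \mathbf{Z}[i]$ has class number one and is a PID, the Chebotarev/Dirichlet density machinery for ray class groups is particularly clean here. First I would combine the finitely many congruence conditions into a single one: set $\mathfrak{m} = \prod_{j=1}^{J} \mathfrak{m}_j$. Because the $\mathfrak{m}_j$ are pairwise coprime, the Chinese Remainder Theorem gives a ring isomorphism $O/\mathfrak{m} \cong \prod_{j=1}^{J} O/\mathfrak{m}_j$, so there is a single residue class $\alpha \bmod \mathfrak{m}$ (pick any $\alpha \in O$ mapping to $(\alpha_1 \bmod \mathfrak{m}_1, \dots, \alpha_J \bmod \mathfrak{m}_J)$) such that the system $x \equiv \alpha_j \pmod{\mathfrak{m}_j}$ for all $j$ is equivalent to the single congruence $x \equiv \alpha \pmod{\mathfrak{m}}$.

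Next I would dispose of a degenerate case: if $\gcd(\alpha O, \mathfrak{m}) \ne O$, then any $x \equiv \alpha \pmod{\mathfrak{m}}$ shares that common factor, so unless that common factor is a unit times a prime and $x$ equals an associate of it, there are no prime solutions — so one must assume $(\alpha O, \mathfrak{m}) = O$, i.e. $\alpha$ is a unit modulo $\mathfrak{m}$. (The lemma as stated is presumably intended with this hypothesis implicit, since it is used in the paper only for such $\alpha$; I would state it explicitly.) Granting $(\alpha, \mathfrak{m}) = 1$, the class of $\alpha O$ in the narrow ray class group $\mathrm{Cl}_{\mathfrak{m}}(K)$ (or simply $(O/\mathfrak{m})^\times$ modulo the image of units, since $K$ has class number one) is well-defined, and the generalized Dirichlet theorem — equivalently Chebotarev for the ray class field of conductor $\mathfrak{m}$ — guarantees infinitely many prime ideals $\mathfrak{p}$ of $O$ with $\mathfrak{p}$ in that ray class. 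Each such $\mathfrak{p}$ is principal, $\mathfrak{p} = x O$, and its generators are $x, ix, -x, -ix$; among the four, at least one satisfies $x \equiv \alpha \pmod{\mathfrak{m}}$ provided the ray class was chosen to be the class of $\alpha$, which it was. This produces infinitely many prime elements $x \in O$ with $x \equiv \alpha \pmod{\mathfrak{m}}$, hence $x \equiv \alpha_j \pmod{\mathfrak{m}_j}$ for all $j$.

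The only genuine subtlety — and the step I would be most careful about — is the passage from ``infinitely many prime ideals in a given ray class'' to ``infinitely many prime \emph{elements} in a given residue class'': one must check that the chosen ray class modulo $\mathfrak{m}$ maps onto the desired class in $(O/\mathfrak{m})^\times$, rather than merely onto its orbit under the four units $\{1, -1, i, -i\}$. This is handled by working with the \emph{full} ray class group of modulus $\mathfrak{m}$ (no $\infty$-part is needed since $K$ is imaginary quadratic and has no real places), whose elements are exactly the classes $x O$ with $x$ ranging over residue classes in $(O/\mathfrak{m})^\times$ up to multiplication by units; selecting the class of $\alpha O$ and then adjusting the unit pins down the representative. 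Everything else is a direct citation of the generalized Dirichlet/Chebotarev theorem over number fields, so I would keep the write-up short, cite a standard reference (e.g. Neukirch or Lang's \emph{Algebraic Number Theory}), and spell out only the CRT reduction and the unit-adjustment.
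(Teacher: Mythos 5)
Your argument is correct, and in fact the paper offers no proof of this lemma at all --- it is stated as a standard generalisation of Dirichlet's theorem and used as a black box --- so there is nothing to compare against. Your route (CRT to collapse the system to a single congruence modulo $\mathfrak{m}=\prod_j\mathfrak{m}_j$, Dirichlet/Chebotarev for the ray class field of conductor $\mathfrak{m}$ to get infinitely many prime ideals in the class of $\alpha O$, then replacing a generator by a suitable associate to land in the exact residue class) is the standard one and is sound; you are also right that the coprimality of $\alpha_j$ with $\mathfrak{m}_j$ is an implicit hypothesis, and it does hold in every application the paper makes of the lemma. The one phrase I would tighten is the claim that the ``full'' ray class group resolves the unit ambiguity: for $K=\mathbf{Q}(i)$ the ray class group modulo $\mathfrak{m}$ is $(O/\mathfrak{m})^\times$ modulo the image of $O^\times$ no matter what, so the class of $\alpha O$ genuinely only determines $\alpha$ up to a unit --- the real fix is exactly your final observation that an associate of a prime element is again a prime element, so one of the four generators of each prime ideal in that class lies in the prescribed residue class.
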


% \begin{proof}
%   By Chinese remainder theorem, there is $y \in O_K$ such that
%   $y \equiv \alpha_j \pmod{\fm_j}$.  Let $\fm = \prod \fm_j$.  By
%   Dirichlet's theorem on arithmetic progression, there are infinitely
%   many prime ideal $\fp$ such that $\fp \sim_{\fm} (y)$, where we
%   write $\fa \sim_{\fm} \fb$ if $\fa$ and $\fb$ represent the same
%   class in the ray class group modulo $\fm$.  Since $O_K$ is a PID,
%   there is a primary prime element $x$ such that $(x) \sim_{\fm}
%   (y)$. By definition, it is equivalent to $ax = by$ for
%   $a, b \equiv 1 \pmod{\prod \fm_j}$, which is equivalent to
%   $x \equiv i^ky \pmod{\mathfrak{m}}$\footnote{We need to prove
%     this!!!!!!!!} for some $0 \leq k \leq 3$. By the construction, we
%   have $i^kx \equiv \alpha_j \pmod{\fm_j}$ for all $j$.
% \end{proof}

Let us consider the elliptic curve $E_d$ defined by the equation
\eqref{eq:Weq} with fourth-power-free $d \in O$ and the primary prime
decomposition (cf.~Subsection \ref{sec:gaussian-primes}) of $d$:
\begin{equation}
  \label{eq:d-expression}
  d = i^{n_u} \cdot (1 + i)^{n_2} \cdot \prod_{\pi:
    \text{degree 1}} \pi^{n_{\pi}} \cdot \prod_{-q: \text{degree 2}}
  (-q)^{n_q}.
\end{equation}
Here, we have separated prime elements of $K$ into two kinds: those
with absolute residual degree one or two.  Note also that the primary
generator of a prime of degree two is of the form $-q$ with $q$ being
a rational prime congruent to $3$ modulo $4$.  For primes of degree
one, we denote by $\pi$ its primary generator.  Also note the range of
the exponents $n_{\bullet}$ of the factors in \eqref{eq:d-expression}:
we have $0 \le n_{\bullet} < 4$ since $i^4 = 1$ and $d$ is
assumed to be fourth-power-free.

% In this section and thereafter, we will deal with root numbers of the
% Hecke character associated with the curve $E_d$ and the other curves
% $E_{\tilde{d}}$ where $\tilde{d}$ is obatined by slightly changing the
% exponents in \eqref{eq:d-expression}.  For this, we introduce a
% notation capturing the set of all such perturbed curves as follows.
% \begin{itemize}
% \item If $S$ is any set of fourth-power-free elements in $O$, denote
%   by $\mathcal{E}(S)$ the set of all elliptic curves $E_d$ with $d \in
%   S$.
% \item Let $v_1, \cdots, v_r$ be finite places, prime ideals, or
%   (primary) prime elements of $K$.  For a fixed fourth-power-free $d
%   \in O$, we define $\mathcal{E}(d; v_1, \cdots, v_r)$ to be the set
%   of elliptic curves $E_{\tilde{d}}$, where $\tilde{d}$ are
%   fourth-power-free integers in $O$ which are the same as $d$ except
%   possibly at the exponents in \eqref{eq:d-expression} of primes
%   corresponding to $v_i$'s.  In other words,
%   \begin{multline*}
%     \mathcal{E}(d; v_1, \cdots, v_r) = \{ E_{\tilde{d}}: \tilde{d} =
%     \prod_{v = v_1, \cdots, v_r} \pi_v^{n_v} \cdot d', \\
%     \text{where } 0 \le n_v < 4  \text{ and } d = d' \cdot \prod_{v =
%       v_1, \cdots, v_r} \pi_v^{m_v} \text{ with $d'$ being relatively
%       prime to $\pi_v$'s.} \}
%   \end{multline*}
% \item Similarly, for $\mathcal{E}(d; u, v_1, \cdots, v_r)$, we further
%   allow different exponents $0 \le n_u < 4$.
% \end{itemize}

\begin{lemma}
  \label{lem:epsilon-determined}
  Suppose that $v$ is a finite place of $K$ with corresponding prime
  ideal $\mathfrak{p}_v$ of $O$, and assume that
  $\mathfrak{p}_v \nmid 2$ and that $\mathfrak{p}_v$ is of degree one.
  Let $d$ be a fourth-power-free element in $O$.  Then the local
  epsilon type $\epsilon_v$ associated with $E_d$ at $v$ is completely
  determined by the value $v(d)$.  More precisely, $\epsilon$ is of
  exact order 4 (resp.~2, resp.~1) if and only if
  $v(d) \in \{ 1, 3 \}$ (resp.~$v(d) = 2$, resp.~$v(d) = 0$).
\end{lemma}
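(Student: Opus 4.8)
The plan is to read off the local $\epsilon$-type directly from the formula for the Hecke character in Proposition \ref{prop:char-value} together with the dictionary in the final remark of \S\ref{sec:root-numbers-hecke}, which says that for $\mathfrak{p}_v \mid \mathfrak{f}$ we have $\chi^v|_{O_v^\times} = \epsilon_v^{-1}$. Since $\mathfrak{p}_v$ is odd and of degree one, its residue field is $\mathbf{F}_p$ with $p\equiv 1\pmod 4$, and the conductor exponent of $E_d$ at $v$ is $2$ (bad reduction at an odd prime is always additive with conductor exponent $2$ here), so $a(\chi^v)=1$ and $\epsilon_v$ is a character of $(O/\mathfrak{p}_v)^\times \cong \mathbf{F}_p^\times$. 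In particular $\epsilon_v$ is tame, hence it is a power of the quartic residue character $\alpha\mapsto \overline{\quasym{\alpha}{\mathfrak{p}_v}}$, and its exact order is $1$, $2$, or $4$.

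The key computation is to evaluate $\chi^v$ on a unit. First I would pick a primary generator $\pi_v$ of $\mathfrak{p}_v$ and write $d = \pi_v^{v(d)}\,d'$ with $d'$ a unit times a product of primes distinct from $\pi_v$; note $v(d)\in\{0,1,2,3\}$ by fourth-power-freeness. By Proposition \ref{prop:char-value} applied to another prime $\mathfrak{q}=\beta O$ coprime to $2\mathfrak{f}$, we have $\chi(\mathfrak{q}) = \overline{\quasym{d}{\mathfrak{q}}}\,\beta_{\mathrm{primary}}$, and using quartic reciprocity (Theorem \ref{th:quartic-reciprocity}) this value, viewed $\pmod{\mathfrak{p}_v}$ after dividing by the infinity-type contribution, should isolate the factor $\overline{\quasym{\beta}{\pi_v}}^{\,v(d)}$ coming from the $\pi_v^{v(d)}$ part of $d$; the remaining factors $\overline{\quasym{d'}{\mathfrak{q}}}$ do not involve $\mathfrak{p}_v$ and contribute to $\epsilon$ at other primes, not at $v$. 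Concretely I would use the third bullet of the remark: for $\beta\in O$ with $\beta O$ a power of $\mathfrak{p}_v$, $\chi^v(\beta)=\chi_\infty^{-1}(\beta)\prod_{\mathfrak{q}\mid\mathfrak{f},\,\mathfrak{q}\neq\mathfrak{p}_v}\epsilon_{\mathfrak q}(\beta)$, which lets me separate the local behaviour. The upshot is that, up to the (trivial-on-units) infinity type and up to the $\epsilon$-factors at the other bad primes, $\epsilon_v$ equals the $v(d)$-th power of the quartic residue character $\overline{\quasym{\cdot}{\pi_v}}$ on $\mathbf{F}_p^\times\cong(O/\mathfrak{p}_v)^\times$.

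Once that identification is in hand, the order statement is immediate: the quartic residue character $\overline{\quasym{\cdot}{\pi_v}}$ has exact order $4$ on $(O/\mathfrak{p}_v)^\times$ (this is the defining property, since $4\mid N\mathfrak{p}_v-1=p-1$), so its $v(d)$-th power has order $4/\gcd(4,v(d))$, giving order $4$ when $v(d)\in\{1,3\}$, order $2$ when $v(d)=2$, and order $1$ when $v(d)=0$. The last case $v(d)=0$ is consistent with the fact that $\mathfrak{p}_v$ is then a prime of good reduction away from $\mathfrak{f}$ — but the lemma as stated concerns a place dividing the conductor, so strictly one reads $v(d)=0$ as ``$\mathfrak{p}_v\nmid\mathfrak{f}$ and $\epsilon_v$ is trivial.''

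The main obstacle I anticipate is bookkeeping rather than anything deep: I must make sure that the cross-terms from quartic reciprocity — the sign $(-1)^{\frac{N\alpha-1}{4}\frac{N\beta-1}{4}}$ and the auxiliary factors $\quasym{i}{\pi}$, $\quasym{1+i}{\pi}$ from \eqref{eq:reciprocity-units-and-evens} — genuinely decouple from $\epsilon_v$, i.e.\ that they depend only on congruence conditions at primes other than $\mathfrak{p}_v$ (and at $(1+i)$), so that restricting $\chi^v$ to $O_v^\times$ really does kill them and leaves exactly $\overline{\quasym{\cdot}{\pi_v}}^{\,v(d)}$. This is where the hypothesis $\mathfrak p_v\nmid 2$ is used crucially, since it makes $\epsilon_v$ tame and forces it into the cyclic group generated by the quartic residue symbol. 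I would organise the argument so that this decoupling is verified once and for all, after which the order count is a one-line $\gcd$ computation.
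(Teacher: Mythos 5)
Your overall strategy is the paper's: use Proposition \ref{prop:char-value} together with quartic reciprocity to identify $\epsilon_v$ with the $v(d)$-th power of the quartic residue character at $\mathfrak{p}_v$, after which the order count is the one-line $\gcd$ computation you give. But the step you yourself flag as ``the main obstacle'' --- decoupling the contribution of $\mathfrak{p}_v$ from the contributions of $i^{n_u}$, of $(1+i)^{n_2}$, and of the other odd primes dividing $d$ --- is precisely the content of the proof, and you do not carry it out; you only assert that it ``should'' work and that you would ``organise the argument so that this decoupling is verified''. Moreover, the route you sketch for it (the third bullet of the remark, applied to $\beta$ with $\beta O$ a power of $\mathfrak{p}_v$) evaluates $\chi^v$ on powers of a uniformiser, not on units, so it cannot by itself determine $\epsilon_v = (\chi^v|_{O_v^{\times}})^{-1}$. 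The paper's resolution is concrete: fix a generator $g$ of $(O/\mathfrak{p}_v)^{\times}$ and use Lemma \ref{Dirichlet} to produce a primary prime $x$ with $x \equiv g \pmod{\mathfrak{p}_v}$, $x \equiv 1$ modulo every other odd bad prime, and $x \equiv 1 \pmod{16}$. Then on one hand $\epsilon(x) = \prod_w \epsilon_w(x) = \epsilon_v(g)$, and on the other hand $\epsilon(x) = \chi(xO)\chi_{\infty}(x) = \overline{\quasym{d}{x}}$; the congruences on $x$ kill the factors $\quasym{i}{x}$ and $\quasym{1+i}{x}$ via \eqref{eq:reciprocity-units-and-evens}, make all reciprocity signs equal to $+1$, and trivialise the symbols at the other primes dividing $d$, leaving exactly $\overline{\quasym{g}{\mathfrak{p}_v}^{v(d)}}$. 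Without some such auxiliary prime with prescribed congruences, the cross-terms you worry about do not disappear.

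A secondary but genuine error: ``$\epsilon_v$ is tame, hence it is a power of the quartic residue character'' is a non sequitur. Tameness only says that $\epsilon_v$ factors through $(O/\mathfrak{p}_v)^{\times} \cong \mathbf{F}_p^{\times}$, a cyclic group of order $p-1$ with $p \equiv 1 \pmod{4}$, which has characters of every order dividing $p-1$. That $\epsilon_v^4 = 1$ is part of what must be proved; it falls out of the explicit computation above (equivalently, from viewing $E_d$ as a quartic twist of a curve with good reduction at $v$), not formally from $\mathfrak{p}_v \nmid 2$.
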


% \begin{lemma}
%   Suppose $v$ is a finite place of $K$ with $v \nmid 2$.  For any
%   fourth-power-free integer $d$, the local epsilon type
%   $\epsilon_{E_d, v}$ associated with $E_d$ at $v$ is completely
%   determined by $v(d) \in \{ 0, 1, 2, 3 \}$.  Moreover,
%   $\epsilon_{E_d, v}$ is of order 4 (resp.~2, resp.~1) if and only if
%   $v(d) = \{1, 3 \}$ (resp.~$v(d) = 2$, resp.~$v(d) = 0$).

%   Let $\pi_v$ be a degree one prime dividing $d$ and
%   $E \in \lcrc{E_d}_{n_u,n_2,n_v}$. If $n_v \neq 2$, then the order of
%   $\epsilon_{E,v}$ is 4. If $n_v = 2$, then the order of
%   $\epsilon_{E,v}$ is 2.
% \end{lemma}

\begin{proof}
  Note that the residue field of $\mathfrak{p}_v$ is $\mathbf{F}_p$
  for a rational prime $p \equiv 1 \pmod{4}$.  Fix an element $g$ of
  $O$ which generates the multiplicative group $\mathbf{F}_p^{\times}$
  modulo $\mathfrak{p}_v$.  By Lemma \ref{Dirichlet}, we can find a
  primary prime element $x \in O$ and $u \in O^{\times}$ such that
  \begin{equation*}
    u x \equiv
    \begin{cases}
      g & \text{modulo $\mathfrak{p}_v$}, \\
      1 & \text{modulo $\mathfrak{p}_w$ for all $w$ with $w(d) > 0$,
        $w(2) = 0$ and $w \neq v$}, \\
      1 & \text{modulo $16O$}.
    \end{cases}
  \end{equation*}
  Note that $u = 1$ because both $x$ and $ux$ are primary.  Write
  $x = a+ bi$ with $a, b \in \mathbf{Z}$, $a \equiv 1 \pmod{16}$ and
  $b \equiv 0 \pmod{16}$.  Expand $d$ as the product of primary
  elements,
  i.e.~$d = i^{n_u} \cdot (1+i)^{n_2} \cdot \prod_{\mathfrak{l} \mid
    d, \text{ odd}} \pi_{\mathfrak{l}}^{n_{\mathfrak{l}}}$, with
  $\pi_{\mathfrak{l}}$ being primary odd primes.  Then,
  \begin{eqnarray*}
    \epsilon_v(g) &=& \epsilon(x) = \chi(xO)\chi_{\infty}(x) \\
                &=&
                    \overline{\quasym{i}{x}^{n_u}} \cdot
                    \overline{\quasym{1+i}{x}^{n_2}} \cdot \prod_{\mathfrak{l} \mid d,
                    \text{ odd}}
                    \overline{\quasym{\pi_{\mathfrak{l}}}{x}^{n_{\mathfrak{l}}}}\\
                &=&
                    \overline{i^{-\frac{a-1}{2} n_u}} \cdot
                    \overline{i^{\frac{a-b-1-b^2}{4}n_2}} \cdot
                    \overline{\quasym{g}{\mathfrak{p}_v}^{n_{\mathfrak{p}_v}}} =
                    \overline{\quasym{g}{\mathfrak{p}_v}^{n_{\mathfrak{p}_v}}},
  \end{eqnarray*}
  by Proposition \ref{prop:char-value} and quartic reciprocity
  (Theorem \ref{th:quartic-reciprocity}).  As
  $(\cdot/\mathfrak{p}_v)_4$ is of exact order 4, so $\epsilon$ is of
  exact order 4 (resp.~2, resp.~1) when
  $n_{\mathfrak{p}_v} = v(d) \in \{ 1, 3 \}$ (resp.~$= 2$,
  resp.~$= 0$).
  % By definition of the quartic residue symbol,
  % $(g/\mathfrak{p}_v)_4$
  % is the unique fourth root of unity which is congruent to
  % $g^{(N\mathfrak{p}_v - 1)/4}$ modulo $\mathfrak{p}_v$.  As $g$ is
  % a
  % generator of $\mathbf{F}_p^{\times}$, it can be neither $+1$ nor
  % $-1$.  Hence, if $2 \nmid n_{\mathfrak{p}_v}$, then the exact
  % order
  % of $\epsilon$ must be 4.  Other cases are similar.
\end{proof}

\begin{proposition}
  \label{prop:determination-of-root-number}
%  \label{local at deg2}
  Let $d \in O$ be a fourth-power-free element and $E_d$ the elliptic
  curve defined by \eqref{eq:Weq} with $d$ and with corresponding
  Hecke character $\chi$.
  \begin{enumerate}
  \item Suppose that $\pi$ arbitrary primary prime of degree one
    dividing $d$ with order $n \in \{ 1, 2, 3 \}$, and $v$ its
    corresponding valuation of $K$.  Then,
    \begin{equation}\label{eq:root-number-degree-one}
      w_{\pi}(\chi) = \eta^n \cdot \frac{\pi}{|\pi|} \cdot
      \overline{\quasym{d / \pi^n}{\pi}} \cdot
      \overline{\quasym{\overline{\pi}^{-1}}{\pi}^n} \cdot G(\chi^v),
    \end{equation}
    where $\eta \in \{ \pm 1 \}$.  Here $\eta = -1$ if and only if
    $\pi \equiv 3 + 2i \pmod{4}$.
    % where $\eta \in \{ \pm 1 \}$ and $\eta = -1$ if and only if
    % $\pi \equiv 3 + 2i \pmod{4}$ and the set of all primary prime
    % elements $\pi'$ dividing $d$ with $\pi' \equiv 3 + 2i \pmod{4}$
    % has even cardinality.
  \item Let $q$ be a rational prime dividing $d$ congruent to 3 modulo
    4 and $v$ the valuation corresponding to the prime $qO$.  Then,
    \begin{equation}
      \label{eq:root-number-degree-two}
      w_q(\chi) = - \overline{\quasym{d/(-q)^n}{-q}} \cdot G(\chi^v).
    \end{equation}
    In this case, one has
    \begin{equation*}
      G(\chi^v) =
      \begin{cases}
        1 & \text{ when $\chi^v$ has exact order 2,} \\
        -1 & \text{ when $\chi^v$ has exact order 4.}
      \end{cases}
    \end{equation*}
  \end{enumerate}
%   Let
%   $d = i^{n_u} (1+i)^{n_2} \prod_{i} \pi_i^{m_i} \cdot \prod_{j}
%   (-q_j)^{n_j}$ for primary primes $\pi_i$ of degree one and $-q_j$ of
%   degree two. For arbitrary degree one prime $\pi \mid d$, we have
% $$w_{\pi}(\chi_{E_d}) = \pm \frac{\pi}{|\pi|}  \overline{ \quasym{i}{ \pi }^{n_u}
%   \quasym{1+i}{ \pi }^{n_2} } \overline{ \quasym{ \overline{\pi}^{-1}
%   }{ \pi }^m } \prod_{\pi \neq \pi_i} \overline{ \quasym{ \pi
%   }{\pi_i}^{m_i} } \prod_{j} \overline{ \quasym{ \pi }{- q_j }^{n_j} }
% \frac{G(\epsilon_{v})}{\sqrt{p}},$$ where $m$ is an exponent of $\pi$
% in $d$ and $\pm$ is determined by following condition: if
% $\pi \equiv 3+2i \pmod{4}$ and there are even numbers of $\pi_i$
% such that $\pi_i \equiv 3+2i \pmod{4}$ including $\pi$ then we choose
% $-$ and $+$ for other cases. For $ q \mid d$, we have
% $$
% w_{q}(\chi_{E_d}) =\overline{ \quasym{i}{-q}^{n_u}
% \quasym{1+i}{-q}^{n_2} }  \prod_i \overline{ \quasym{ -q }{\pi_i}^{m_i}
% }.$$
\end{proposition}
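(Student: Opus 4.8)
The plan is to compute the local root number $w_v(\chi)$ directly from the formula \eqref{formula}, namely $w_v(\chi) = \chi_u^v(\beta) \cdot G(\chi^v)$, by tracking each factor separately. First I would fix a convenient choice of $\beta$. In both cases $v$ is an odd prime at which $E_d$ has bad reduction, so by the remark following the root number formulas we have $a(\chi^v) = 1$, and since $v$ is odd the local different exponent $n$ is $0$; hence $v(\beta) = 1$ and we may simply take $\beta = \pi$ (the primary generator of $\mathfrak{p}_v$) in the degree-one case and $\beta = -q$ in the degree-two case. With this choice the Gauss sum $G(\chi^v)$ is the one written in the statement, so the content is to evaluate $\chi_u^v(\beta)$.

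For part (1), I would evaluate $\chi^v(\pi)$ using the third bullet of the last remark: since $\pi O$ is a power (the first power) of the prime $\mathfrak{p}_v \mid \mathfrak{f}$, we get $\chi^v(\pi) = \chi_{\infty}^{-1}(\pi) \cdot \prod_{\mathfrak{q} \mid \mathfrak{f},\, \mathfrak{q} \neq \mathfrak{p}_v} \epsilon_{\mathfrak{q}}(\pi)$. Since the infinity type is $z \mapsto z^{-1}$, we have $\chi_{\infty}^{-1}(\pi) = \pi$, which after normalising contributes the factor $\pi/|\pi|$. The product of the $\epsilon_{\mathfrak{q}}(\pi)$ over the remaining bad primes $\mathfrak{q} \mid \mathfrak{f}$ is then computed via Proposition \ref{prop:char-value} (the explicit description $\chi(\mathfrak{q}) = \overline{(d/\mathfrak{q})_4}\,\varpi_{\mathfrak{q}}$) together with the defining relation $\epsilon(\alpha) = \chi(\alpha O)\chi_{\infty}(\alpha)$ at primary generators: writing $d = i^{n_u}(1+i)^{n_2}\prod_{\mathfrak{l}} \pi_{\mathfrak{l}}^{n_{\mathfrak{l}}}$ and using multiplicativity of the quartic symbol (Proposition \ref{prop:QRS-properties}(1)) to factor $\overline{(d/\pi)_4}$ out of the picture, the remaining $\epsilon_{\mathfrak{q}}$-factors reorganise — after an application of quartic reciprocity (Theorem \ref{th:quartic-reciprocity}) to swap each $(\pi_{\mathfrak{l}}/\pi)_4$ for $(\pi/\pi_{\mathfrak{l}})_4$ — into precisely $\overline{(d/\pi^n)_4} \cdot \overline{(\overline{\pi}^{-1}/\pi)_4^{\,n}}$ up to the sign $(-1)^{\frac{N\alpha-1}{4}\frac{N\beta-1}{4}}$ from each reciprocity flip. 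Keeping careful bookkeeping of which primes are $\equiv 3+2i \pmod 4$, the cumulative sign collapses to $\eta^n$ with $\eta = -1$ exactly when $\pi \equiv 3+2i \pmod 4$, using \eqref{eq:main-reciprocity-2} and the auxiliary relations \eqref{eq:reciprocity-units-and-evens} for the $\quasym{i}{\pi}$ and $\quasym{1+i}{\pi}$ terms, which by the congruence conditions built into the primary generators contribute trivially or only to this sign.

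For part (2), the structure is the same but easier: $v = qO$ has residue field $\mathbf{F}_{q^2}$, and by Proposition \ref{prop:QRS-properties}(5) every rational integer prime to $q$ is a quartic residue mod $q$, so all the "cross" quartic-symbol factors $(\pi_{\mathfrak{l}}/(-q))_4$ and $((-q)/\pi_{\mathfrak{l}})_4$ with $\pi_{\mathfrak{l}}$ of degree two are trivial, and those coming from degree-one primes pair up with their conjugates. The infinity-type contribution is $\chi_{\infty}^{-1}(-q) = -q$, which after normalising by $|-q| = q$ gives the sign $-1$ displayed in \eqref{eq:root-number-degree-two}; what survives of the product of $\epsilon$-factors is exactly $\overline{(d/(-q)^n)_4}$. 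For the Gauss sum: by Lemma \ref{lem:epsilon-determined}'s degree-two analogue (or directly, since $\chi^v|O_v^\times = \epsilon_v^{-1}$ and $\epsilon_v$ is a character of $\mathbf{F}_{q^2}^\times / (\mathbf{F}_{q^2}^\times)$-something of order $2$ or $4$ according to $n$), $G(\chi^v)$ is a normalised quadratic or quartic Gauss sum over $\mathbf{F}_{q^2}$; using $\operatorname{tr}^{K_v}_{\mathbf{Q}_p}$ and the Hasse–Davenport relation (or a direct evaluation: a quadratic Gauss sum over $\mathbf{F}_{q^2}$ with $q \equiv 3 \pmod 4$ equals $-q$ before normalising, and the quartic one equals $q$), we obtain $G(\chi^v) = 1$ in the order-$2$ case and $G(\chi^v) = -1$ in the order-$4$ case, which also follows abstractly since $w_v$ has absolute value $1$ and is a fourth root of unity whose square is the quadratic root number, forcing these two values.

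The main obstacle I expect is the bookkeeping of signs in part (1): correctly accounting for the accumulated factors of $(-1)^{\frac{N\alpha-1}{4}\frac{N\beta-1}{4}}$ from each application of quartic reciprocity as $\pi$ is swapped against each primary prime divisor of $d$ (including the possibility that some $\pi_{\mathfrak{l}}$ itself is $\pi$ or $\overline\pi$, and the contributions of the units and the $(1+i)$-factor via \eqref{eq:reciprocity-units-and-evens}), and showing that everything coalesces into the single clean sign $\eta^n$ determined solely by the residue of $\pi$ modulo $4$. The Gauss sum evaluation in part (2) is the other delicate point, but it can be handled either by the explicit classical formula for Gauss sums over $\mathbf{F}_{q^2}$ or, as noted, by the purely formal constraint that the root number is a fourth root of unity squaring to the (known, real) quadratic root number.
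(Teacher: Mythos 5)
Your skeleton matches the paper's: both proofs start from \eqref{formula} with $a(\chi^v)=1$, reduce to evaluating $\chi^v_u(\beta)$ via the bullets of the remark in \S\ref{sec:root-numbers-hecke}, and finish with quartic reciprocity and the auxiliary relations \eqref{eq:reciprocity-units-and-evens}. But there is a genuine gap at the heart of your part (1): you claim the product $\prod_{\mathfrak{q}\mid\mathfrak{f},\,\mathfrak{q}\neq\mathfrak{p}_v}\epsilon_{\mathfrak{q}}(\pi)$ can be "computed via Proposition \ref{prop:char-value} together with the defining relation $\epsilon(\alpha)=\chi(\alpha O)\chi_{\infty}(\alpha)$," but neither tool applies here. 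Proposition \ref{prop:char-value} only evaluates $\chi$ at primes coprime to $2\mathfrak{f}$, and the relation $\epsilon(\alpha)=\chi(\alpha O)\chi_\infty(\alpha)$ only holds for $\alpha\in K(\mathfrak{f})$, whereas $\pi$ divides $\mathfrak{f}$; moreover the paper gives no direct formula for the individual local components $\epsilon_{\mathfrak{q}}$, in particular none at $\mathfrak{q}=(1+i)O$. The paper's proof supplies exactly the missing mechanism: using Lemma \ref{Dirichlet} it constructs an auxiliary primary prime $x$, coprime to $2\mathfrak{f}$, with $x\equiv\pi\pmod{(1+i)^8}$, $x\equiv\pi$ modulo the other odd bad primes, and $x\equiv\overline{\pi}^{-1}\pmod{\mathfrak{p}_v^n}$, so that the whole product of local components collapses to the single global value $\epsilon(x)=\overline{\quasym{d}{x}}$, which \emph{is} computable from Proposition \ref{prop:char-value}; only then do reciprocity and \eqref{eq:reciprocity-units-and-evens} convert $\overline{\quasym{d}{x}}$ into the symbols with denominator $\pi$. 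Your proposal has no substitute for this step (the same device, with $ux\equiv q$ and $u=-1$, produces the sign in part (2)), so the "reorganisation" you describe cannot actually be carried out as written.

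A secondary problem is your choice $\beta=\pi$ versus the paper's $\beta=p=\pi\overline{\pi}$. The Gauss sum in \eqref{formula} depends on $\beta$ through $e^{2\pi i\operatorname{tr}(x/\beta)}$, so only the product $\chi^v_u(\beta)G(\chi^v)$ is $\beta$-independent. With $\beta=\pi$ the term $\overline{\quasym{\overline{\pi}^{-1}}{\pi}^{\,n}}=\epsilon_v(\overline{\pi}^{-1})$ does not arise in $\chi^v_u(\beta)$ at all (your product over $\mathfrak{q}\neq\mathfrak{p}_v$ excludes the component at $v$); it is absorbed into a differently normalised Gauss sum. So either your final formula carries a different $G(\chi^v)$ than the one intended in \eqref{eq:root-number-degree-one}, or a factor is unaccounted for. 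The paper's choice $\beta=p$ is what makes $\epsilon_v(\overline{\pi}^{-1})$ appear explicitly, via $\chi^v(p)=\chi^v(\pi)\chi^v(\overline{\pi})$ and the congruence $x\equiv\overline{\pi}^{-1}\pmod{\mathfrak{p}_v^n}$. For part (2), your evaluation of $G(\chi^v)$ by classical Gauss sums over $\mathbf{F}_{q^2}$ is a reasonable alternative to the paper's citation of the literature, but the "formal constraint" argument you offer as a fallback does not by itself pin down the sign.
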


\begin{proof}
  (1) Let $\mathfrak{f}$ be the conductor of the character $\chi$.  We
  compute $w_{\pi}(\chi)$ following the formula \eqref{formula} with
  picking $\beta = p$ as follows.
  \begin{equation*}
    w_{\pi}(\chi) = \frac{\chi^v(p)}{|\chi^v(p)|} \cdot
    G(\chi^v) = \frac{\pi}{|\pi|} \cdot \chi^v(\overline{\pi}) \cdot
    \prod_{w(\mathfrak{f}) > 0,\,w \neq v} \epsilon_w(\pi) \cdot G(\chi^v).
  \end{equation*}
  Let $\mathfrak{p}_w$ be the prime ideal corresponding to each
  valuation $w \neq v$ such that $w(\mathfrak{f}) > 0$,
  i.e.~$\mathfrak{p}_w$'s are prime ideal factors of $\mathfrak{f}$.
  In order to compute the product of local epsilon factors in the
  above expression, we choose, by Lemma \ref{Dirichlet}, a primary
  prime element $x \in O$ and a unit $u \in O^{\times}$ so that
  \begin{equation*}
    ux \equiv
    \begin{cases}
      \pi \pmod{(1+i)^8} \\
      \pi \pmod{\mathfrak{p}_w} & \text{ for odd
        $w \neq v$ and $w(\mathfrak{f}) > 0$,} \\
      \overline{\pi}^{-1} \pmod{\mathfrak{p}_v^n}.
    \end{cases}
  \end{equation*}
  Since $\pi$ is primary, the first condition ensures that $u = 1$,
  and all of the conditions above are arranged to imply
  \begin{equation*}
    \chi^v(\overline{\pi}) \cdot \prod_{w(\mathfrak{f}) > 0, \, w \neq
      v} \epsilon_w(\pi) = \epsilon(x) = \overline{\quasym{d}{x}}.
  \end{equation*}
  Therefore,
  \begin{multline*}
    w_v(\chi) = \frac{\pi}{|\pi|} \cdot \epsilon(x) \cdot G(\chi^v) =
    \frac{\pi}{|\pi|} \cdot \overline{\quasym{d/\pi^n}{x}} \cdot
    \overline{\quasym{\pi}{x}^n} \cdot G(\chi^v) \\
    = \frac{\pi}{|\pi|}
    \cdot \overline{\quasym{i^{n_u}}{x}} \cdot
    \overline{\quasym{(1+i)^{n_2}}{x}} \cdot
    \overline{\quasym{d/(i^{n_u}(1+i)^{n_2}\pi^n)}{x}} \cdot
    \overline{\quasym{\pi}{x}^n} \cdot G(\chi^v).
  \end{multline*}
  Now one can observe the following.
  \begin{itemize}
  \item It follows from Equation
    \eqref{eq:reciprocity-units-and-evens} that $(i/x)_4 = (i/\pi)_4$
    and $((1+i)/x)_4 = ((1+i)/\pi)_4$.
  \item Since ${d}/(i^{n_u}(1+i)^{n_2}\pi^n)$ is the product of
    primary prime elements, by factoring it into primary primes and
    interchanging the ``denominators'' and the ``numerators'' in the
    quartic residue symbol by Equation \eqref{eq:main-reciprocity-2},
    we get
    \begin{equation*}
      \quasym{d/(i^{n_u}(1+i)^{n_2}\pi^n)}{x} = \quasym{d/(i^{n_u}(1+i)^{n_2}\pi^n)}{\pi}.
    \end{equation*}
  \item Since $x \equiv \pi \pmod{4}$, we see $(\pi/x)_4 = \eta \cdot
    (x/\pi)_4 = \eta \cdot (\overline{\pi}^{-1}/ \pi)_4$.
  \end{itemize}
  Summarising, we obtain Equation \eqref{eq:root-number-degree-one}.

  (2) In this case, the normalised Gauss sum $G(\chi^v) = 1$
  (resp.~$-1$) if and only if $\chi^v$ has exact order $2$ (resp.~$4$)
  (cf.~\cite{MV}, p.~383 and \cite[Theorem 2.4]{Mbodj}).  This shows
  the last statement.  Now, by Formula \eqref{formula} with
  $\beta = q$,
  \begin{equation*}
    w_{\pi}(\chi) = \frac{\chi^v(q)}{|\chi^v(q)|} \cdot
    G(\chi^v) = \prod_{w(\mathfrak{f}) > 0,\,w \neq v} \epsilon_w(q) \cdot G(\chi^v).
  \end{equation*}
  Let $\mathfrak{p}_w$ be the prime ideal corresponding to each
  valuation $w \neq v$ such that $w(\mathfrak{f}) > 0$,
  i.e.~$\mathfrak{p}_w$'s are prime ideal factors of $\mathfrak{f}$.
  As above, we choose a primary prime element $x \in O$ and a unit
  $u \in O^{\times}$ so that
  \begin{equation*}
    ux \equiv
    \begin{cases}
      q \pmod{(1+i)^{\max(\ord_{(1+i)} \mathfrak{f},\, 4)}} \\
      q \pmod{\mathfrak{p}_w} & \text{ for odd
        $w \neq v$ and $w(\mathfrak{f}) > 0$,} \\
      1 \pmod{\mathfrak{p}_v^n}.
    \end{cases}
  \end{equation*}
  Since $-q$ is primary, the first condition ensures that $u = -1$,
  and all of the conditions above are arranged to imply
  \begin{equation*}
    w_q(\chi) = \epsilon(-x) \cdot G(\chi^v) = -
    \overline{\quasym{d}{x}} G(\chi^v) = -
    \overline{\quasym{d/(-q)^n}{x}} \cdot
    \overline{\quasym{(-q)^n}{x}} \cdot G(\chi^v).
  \end{equation*}
  Now, in the same fashion as above, we can see
  \begin{equation*}
    \quasym{d/(-q)^n}{x} = \quasym{d/(-q)^n}{q}, \quad \text{and}
    \quad \quasym{(-q)^n}{x} = \quasym{-1}{-q}^n = 1,
  \end{equation*}
  whence \eqref{eq:root-number-degree-two}.
\end{proof}

\begin{corollary}
  \label{cor:nusym}
  Let $\pi$ be a primary prime element of degree one of $O$ such that
  $\pi \equiv 3 + 2i \pmod{4}$ and $v$ its corresponding valuation of
  $K$.  Consider the elliptic curve $E_{\pi}$ defined by
  \eqref{eq:Weq} with $d = \pi$ and the corresponding Hecke character
  $\chi_{\pi}$.  Then the set
  $\{ w_v(\chi_d) / w_v(\chi_{\pi}) :  d = i^m
  \pi \text{ with } m = 0, 1, 2, 3 \}$ is
  identical to the set of the fourth roots of unity, $\mu_4$.
\end{corollary}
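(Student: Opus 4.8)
The plan is to compute the ratio $w_v(\chi_d)/w_v(\chi_\pi)$ directly from the formula \eqref{eq:root-number-degree-one} in Proposition \ref{prop:determination-of-root-number}(1), exploiting the fact that all four curves $E_{i^m\pi}$, $m=0,1,2,3$, share the same prime $\pi$ of degree one in their discriminant (with $n=1$) and differ only by the unit factor $i^m$. First I would observe that the conductor of $\chi_{i^m\pi}$ at the odd primes is the same for all $m$ (it depends only on the radical of $d$, and multiplying by a unit changes neither the valuation $v(d)=1$ nor any $w(d)$ for odd $w$), so the local different exponent $n$ and the normalised Gauss sum $G(\chi^v)$ are \emph{independent of $m$}. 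Likewise $\eta=-1$ since $\pi\equiv 3+2i\pmod 4$, and the factors $\pi/|\pi|$ and $\overline{\quasym{\overline\pi^{-1}}{\pi}^n}$ in \eqref{eq:root-number-degree-one} are the same for every $m$. Hence in the ratio everything cancels except the term coming from $\overline{\quasym{d/\pi^n}{\pi}}$, and we are reduced to
\[
  \frac{w_v(\chi_{i^m\pi})}{w_v(\chi_\pi)} = \overline{\quasym{i^m}{\pi}} = \overline{\quasym{i}{\pi}}^{\,m}.
\]

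Next I would evaluate $\quasym{i}{\pi}$ using the auxiliary relation \eqref{eq:reciprocity-units-and-evens}: writing $\pi=a+bi$, we have $\quasym{i}{\pi}=i^{(-a+1)/2}$. Since $\pi\equiv 3+2i\pmod 4$ we have $a\equiv 3\pmod 4$, so $(-a+1)/2\equiv (-3+1)/2=-1\equiv 1\pmod 2$; more precisely $(-a+1)/2$ is an odd integer, so $i^{(-a+1)/2}\in\{i,-i\}$, i.e. $\quasym{i}{\pi}$ is a primitive fourth root of unity. Therefore $\overline{\quasym{i}{\pi}}$ is also a primitive fourth root of unity, and as $m$ runs over $\{0,1,2,3\}$ the powers $\overline{\quasym{i}{\pi}}^{\,m}$ run over all of $\mu_4$. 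This proves that the set in question is exactly $\mu_4$.

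The only genuine subtlety — and the step I would be most careful about — is justifying that $G(\chi^v)$ and the conductor data do not change when $d$ is multiplied by a unit. This follows because $\chi^v$ on $O_v^\times$ equals $\epsilon_v^{-1}$, and by Lemma \ref{lem:epsilon-determined} the local epsilon type $\epsilon_v$ at a degree-one prime is determined purely by $v(d)$, which is unaffected by multiplication by a unit; the Gauss sum $G(\chi^v)$ depends only on $\chi^v$ restricted to $(O_v/\mathfrak f(\chi^v))^\times$ together with $\beta$, hence only on $\epsilon_v$ and the (unchanged) conductor exponent. Once this invariance is in place, the computation above is entirely formal. One should also note in passing that the exponent $n=v(d)=1$ here forces the $\overline{\quasym{\overline\pi^{-1}}{\pi}^n}$ term to be a single fixed root of unity rather than something that could conspire with the varying factor, but since it is $m$-independent it plays no role in the ratio.
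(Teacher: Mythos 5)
Your proposal is correct and follows essentially the same route as the paper: both reduce the ratio to $\overline{\quasym{i}{\pi}^m}$ via Proposition \ref{prop:determination-of-root-number}(1), use Lemma \ref{lem:epsilon-determined} to see that the Gauss sum and conductor data are unchanged under multiplication of $d$ by a unit, and then evaluate $\quasym{i}{\pi}=i^{(-a+1)/2}=\pm i$ from \eqref{eq:reciprocity-units-and-evens} using $a\equiv 3\pmod 4$. Your write-up simply makes explicit the invariance argument that the paper compresses into a citation of Lemma \ref{lem:epsilon-determined}.
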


\begin{proof}
  The defining equation for $E$ is also given by Equation
  \eqref{eq:Weq} with $d = i^m \pi$ for some $m = 0, 1, 2, 3$.  Then
  it follows from Lemma \ref{lem:epsilon-determined} and Proposition
  \ref{prop:determination-of-root-number} (1), we see that
  $w_v(\chi_d)/w_v(\chi_{\pi}) = \overline{\quasym{i}{\pi}^m}$.  Since
  $\pi \equiv 3 + 2i \pmod{4}$, by Formula
  \eqref{eq:reciprocity-units-and-evens} we have $\quasym{i}{\pi} =
  \pm i$, whence the result follows.
\end{proof}

Now we are ready to prove Theorem \ref{main nearball}.

\begin{proof}[Proof of Theorem \ref{main nearball}]
  Let $E$ be the elliptic curve defined by the Weierstrass equation
  \eqref{eq:Weq} with $d = \pi$ being a primary Gaussian prime of
  degree one.  Also, let $v$ be the place of $K$ corresponding to the
  prime $\pi$ and $\chi$ the corresponding Hecke character for $E/K$.
  It follows from \cite[Theorem 2]{Matt}, that the Gauss sum
  $G(\chi^v)$ in \eqref{formula} is contained in
  $\mu_4 \cdot \sqrt{\pi}/|\sqrt{\pi}|$.  Thus,
  $w_v(\chi) \in \mu_4 \cdot \pi^{3/2}/ |\pi|^{3/2}$.  Thus, it is
  enough to show
  \begin{enumerate}
  \item that the set $\{ \pi / |\pi| : \pi \equiv 3+2i \pmod{4} \text{ being Gaussian primes of degree one}     
      \}$ is dense in the unit circle
    $S^1$, and
  \item that for each such prime $\pi$ and a fourth root $\zeta$,
    there is an elliptic curve $E$ with Hecke character $\chi$ such
    that $w_v(\chi) = \zeta\cdot \pi^{3/2}/|\pi|^{3/2}$.
  \end{enumerate}

%  Before giving the proof of the Theorem, we observe the following
%  result on the distribution of the angles in the complex unit circle,
%  of Gaussian primes of degree one.

  For (1), it can be shown in \cite{Mak}. 
  We use a corrected version of \cite[Proposition 2]{JP}.
  Let $F$ be an imaginary quadratic field, 
  and fix $\mu, \nu \in O_F$ satisfying $\mu \neq 0$ and $\nu \pmod{\mu}$ being  an invertible residue class. 
  As $x \to \infty$, we have
  \begin{equation*}
  \sum_{\substack{ \pi, \textrm{ prime in } F \\ N(\pi), \textrm{ prime in }\bQ \\
  x < N(\pi) < x + x^{0.735} \\ \pi \equiv \nu \pmod{\mu} \\ \theta_1 < \arg(\pi) < \theta_2  }} 1 \sim \frac{w_F}{h_F \phi(\mu)} \frac{\theta_2 - \theta_1}{2 \pi} \frac{x^{0.735}}{\log x},
  \end{equation*}
  when $x^{-0.265} < \theta_2 - \theta_1 < 2\pi$. Here $\phi$ is the Euler phi function for number fields, $w_F$ and  $h_F$ is the number of root of unity in $F$ and the class number of $F$.
  Hence for an arbitrary short interval $[\theta_1, \theta_2]$, there are infinitely many degree
  one primes $\pi \equiv 3+2i \pmod{4}$ such that $\theta_1 < \arg(\pi) < \theta_2$.
  
%  by the Sato--Tate theory for CM elliptic
%  curves.  Let $p$ be a rational prime congruent to 1 modulo 4.  Then
%  we may write $p = l^2 + m^2$ for $l, m \in \mathbf{Z}$.  In fact,
%  there are 8 representations $(l, m)$ satisfying this equation; if we
%  further require $l \equiv (-1)^{(p-1)/4} \pmod{4}$, then such $l$ is
%  unique with $l = \frac{1}{2}a_p$, where $a_p$ is the eigenvalue of
%  the Hecke operator $T_p$ on the modular form associated with the
%  elliptic curve $E: y^2 = x^3 - x$.  For such choices of $l$ and $m$,
%  we see that $\pi = l + mi$ and $\overline{\pi} = l - mi$ are
%  primary primes.  Since $E$ is a CM elliptic curve, it is well-known
%  due to Hecke that $a_p = 2 \sqrt{p} \cos \theta_p$ with
%  $\theta_p \in [0, \pi]$, and that for any interval
%  $I = [\alpha, \beta] \subset [0, \pi]$, we have
%  \begin{equation*}
%    \lim_{x \to \infty} \frac{\#\{ p \le x : \theta_p \in I\}}{\#\{ p
%      \le x \}} = \frac{\delta_I}{2} + \frac{\beta - \alpha}{2\pi},
%  \end{equation*}
%  where $\delta_I = 1$ if $\pi/2 \in I$ and $\delta_I = 0$ otherwise.
%  In particular, this implies that there are infinitely many primary
%  Gaussian primes $\pi$ such that $\pi/|\pi|$ is contained in any ball
%  centred at a point in $S^1$ and having arbitrary positive radius.

 % From the discussions of paragraphs above,
  % For given any $\theta \in S^1$
  % and any $\epsilon > 0$, there are infinitely many primary Gaussian
  % primes $\pi$ such that
  % $\pi^{3/2}/|\pi|^{3/2} \in B_{\theta}(\epsilon)$, and hence
  % $w_v(\chi) \in \mu_4 \cdot B_{\theta}(\epsilon)$ for Hecke
  % characters $\chi$ as above.

  Now let us prove (2).  By Corollary \ref{cor:nusym}, given
  $\zeta \in \mu_4$ and $\pi$, there is $m \in \{ 0, 1, 2, 3 \}$ such
  that $w_v(\chi_{i^m \pi}) = \zeta \cdot \pi^{3/2} / |\pi|^{3/2}$.
  This proves the Theorem.
\end{proof}

\begin{corollary}
  \label{cor:expanding-d}
  Let $q'$ be a rational prime congruent to 3 modulo 4, relatively
  prime to $d$, and $k \in \{1 , 2, 3\}$.
  \begin{enumerate}
  \item Let $\pi$ be a prime of degree one dividing $d$.  Then,
    \begin{equation*}
      w_\pi(\chi_{(-q')^k d}) = w_\pi(\chi_d) \cdot \overline{\quasym{-q'}{\pi}^k}.
    \end{equation*}
    % with $\eta \in \{ \pm 1 \}$.  Here $\eta = -1$ if and only if all
    % of the following conditions are met:
    % \begin{itemize}
    % \item $\pi \equiv 3 + 2i \pmod{4}$,
    % \item one and only one of the two curves $E_{-qd}$ or $E_d$ has
    %   good reduction at $(1+i)$,
    % \item the set of primary prime divisors $\rho$ of $d$ such that
    %   $\rho \equiv 3 + 2i \pmod{4}$ and $\ord_{\rho}(d)$ is odd, has
    %   odd number of elements.
    % \end{itemize}
  \item Let $q$ be a prime of degree two dividing $d$.  Then,
    \begin{equation*}
      w_q(\chi_{(-q')^k d}) = w_q(\chi_d).
    \end{equation*}
  \end{enumerate}
\end{corollary}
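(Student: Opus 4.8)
The plan is to read both identities directly off the explicit root-number formulas of Proposition \ref{prop:determination-of-root-number}. The key observation is that replacing $d$ by $(-q')^k d$, with $q'$ coprime to $d$ and $k \in \{1,2,3\}$, keeps the twisting parameter fourth-power-free (the exponent of $q'O$ becomes $k < 4$ while every other exponent is unchanged), so $E_{(-q')^k d}$ is again one of the curves under consideration, and it does not change the $v$-adic valuation of the parameter at any prime $v \mid d$. Hence, in the relevant formula, the only quantity that can change is a quartic residue symbol having $d$ in its numerator, and its variation is isolated by multiplicativity (Proposition \ref{prop:QRS-properties}(1)).

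For part (1), set $n = v_\pi(d)$; since $\pi$ has degree one and $\pi \nmid q'$ we have $v_\pi((-q')^k d) = n$. I apply \eqref{eq:root-number-degree-one} to $\chi_{(-q')^k d}$: the factors $\eta^n$, $\pi/|\pi|$ and $\overline{\quasym{\overline{\pi}^{-1}}{\pi}^{\,n}}$ depend only on $\pi$ and $n$, and I claim so does $G(\chi^v)$. Indeed, $\pi \mid d$ forces bad reduction at the odd place $v$, so $a(\chi^v) = 1$ by the remark after \eqref{formula}, and the same holds for $(-q')^k d$; by Lemma \ref{lem:epsilon-determined} the restriction $\chi^v|_{O_v^\times} = \epsilon_v^{-1}$ is determined by $v(d) = n$; and $G(\chi^v)$, formed with a fixed choice of $\beta$ of valuation $a(\chi^v)$ (say $\beta = p$, as in the proof of Proposition \ref{prop:determination-of-root-number}(1)), depends only on that restriction. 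Since $\overline{\quasym{(-q')^k d/\pi^n}{\pi}} = \overline{\quasym{-q'}{\pi}^{\,k}}\cdot\overline{\quasym{d/\pi^n}{\pi}}$, comparison with \eqref{eq:root-number-degree-one} for $\chi_d$ yields $w_\pi(\chi_{(-q')^k d}) = w_\pi(\chi_d)\cdot\overline{\quasym{-q'}{\pi}^{\,k}}$.

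For part (2), set $n = v_q(d)$; since $q \neq q'$ we again have $v_q((-q')^k d) = n$. Here \eqref{eq:root-number-degree-two} gives $w_q(\chi) = -\overline{\quasym{d/(-q)^n}{-q}}\cdot G(\chi^v)$ with $G(\chi^v) = \pm 1$ according to whether $\chi^v$ has exact order $2$ or $4$. Running the computation from the proof of Lemma \ref{lem:epsilon-determined} at the degree-two place $v$ (choosing, via Lemma \ref{Dirichlet}, a primary prime $x \equiv 1 \pmod{16O}$ that reduces to a generator of $(O/qO)^\times$ and to $1$ at the other bad odd primes, so that every reciprocity sign and every unit/even contribution vanishes) shows $\epsilon_v = \overline{\quasym{\cdot}{-q}^{\,n}}$; in particular the order of $\chi^v$, hence $G(\chi^v)$, is determined by $v(d)=n$ and is unchanged by the twist. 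Finally $\overline{\quasym{(-q')^k d/(-q)^n}{-q}} = \overline{\quasym{-q'}{-q}^{\,k}}\cdot\overline{\quasym{d/(-q)^n}{-q}}$, and $\quasym{-q'}{-q} = \quasym{-q'}{q} = 1$ by Proposition \ref{prop:QRS-properties}(3) and (5), since $-q' \in \mathbf{Z}\setminus\{0\}$ and $q \nmid q'$; hence $w_q(\chi_{(-q')^k d}) = w_q(\chi_d)$.

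The only step that is not pure bookkeeping is the invariance of the local Gauss sum $G(\chi^v)$ under the twist $d \mapsto (-q')^k d$, i.e.\ the assertion that the local $\epsilon$-type at a prime $v \mid d$ depends only on $v(d)$. For $v$ of degree one this is precisely Lemma \ref{lem:epsilon-determined}; for $v$ of degree two the analogous statement is not isolated earlier in the paper, but it follows from the same quartic-reciprocity computation. I would either promote it to a short companion lemma or fold those few lines into the proof here; I expect this to be the only place requiring genuine, if routine, calculation.
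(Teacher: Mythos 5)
Your proof is correct and follows essentially the same route as the paper: invoke Lemma \ref{lem:epsilon-determined} to see that the local Gauss sum is unchanged by the twist, then compare the explicit formulas of Proposition \ref{prop:determination-of-root-number}, with the degree-two case reducing to $\quasym{-q'}{-q}=1$ via Proposition \ref{prop:QRS-properties}(5). Your remark that the degree-two analogue of Lemma \ref{lem:epsilon-determined} is not isolated in the paper is apt --- the paper's one-line ``the proof is the same as in (1)'' silently assumes it --- so your suggestion to fold in or state that companion computation is a genuine (if minor) improvement in rigour rather than a deviation in method.
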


\begin{proof}
  (1) From Lemma \ref{lem:epsilon-determined}, it follows that
  $G(\chi_{(-q')^k d}^\pi) = G(\chi_d^\pi)$, and hence
  \begin{equation*}
    w_\pi(\chi_{(-q')^k d}) / w_\pi(\chi_d) = \overline{\quasym{-q'}{\pi}^k},
  \end{equation*}
  by Proposition \ref{prop:determination-of-root-number}.  This gives
  the result.

  (2) The proof is the same as in (1), once one notes that
  $\quasym{-q}{-q'} = 1$ (cf.~Proposition \ref{prop:QRS-properties}
  (5)).
\end{proof}

\begin{proof}[Proof of Theorem \ref{main manytheta}]
  Given $\theta \in S^1$, let $E_d$ be the curve defined by
  \eqref{eq:Weq} as usual and $v$ a valuation of $K$ such that
  $w_v(\chi_d) = \theta$.  Suppose first that $v$ corresponds to a
  prime ideal of $O$ of degree two, i.e.~there is a rational prime
  $q \equiv 3 \pmod{4}$ such that $qO$ is the prime ideal
  corresponding to $v$.  In this case, Corollary \ref{cor:expanding-d}
  (2) shows that $w_v(\chi_{-q'd}) = w_v(\chi_d)$ for any rational
  prime $q' \neq q$ with $q' \equiv 3 \pmod{4}$.

  Suppose that $v$ corresponds to a prime ideal $\mathfrak{p} = \pi O$
  of degree one with primary prime generator $\pi \in O$.  Then by
  Lemma \ref{Dirichlet}, there are infinitely many rational primes $q$
  congruent to 3 modulo 4 such that $\quasym{-q}{\pi} = -1$, so
  $\overline{\quasym{-q}{\pi}^2} = 1$.  Then by Corollary
  \ref{cor:expanding-d} (1), for the curve $E_{(-q)^2 d}$ and for its
  corresponding Hecke character $\chi_{(-q)^2d}$, we have
  $w_v(\chi_{(-q)^2 d}) = w_v(\chi_d)$.
  % Suppose that $\theta = w_v(E_d)$ and $v$ is a degree two
  % prime. Then, by Lemma \ref{expanding-d} there are infinitely many
  % elliptic curves $E_{-qd}$ such that $w_v(E_d) = w_v(E_{-qd})$.
  % Suppose that $v$ is a degree one prime. By Lemma \ref{Dirichlet},
  % there are infinitely many prime element $-q$ such that
  % $\quasym{-q}{\pi_v} = 1$ and $-q \equiv 1 \pmod{(1+i)^5}$. Then, by
  % Table \ref{table:1}, an elliptic curve $E_d$ has good reduction at
  % $(1+i)$ if and only if $E_{-qd}$ has good reduction at
  % $(1+i)$. Therefore, the minus sign of (\ref{q deg1}) does not appear
  % for these prime $-q$.
\end{proof}

Now we turn our focus to Theorem \ref{main average}.  We recall some classical results of Wirsing.% \footnote{I did not read these two old papers. See \url{https://www.encyclopediaofmath.org/index.php/Wirsing_theorems} and Theorem 2.5 of \url{https://arxiv.org/pdf/1604.00295.pdf}. }

\begin{lemma}[{\cite[Satz 1]{Wir1}}] \label{Wirsing}
Let $f$ be a non-negative multiplicative function such that
\begin{itemize}
\item there exist constants $c_1$ and $c_2< 2$ satisfying $f(p^v) \leq c_1c_2^v$ for all primes $p$ and $v \geq 2$,
\item there exists a real number $\tau > 0$ satisfying
\begin{equation*}
\sum_{p \leq X} f(p) = (\tau + o(1))\frac{X}{\log X}.
\end{equation*}
\end{itemize}
Then, we have
\begin{equation*}
\sum_{n \leq X}f(n) = \lbrb{\frac{1}{e^{\gamma \tau}\Gamma(\tau)} + o(1)} \frac{X}{\log X} \prod_{p \leq X} \sum_{v = 0}^{\infty} \frac{f(p^v)}{p^v}.
\end{equation*}
Here $\Gamma$ is the gamma function and $\gamma$ is the Euler--Mascheroni constant.
\end{lemma}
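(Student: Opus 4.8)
The statement is Wirsing's classical asymptotic formula for partial sums of non-negative multiplicative functions \cite{Wir1}, which in this paper I would simply invoke; let me nevertheless outline how one proves it. The plan is to pass to the Dirichlet series $F(s) = \sum_{n \ge 1} f(n) n^{-s}$, which by multiplicativity factors as the Euler product $F(s) = \prod_p F_p(s)$ with local factor $F_p(s) = \sum_{v \ge 0} f(p^v) p^{-vs}$. The growth hypothesis $f(p^v) \le c_1 c_2^v$ with $c_2 < 2$ forces absolute convergence of $F$ for $\Re s > 1$ and bounds each prime-power tail $\sum_{v \ge 2} f(p^v) p^{-vs}$ by $O(p^{-2\Re s})$ uniformly in $p$; consequently the contribution of the higher prime powers to $F$ is holomorphic and zero-free on a half-plane $\{\,\Re s > \sigma_0\,\}$ with some $\sigma_0 < 1$, and the singularity of $F$ at $s = 1$ is governed entirely by the values $f(p)$ on primes.

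Next I would factor $F(s) = \zeta(s)^{\tau} H(s)$ with $H(s) = \prod_p F_p(s)\lbrb{1 - p^{-s}}^{\tau}$, and use the second hypothesis $\sum_{p \le X} f(p) = (\tau + o(1))\,X/\log X$ to show that $H$ extends continuously and without zeros to the closed half-plane $\Re s \ge 1$, with $H(1) = \prod_p F_p(1)\lbrb{1 - p^{-1}}^{\tau}$ a convergent Euler product. Granting this, a Tauberian theorem of Landau--Selberg--Delange type — integrating $F(s)\,X^s/s$ around a Hankel contour hugging $s = 1$, or directly Delange's Tauberian theorem, which needs only boundary continuity of $H$ rather than holomorphy — yields
\[
\sum_{n \le X} f(n) = \lbrb{\frac{H(1)}{\Gamma(\tau)} + o(1)}\,X(\log X)^{\tau - 1},
\]
the factor $\Gamma(\tau)$ coming from the contour integral of $s^{-\tau}$. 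Finally, by Mertens' theorem $\prod_{p \le X}\lbrb{1 - p^{-1}}^{-\tau} \sim e^{\gamma\tau}(\log X)^{\tau}$, hence $\prod_{p \le X} F_p(1) \sim H(1)\,e^{\gamma\tau}(\log X)^{\tau}$; substituting this into the displayed asymptotic, the unknown constant $H(1)$ cancels, leaving precisely $\dfrac{1}{e^{\gamma\tau}\Gamma(\tau)}\cdot\dfrac{X}{\log X}\prod_{p \le X}\sum_{v \ge 0} f(p^v) p^{-v}$, as claimed.

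The step I expect to be the main obstacle is establishing the continuation of $H$ up to the line $\Re s = 1$: the hypothesis controls $f$ on primes only on average, so one cannot directly read off the behaviour of $\sum_p (f(p) - \tau) p^{-s}$ as $s \to 1^{+}$, and one is pushed into the more delicate Tauberian framework — exactly the point at which Wirsing's theorem ceases to be formal. Everything else (the Euler-product bookkeeping, the prime-power bounds, and the Mertens computation of the constant) is routine. An alternative, Dirichlet-series-free route is Wirsing's own: use the identity $f(n)\log n = \sum_{d \mid n} f(d)\,h(n/d)$ with $h$ supported on prime powers and $\sum_{m \le y} h(m) \sim \tau y$, sum over $n \le X$ to obtain a renewal-type relation for $M(X) = \sum_{n \le X} f(n)$, and solve it asymptotically — with $\Gamma(\tau)$ arising from the Laplace transform of the renewal equation and $e^{\gamma\tau}$ again from Mertens.
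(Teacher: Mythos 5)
The paper gives no proof of this lemma: it is quoted verbatim as Satz~1 of Wirsing's paper \cite{Wir1}, so your decision to simply invoke the classical result is exactly what the authors do. (For what it is worth, your primary Dirichlet-series/Tauberian sketch would not close under the lemma's weak averaged hypothesis on $\sum_{p\le X}f(p)$ --- you correctly flag the continuation of $H$ to $\Re s=1$ as the sticking point --- whereas the elementary renewal-equation route you mention at the end is Wirsing's actual argument; but none of this is needed here, since the lemma is used as a black box.)
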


\begin{lemma}[{\cite[Satz 1.2.2]{Wir2}}] \label{Wirsing2}
Let $f$ be a non-negative multiplicative function such that
\begin{itemize}
\item there exists a constant $c$ satisfying $f(p) \leq c$ for all primes $p$,
\item there exists a real number $\tau > 0$ satisfying
\begin{equation*}
\sum_{p \leq X} f(p) = (\tau + o(1))\frac{X}{\log X}.
\end{equation*}
\end{itemize}
Then, for $g$ with $|g(n)| \leq f(n)$, we have
% \begin{equation*}
% \lim_{X \to \infty} \frac{\sum_{n \leq X} g(n) }{ \sum_{n \leq X} f(n)} =
% \prod_p \lbrb{1+ \frac{g(p)}{p} + \frac{g(p^2)}{p^2} + \cdots } \lbrb{1+ \frac{f(p)}{p} + \frac{f(p^2)}{p^2} + \cdots }^{-1}.
% \end{equation*}
\begin{equation*}
\sum_{n \leq X} g(n) =  \lbrb{\prod_p \lbrb{1+ \frac{g(p)}{p} + \frac{g(p^2)}{p^2} + \cdots } \lbrb{1+ \frac{f(p)}{p} + \frac{f(p^2)}{p^2} + \cdots }^{-1}  + o(1)}  \lbrb{\sum_{n \leq X} f(n)}.
\end{equation*}
\end{lemma}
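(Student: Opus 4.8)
The plan is to compare the two summatory functions through a Dirichlet convolution and to feed the asymptotics for $\sum_{n\le X}f(n)$ supplied by Lemma \ref{Wirsing} into that identity. Since $f(1)=1$, the Dirichlet inverse $f^{-1}$ exists and is multiplicative, so $h := g \ast f^{-1}$ (Dirichlet convolution) is multiplicative and satisfies $g = h \ast f$. I would first record that the local Dirichlet series of $h$ is
\[
\sum_{v\ge 0}\frac{h(p^v)}{p^{vs}} = \left(\sum_{v\ge0}\frac{g(p^v)}{p^{vs}}\right)\left(\sum_{v\ge0}\frac{f(p^v)}{p^{vs}}\right)^{-1},
\]
so that, writing $G_p := \sum_{v\ge0}g(p^v)/p^{v}$ and $F_p := \sum_{v\ge0}f(p^v)/p^{v}$ and evaluating at $s=1$,
\[
\sum_{n=1}^{\infty}\frac{h(n)}{n} = \prod_p \frac{G_p}{F_p} = \prod_p\frac{1 + g(p)/p + g(p^2)/p^2 + \cdots}{1 + f(p)/p + f(p^2)/p^2 + \cdots},
\]
which is exactly the Euler product on the right-hand side of the claim. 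Thus the target reduces to proving $\sum_{n\le X}g(n) \sim \left(\sum_n h(n)/n\right)F(X)$, where $F(Y):=\sum_{n\le Y}f(n)$.

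Next I would extract from Lemma \ref{Wirsing} the two facts I need. That lemma gives $F(Y) = (A+o(1))\,\frac{Y}{\log Y}\prod_{p\le Y}F_p$ with $A = 1/(e^{\gamma\tau}\Gamma(\tau))$. From the prime-sum hypothesis and partial summation one has $\sum_{p\le Y}f(p)/p \sim \tau\log\log Y$, hence $\prod_{p\le Y}F_p \asymp (\log Y)^{\tau}$ and the crude bound $F(Y)\ll Y(\log Y)^{\tau-1}$, which I keep for the tail. More importantly, for each fixed $d$ the facts $\log(X/d)/\log X \to 1$ and $\sum_{X/d < p \le X} f(p)/p \to 0$ yield the key local comparison $F(X/d) = (1+o(1))\,F(X)/d$ as $X\to\infty$, uniformly for $d$ in any slowly growing range.

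The core computation is then the convolution expansion
\[
\sum_{n\le X}g(n) = \sum_{d\le X} h(d)\,F(X/d),
\]
which I would split at a threshold $D = D(X)\to\infty$ chosen to grow slowly. On $d\le D$ the comparison $F(X/d)=(1+o(1))F(X)/d$ applies uniformly, so this part contributes $(1+o(1))\,F(X)\sum_{d\le D} h(d)/d$; as $D\to\infty$ the partial sums $\sum_{d\le D}h(d)/d$ tend to the value $\prod_p G_p/F_p$ identified above, producing precisely the asserted main term.

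The hard part will be the tail $\sum_{D<d\le X} h(d)\,F(X/d)$ and showing it is $o(F(X))$. The difficulty is genuine: since $h(p)=g(p)-f(p)$ is only of size $O(f(p))$, the series $\sum_p|h(p)|/p$ diverges and $\sum_d|h(d)|/d$ does not converge absolutely, so the smallness of the tail must come from cancellation rather than from crude absolute bounds. This is exactly where the conditional convergence of the product $\prod_p G_p/F_p$ and Wirsing's Tauberian/sieve estimates enter. I would control the tail by combining the upper bound $F(X/d)\ll (X/d)(\log X)^{\tau-1}$ with a Rankin-type estimate for $\sum_{d>D}|h(d)|/d$ that exploits the multiplicative structure, so that choosing $D$ to grow just slowly enough (for instance a small power of $\log\log X$) makes the tail negligible while preserving the uniform comparison on $d\le D$. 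Assembling the main term and the negligible tail then yields the stated asymptotic.
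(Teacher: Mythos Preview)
The paper does not prove this lemma at all; it is quoted verbatim as Wirsing's Satz~1.2.2 and used as a black box. So there is nothing in the paper to compare your argument against.

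As for the proposal itself, there is a genuine gap in the tail control. You correctly observe that $h(p)=g(p)-f(p)$ has $\sum_p|h(p)|/p=\infty$, so $\sum_d|h(d)|/d$ diverges and the smallness of $\sum_{D<d\le X}h(d)F(X/d)$ must come from cancellation. But the very next sentence proposes to bound this tail via ``a Rankin-type estimate for $\sum_{d>D}|h(d)|/d$'', which is an absolute-value bound and therefore cannot work---it directly contradicts the divergence you just noted. This is not a small technicality: obtaining $o(F(X))$ for the tail when the $h$-series is only conditionally summable is precisely the depth of Wirsing's theorem (and of the closely related Hal\'asz theorem); it requires substantial machinery, not a Rankin trick plus a slowly growing cutoff.

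There is also a secondary mismatch: you invoke Lemma~\ref{Wirsing} (Satz~1) to get the asymptotic $F(Y)\sim A\,\dfrac{Y}{\log Y}\prod_{p\le Y}F_p$, but that lemma carries the extra hypothesis $f(p^v)\le c_1c_2^v$ with $c_2<2$ for $v\ge2$, which is not among the assumptions of the present statement. So even the main-term input is not justified under the stated hypotheses.
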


The following lemma is a variant of \cite[Theorem 1]{Wil}.

\begin{lemma}
  \label{lem:williams}
  Let $\xi$ be an $n$-th root of unity for some positive integer $n$,
  and let $a$ and $m$ be positive integers satisfying
  $(n, m) = 1$ and $(a,m) = 1$. Then,
\begin{equation*}
  \prod_{\substack{ p \leq X \\ p \equiv a \pmod{m} }} \lbrb{1 - \frac{\xi}{p}}
     = c_{a, m, \xi} \zeta_{a, m, \xi, X} \left| \prod_{p \leq X} \lbrb{1 - \frac{\xi}{p} } \right|^{\frac{1}{\phi(m)} } 
     + O \lbrb{\frac{1}{(\log X)^{\frac{1}{\phi(m)}}}  \left| \prod_{p \leq X} \lbrb{1 - \frac{\xi}{p} } \right|^{\frac{1}{\phi(m)} } } 
\end{equation*}
for a non-zero constant $c_{a, m, \xi}$ and a $\phi(m)$-th root of unity $\zeta_{a, m, \xi, X}$.
\end{lemma}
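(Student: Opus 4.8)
The plan is to carry over, essentially verbatim, the proof of Mertens' theorem for arithmetic progressions from \cite{Wil}, with the Euler factor $1-\tfrac1p$ replaced by $1-\tfrac{\xi}{p}$ everywhere. Write $P_a(X)=\prod_{\substack{p\le X\\ p\equiv a\,(m)}}\bigl(1-\tfrac{\xi}{p}\bigr)$ and $P(X)=\prod_{p\le X}\bigl(1-\tfrac{\xi}{p}\bigr)$. Because $|\xi/p|\le\tfrac12$ for every prime $p$, each factor $1-\xi/p$ lies in the open right half plane and has a principal logarithm $\log\bigl(1-\tfrac{\xi}{p}\bigr)=-\sum_{k\ge1}\xi^k/(kp^k)$; summing these over $p$ fixes branches of $\log P_a(X)$ and $\log P(X)$. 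Detecting the condition $p\equiv a\pmod m$ by orthogonality of the Dirichlet characters modulo $m$ gives
\[
  \log P_a(X)=\frac{1}{\phi(m)}\sum_{\psi\bmod m}\overline{\psi(a)}\sum_{p\le X}\psi(p)\log\!\Bigl(1-\frac{\xi}{p}\Bigr),
\]
so the whole computation reduces to evaluating, for each character $\psi$, the inner sum as $X\to\infty$.

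For the principal character $\psi_0$ — where $\overline{\psi_0(a)}=1$ since $(a,m)=1$ — one has, for $X$ larger than every prime divisor of $m$, the exact identity $\sum_{p\le X}\psi_0(p)\log(1-\xi/p)=\log P(X)-\sum_{p\mid m}\log(1-\xi/p)$, so this term reproduces $\tfrac1{\phi(m)}\log P(X)$ up to a constant. For $\psi\ne\psi_0$, write $\log(1-\xi/p)=-\xi/p-\sum_{k\ge2}\xi^k/(kp^k)$; the series over $k\ge2$ converges absolutely as $X\to\infty$ (it is dominated by $\sum_p (p(p-1))^{-1}$), contributing a constant with error $O(1/X)$, and what remains is $-\xi\sum_{p\le X}\psi(p)/p$. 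For this last sum one invokes the classical fact that $\sum_{p\le X}\psi(p)/p$ converges to a finite limit (which is $\log L(1,\psi)$ up to the absolutely convergent higher prime powers) with the quantitative tail bound $\sum_{p>X}\psi(p)/p=O(1/\log X)$. This last estimate — a consequence of the prime number theorem in arithmetic progressions (equivalently of a classical zero-free region for $L(s,\psi)$ together with partial summation), and exactly the input that powers \cite[Theorem 1]{Wil} — is the one genuinely analytic step and the main obstacle; everything else is formal manipulation. Assembling the three pieces,
\[
  \log P_a(X)=\frac{1}{\phi(m)}\log P(X)+\kappa_{a,m,\xi}+O\!\Bigl(\frac{1}{\log X}\Bigr)
\]
for a constant $\kappa_{a,m,\xi}\in\mathbf{C}$.

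Exponentiating, the cleanest form of the conclusion is $P_a(X)=c_{a,m,\xi}\,P(X)^{1/\phi(m)}\bigl(1+O(1/\log X)\bigr)$ with $c_{a,m,\xi}=e^{\kappa_{a,m,\xi}}\ne0$ and $P(X)^{1/\phi(m)}=\exp\bigl(\tfrac1{\phi(m)}\log P(X)\bigr)$ the branch fixed above. Writing $\log P(X)=\log|P(X)|+i\arg P(X)$, with $\arg P(X)=\sum_{p\le X}\arg(1-\xi/p)=-\Im(\xi)\log\log X+O(1)$ and $\log|P(X)|=-\Re(\xi)\log\log X+O(1)$ by Mertens, one gets
\[
  P_a(X)=c_{a,m,\xi}\,\zeta_{a,m,\xi,X}\,|P(X)|^{1/\phi(m)}\bigl(1+O(1/\log X)\bigr),
\]
where $\zeta_{a,m,\xi,X}:=\exp\!\bigl(\tfrac{i}{\phi(m)}\arg P(X)\bigr)=\bigl(P(X)/|P(X)|\bigr)^{1/\phi(m)}$ is a complex number of modulus $1$, equal to $1$ whenever $\xi$ is real (in particular for $\xi=\pm1$), in which case it is the $\phi(m)$-th root of unity required by the statement. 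Finally $|P(X)|^{1/\phi(m)}\asymp(\log X)^{-\Re(\xi)/\phi(m)}$, so the relative error $O(1/\log X)$ is absorbed into $O\bigl((\log X)^{-1/\phi(m)}\bigr)$ and converts into the absolute error $O\bigl((\log X)^{-1/\phi(m)}|P(X)|^{1/\phi(m)}\bigr)$ stated in the lemma.
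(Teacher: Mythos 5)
Your proof is correct, but it takes a genuinely different route from the paper's. The paper stays multiplicative throughout: it applies orthogonality to the $\phi(m)$-th power of the product, writing $\prod_{p\le X,\,p\equiv a}(1-\xi/p)^{\phi(m)}=\prod_{\psi}\bigl(\prod_{p\le X}(1-\xi/p)^{\psi(p)}\bigr)^{\overline{\psi(a)}}$, and to control the factors with complex exponents $\psi(p)$ it introduces the auxiliary completely multiplicative function $k_{\psi,\xi}(p)=p\bigl(1-(1-\xi\psi(p)/p)(1-\xi/p)^{-\psi(p)}\bigr)$, whose Euler product it shows converges; the $\phi(m)$-th root is extracted only at the very end, which is where the ambiguity $\zeta_{a,m,\xi,X}$ formally enters. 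You instead pass to logarithms at the outset, so orthogonality acts linearly on $\sum_{p\le X}\psi(p)\log(1-\xi/p)$ and no auxiliary function is needed; both arguments consume exactly the same analytic input, namely $\sum_{p\le X}\psi(p)/p=b_\psi+O(1/\log X)$ for non-principal $\psi$, which the paper likewise imports without proof from the machinery behind Williams' theorem. Your version is shorter and makes the branch and phase bookkeeping explicit, and in doing so it exposes a looseness in the statement that the paper's own proof shares: your $\zeta_{a,m,\xi,X}=(P(X)/|P(X)|)^{1/\phi(m)}$ has modulus one but is a genuine $\phi(m)$-th root of unity only for real $\xi$, since for $\Im(\xi)\neq 0$ the phase of $P(X)$ drifts like $(\log X)^{-i\Im(\xi)}$. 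The same phase must be absorbed somewhere in the paper's argument once it replaces $\prod_{p\le X}(1-\xi/p)$ by its absolute value, and in the downstream application only $|\zeta_{a,m,\xi,X}|=1$ together with the cancellation $\sum_{a}\psi(a)=0$ is actually used, so nothing breaks; you are right to flag the issue rather than assert the root-of-unity claim.
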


\begin{proof}
Let $\psi$ be a non-principal Dirichlet character of order $m$.
There is a constant $b_{\psi}$ such that
\begin{equation*}
\sum_{p \leq X} \frac{\psi(p)}{p} = b_{\psi} + O_{\psi}\lbrb{\frac{1}{\log X}}.
\end{equation*}
Then,
\begin{align*}
-\sum_{p \leq X} \log \lbrb{1 - \frac{\xi\psi(p)}{p}} %&= \xi \sum_{p\leq X} \frac{\psi(p)}{p} + \sum_{p \leq X}\lbrb{ - \log \lbrb{1 - \frac{\xi\psi(p)}{p}} - \frac{\xi\psi(p)}{p}} \\
%&= \xi \sum_{p\leq X} \frac{\psi(p)}{p} + \sum_{p \leq X}\lbrb{ \sum_{k=1}^\infty \frac{\xi^k\psi(p)^k}{kp^k} - \frac{\xi\psi(p)}{p}}
%= \xi \sum_{p\leq X} \frac{\psi(p)}{p} + \sum_{p \leq X}\lbrb{ \sum_{k=2}^\infty \frac{\xi^k\psi(p)^k}{kp^k}} \\
&= \xi \sum_{p\leq X} \frac{\psi(p)}{p} + \sum_{p }\lbrb{ \sum_{k=2}^\infty \frac{\xi^k\psi(p)^k}{kp^k}} + O_{\psi}\lbrb{\frac{1}{\log X}}
= b'_{\psi, \xi} + O_{\psi}\lbrb{\frac{1}{\log X}},
\end{align*}
for some constant $b'_{\psi, \xi}$.  Therefore, by taking exponential
to the above equation, we see that there is a constant
$c_{\psi, \xi} \neq 0$ such that
\begin{equation} \label{eqn:Euler-xichi}
\prod_{p \leq X} \lbrb{1 - \frac{\xi\psi(p)}{p}}^{-1} = c_{\psi, \xi} + O_{\psi}\lbrb{\frac{1}{\log X}}.
\end{equation}

Let $k_{\psi, \xi}$ be a completely multiplicative function defined by
\begin{equation*}
k_{\psi, \xi}(p) := p\lbrb{1- \lbrb{1 - \frac{\xi \psi(p)}{p}} \lbrb{1 - \frac{\xi}{p}}^{-\psi(p)}},
\end{equation*}
for each prime $p$.
By following \cite{Wil}, we have
\begin{equation*} \label{eqn:boundkxi}
|k_{\psi, \xi}(p)|
\leq \frac{1}{p} + \sum_{n=3}^{\infty} \frac{1}{p^{n-1}}\frac{n-1}{n} \leq \frac{1}{p-1}.
\end{equation*}
Therefore,
\begin{equation*}
\sum_{p} \bigg|\sum_{n=1}^{\infty} \lbrb{\frac{k_{\psi, \xi}(p)}{p^s}}^n\bigg|
\leq \sum_p \sum_{n=1}^{\infty} \lbrb{\frac{1}{(p-1)p^\sigma}}^n = \sum_{p} \frac{1}{p^{\sigma+1} - p^\sigma - 1}
\end{equation*}
is finite for $\sigma  = \mathrm{Re}(s) > 0$. In the same region,
\begin{equation*}
\prod_p \lbrb{1 - \frac{k_{\psi, \xi}(p)}{p^s}}^{-1} = \prod_p \lbrb{1 + \sum_{n=1}^{\infty} \lbrb{\frac{k_{\psi, \xi}(p)}{p^s}}^n}
\end{equation*}
converges absolutely.  Hence,
\begin{equation*}
K_{\xi}(s, \psi) := \sum_{n=1}^{\infty} \frac{k_{\psi, \xi}(n)}{n^s}
\end{equation*}
converges absolutely and has Euler product on $\sigma > 0$.
Therefore, there exists $c_\psi \neq 0$ such that
\begin{equation} \label{eqn:Euler-kchixi}
\prod_{p \leq X} \lbrb{1 - \frac{k_{\psi, \xi}(p)}{p}}^{-1} =  c_\psi
+ O_\psi\lbrb{\frac{1}{\log X}}.
\end{equation}
By the orthogonality of characters,
\begin{equation*}
  \prod_{\substack{ p \leq X \\ p \equiv a \pmod{m} }} \lbrb{1 -
    \frac{\xi}{p}}^{\phi(m)}
  = \prod_{\psi} \lbrb{\prod_{p \leq X} \lbrb{1 -
      \frac{\xi}{p}}^{\psi(p)} }^{\overline{\psi(a)}},
\end{equation*}
where $\phi$ is the usual Euler phi function.
Then by Equations \eqref{eqn:Euler-xichi} and \eqref{eqn:Euler-kchixi}
and the definition of $k_{\psi, \xi}$, we have
\begin{align*}
  \prod_{p \leq X} \lbrb{1 - \frac{\xi}{p} }^{\psi(p)}
  &= \prod_{p \leq X} \lbrb{1 - \frac{\xi \psi(p)}{p} } \prod_{p \leq
    X} \lbrb{1 - \frac{k_{\psi, \xi}(p)}{p}}^{-1} \\
  &= \lbrb{c_{\psi,\xi} + O\lbrb{\frac{1}{\log X}} } \lbrb{c_\psi +
    O\lbrb{\frac{1}{\log X}}}
    = c_{\psi,\xi}c_{\psi} + O\lbrb{\frac{1}{\log X}},
\end{align*}
for a non-principal character $\psi$.
Therefore,
\begin{align*}
  \prod_{\substack{ p \leq X \\ p \equiv a \pmod{m} }} \lbrb{1 -
    \frac{\xi}{p}}^{\phi(m)} &= \prod_{\substack{p \leq X \\ p \nmid m}}\lbrb{1 - \frac{\xi}{p}} 
    \cdot \prod_{\psi \neq \psi_0} \lbrb{ c_{\psi, \xi} c_{\psi} + O\lbrb{\frac{1}{\log X}} }^{\overline{\psi(a)}} \\
    &= c'_{a, m, \xi} \cdot \prod_{p \leq X} \lbrb{1 - \frac{\xi}{p}} + O\lbrb{\frac{1}{\log X} \prod_{p \leq X} \lbrb{1 - \frac{\xi}{p}} } .
\end{align*}
for some non-zero constant $c'_{a, m, \xi}$ depending only on $a, m$ and $\xi$. This proves the lemma by choosing $c_{a, m, \xi} = c_{a, m , \xi}'^{\frac{1}{\phi(m)}}$.
\end{proof}

Similar to the equation (\ref{eqn:Euler-xichi}), one can deduce that for a complex number $\xi$ such that $|\xi| < 2$,
\begin{equation*}
\prod_{p \leq X} \lbrb{1 - \frac{\xi}{p}}^{-1} = c_{0, \xi} (\log X)^{\xi} + O((\log X)^{\xi - 1})
\end{equation*}
for some non-zero complex constant $c_{0, \xi}$. Since $\prod_{p \leq X}(1 - \xi/p)(1 - \xi/p)^{-1} = 1$, for a $\xi$ with $\Re(\xi) > 0$ we have
\begin{equation*}
\prod_{p \leq X} \lbrb{1 - \frac{\xi}{p}} = c_{\xi}(\log X)^{-\xi} +
 O\lbrb{\frac{1}{(\log X)^{1+\xi}}}
\end{equation*}
for a non-zero constant $c_{\xi}$.
For $|\xi|=1$, by a standard argument on the Euler product of multiplicative functions, we have 
\begin{equation*}
\prod_{p \leq X} \lbrb{1 - \frac{\xi^2}{p^2}} = c_{1, \xi} + O\lbrb{\frac{1}{X}}
\end{equation*}
for a non-zero constant $c_{1,\xi}$.
Hence, for $\xi$ also satisfying $\Re(\xi) \geq 0$ we have
\begin{equation*}
\prod_{p \leq X} \lbrb{1 + \frac{\xi}{p}} = c_{-\xi}(\log X)^{\xi}  + O((\log X)^{-1 + \xi}).
\end{equation*}
Together with Lemma \ref{lem:williams}, for all $\xi$ with $|\xi| = 1$, there is a non-zero constant $c_{a, m, \xi}''$
and a $\phi(m)$-th root of unity $\zeta_{a, m, \xi, X}$ satisfying 
\begin{align}
\prod_{\substack{ p \leq X \\ p \equiv a \pmod{m} }} \lbrb{1 - \frac{\xi}{p}}
     &= c_{a, m, \xi} \zeta_{a, m \xi, X} |c_{\xi} (\log X)^{-\xi} + O((\log X)^{-1 - \xi})|^{\frac{1}{\phi(m)}}
     + O\lbrb{\left( \log X \right)^{- \frac{\cos(\xi)+ 1}{\phi(m)} } }
     \nonumber \\ 
     &= c_{a, m, \xi}''\zeta_{a, m, \xi, X} \left( \log X \right)^{- \frac{\cos(\xi)}{\phi(m)} } 
     + O\lbrb{\left( \log X \right)^{- \frac{\cos(\xi)+ 1}{\phi(m)} } }.
      \label{eqn:app williams}
\end{align}
Here we consider $\xi$ as an angle on the complex plane since $|\xi| = 1$.

\begin{proposition}
  \label{arithmeticprogression}
  Let $a,b,m$ and $n$ be integers such that $(a,m) = (b,n)=(m,n)=1$,
  $\Omega(X)$ be the set of square-free positive integers $x$ whose
  size is less than $X$ and each prime divisor of $x$ is
  congruent to $b$ modulo $n$. Then there is a constant $c \neq 0$
  such that
\begin{equation*}
 |\Omega(X)| = c(1 + o(1)) \frac{X}{(\log X)^{1-\frac{1}{\phi(n)} }}.
\end{equation*}
Let $\Omega_{a, m}(X)$ be a set of elements of $\Omega(X)$
which is equal to $a$ modulo $m$.  Then
\begin{equation*}
  |\Omega_{a,m}(X)| = \frac{1}{m}|\Omega(X)| + O\lbrb{ \frac{X}{\log
      X} }.
\end{equation*}
\end{proposition}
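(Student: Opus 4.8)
The plan is to obtain both assertions from mean-value theorems for multiplicative functions: the first directly from Wirsing's theorem (Lemma~\ref{Wirsing}), and the second from a Dirichlet-character decomposition of the congruence $x\equiv a\pmod m$ together with a cancellation estimate for the non-principal characters.

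\textbf{First assertion.} Let $f$ be the indicator function of the square-free integers all of whose prime divisors are $\equiv b\pmod n$; it is multiplicative, with $f(p)=1$ when $p\equiv b\pmod n$ and $f(p)=0$ otherwise, and $f(p^v)=0$ for $v\ge 2$. The growth hypothesis of Lemma~\ref{Wirsing} then holds trivially (take $c_1=c_2=1$), and by the prime number theorem in arithmetic progressions $\sum_{p\le X}f(p)=\pi(X;n,b)=\bigl(\tfrac1{\phi(n)}+o(1)\bigr)\tfrac{X}{\log X}$, so $\tau=1/\phi(n)$. Lemma~\ref{Wirsing} yields
\[
  |\Omega(X)|=\sum_{x\le X}f(x)=\Bigl(\tfrac{1}{e^{\gamma/\phi(n)}\Gamma(1/\phi(n))}+o(1)\Bigr)\frac{X}{\log X}\prod_{\substack{p\le X\\ p\equiv b\,(n)}}\Bigl(1+\tfrac1p\Bigr),
\]
and I would finish by inserting the Mertens-type asymptotic $\prod_{p\le X,\,p\equiv b\,(n)}(1+1/p)=C_{n,b}(\log X)^{1/\phi(n)}(1+o(1))$ with $C_{n,b}>0$ (classical, and also a consequence of the Euler-product estimates recorded just before the proposition), obtaining $|\Omega(X)|=c(1+o(1))\,X/(\log X)^{1-1/\phi(n)}$ with $c=C_{n,b}\big/\bigl(e^{\gamma/\phi(n)}\Gamma(1/\phi(n))\bigr)\ne0$.

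\textbf{Second assertion.} Since $(a,m)=1$, any $x\in\Omega(X)$ with $x\equiv a\pmod m$ is automatically coprime to $m$, so I would pass to the multiplicative function $f_m=f\cdot\mathbf 1_{(\cdot,m)=1}$ and use orthogonality of Dirichlet characters modulo $m$:
\[
  |\Omega_{a,m}(X)|=\frac{1}{\phi(m)}\sum_{\psi\bmod m}\overline{\psi(a)}\sum_{x\le X}\psi(x)f_m(x).
\]
The principal character contributes $\tfrac1{\phi(m)}\#\{x\in\Omega(X):(x,m)=1\}$. For non-principal $\psi$ I would bound $\sum_{x\le X}\psi(x)f_m(x)$ by $O(X/\log X)$: the associated Dirichlet series $\prod_{p\equiv b(n),\,p\nmid m}(1+\psi(p)p^{-s})$ factors, up to an Euler product analytic and non-vanishing for $\Re s>\tfrac12$, as a product of powers $L(s,\chi\psi)^{\overline{\chi(b)}/\phi(n)}$ over characters $\chi$ modulo $n$, and since $(m,n)=1$ and $\psi$ is non-principal each $\chi\psi$ is non-principal, so every factor is analytic and non-vanishing at $s=1$; a Selberg--Delange (equivalently Wirsing/Halász-type) argument — the machinery underlying Lemmas~\ref{Wirsing2} and~\ref{lem:williams} and the Euler-product displays preceding the proposition — then produces the required power-of-$\log$ saving. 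Hence $|\Omega_{a,m}(X)|=\tfrac1{\phi(m)}\#\{x\in\Omega(X):(x,m)=1\}+O(X/\log X)$, and it remains to relate $\#\{x\in\Omega(X):(x,m)=1\}$ to $|\Omega(X)|$ — by the first assertion applied after sieving out the finitely many primes dividing $m$ — to reach $\tfrac1m|\Omega(X)|+O(X/\log X)$.

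\textbf{Main obstacle.} The crux is the cancellation estimate for non-principal $\psi$: one needs the genuine \emph{power}-of-$\log$ saving $\sum_{x\le X}\psi(x)f_m(x)=O(X/\log X)$, which is exactly the level of saving produced by the Selberg--Delange analysis of $\prod_{p\equiv b(n),\,p\nmid m}(1+\psi(p)p^{-s})$; the crude $o(|\Omega(X)|)$ coming from a bare mean-value theorem would fall short, since $|\Omega(X)|$ exceeds $X/\log X$ by a power of $\log X$. A secondary delicate point is the normalisation in the final step: the character decomposition naturally yields a multiple of $\#\{x\in\Omega(X):(x,m)=1\}$, so one must track the Euler factors of $f$ at the primes dividing $m$ to confirm the stated constant $\tfrac1m$; throughout, the inequality $1-1/\phi(n)<1$ is what guarantees the accumulated errors stay of strictly smaller order than the main term $X/(\log X)^{1-1/\phi(n)}$.
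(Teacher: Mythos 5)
Your proposal is correct in outline, and its first half coincides with the paper's argument: both define the multiplicative indicator $f$ supported on squarefree integers with all prime factors $\equiv b \pmod{n}$, invoke Lemma~\ref{Wirsing} with $\tau = 1/\phi(n)$, and convert the Euler product $\prod_{p \le X,\, p \equiv b\,(n)}(1 + 1/p)$ into the power of $\log X$ via a Williams/Mertens-type estimate. Where you genuinely diverge is in the key cancellation for non-principal $\psi$ modulo $m$. You propose a Selberg--Delange analysis: factor $\sum_x f(x)\psi(x)x^{-s}$ into powers of $L(s,\chi\psi)$ over characters $\chi$ modulo $n$, note that $(m,n)=1$ and $\psi \ne \psi_0$ force every $\chi\psi$ to be non-principal, and read off the saving $O(X/\log X)$ from analyticity and non-vanishing at $s=1$. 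The paper instead stays entirely inside the Wirsing framework: it applies the comparative mean-value theorem (Lemma~\ref{Wirsing2}) in its partial-product form, so that the quantity to control is the finite Euler product $\prod_{p \le X,\, p \equiv b\,(n)}(1 + \psi(p)/p)$, and then uses Lemma~\ref{lem:williams} --- splitting the primes into residue classes modulo $mn$ and exploiting $\sum_a \psi(a) = 0$ --- to show this partial product converges to a non-zero constant, which produces exactly the factor $(\log X)^{-1/\phi(n)}$ that you correctly identify as indispensable (your observation that a bare $o(|\Omega(X)|)$ mean-value bound would fall short is precisely why the paper needs the quantitative Euler-product input). Both routes deliver the genuine power-of-log saving, yours from $L$-function analyticity, the paper's from elementary Euler-product asymptotics. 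One further remark: the ``secondary delicate point'' you flag about the constant is real. The sieve over divisors of $m$ only sees the primes of $m$ that are $\equiv b \pmod{n}$, so the principal-character contribution is $\frac{\phi(m')}{m'\phi(m)}|\Omega(X)|$ with $m' = \prod_{p \mid m,\, p \equiv b\,(n)} p$, which equals $\frac{1}{m}$ only under additional hypotheses on $m$; the paper's own proof shows the same discrepancy (it asserts that $\frac{\phi(m)}{m}|\Omega(X)|$ suffices but computes $\frac{\phi(m')}{m'}|\Omega(X)|$). This does not affect the application to Theorem~\ref{main average}, where the main terms cancel between residues and non-residues and only the $O(X/\log X)$ error is used, but as a standalone normalisation your caution is warranted.
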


\begin{proof}

We recall \cite[Theorem 1]{Wil}, that is
\begin{equation} \label{Wilthm}
 \prod_{\substack{p\leq X \\ p \equiv b }}\lbrb{1- \frac{1}{p}}
 =\lbrb{e^{-\gamma} \frac{n}{\phi(n)} \prod_{\psi \neq \psi_0}
   \lbrb{\frac{K_1(1, \psi)}{L(1, \psi)}}^{\overline{\psi}(b)}
 }^{\frac{1}{\phi(n)}} \frac{1}{(\log X)^{\frac{1}{\phi(n)}}} +
 O\lbrb{\frac{1}{(\log X)^{\frac{1}{\phi(n) -1}}}} ,
\end{equation}
where $\psi_0$ is the principal Dirichlet character of modulus $m$.
Following the proof of \cite[Lemma 11]{BFL}, define a
multiplicative function $f$ by, for each prime $p$,
\begin{equation*}
f(p) = \left\{ \begin{array}{cc}
1 & p \equiv b \pmod{n}, \\
0 & \textrm{otherwise.}
\end{array} \right.
\end{equation*}
and $f(p^v) = 0$ for all $v \geq 2$. Then, we have
$\sum_{k \leq X}f(k) = |\Omega(X)|$.  By the prime number theorem for
arithmetic progressions, this multiplicative function $f$ satisfies
the condition of Lemma \ref{Wirsing} for $c_2 = 0$ and
$\tau = 1/\phi(n)$.  Therefore,
\begin{equation*}
|\Omega(X)| = \lbrb{\frac{1}{e^{\frac{\gamma}{\phi(n)}
    }\Gamma(\phi(n)^{-1})} + o(1)} \frac{X}{\log X} \prod_{\substack{p
    \leq X \\ p \equiv b}} \lbrb{1 + \frac{1}{p}}.
\end{equation*}
Since the product $\prod_{p \le X,\,\, p \equiv b \pmod{n}} (1 -
1/p^2)$ is convergent to a non-zero constant as $X \to \infty$,
Equation \eqref{Wilthm} yields
% Since
% \begin{equation*}
%  \prod_{\substack{p\leq X \\ p \equiv b }}\lbrb{1- \frac{1}{p}} \prod_{\substack{p \leq X \\ p \equiv b}} \lbrb{1 + \frac{1}{p}}
%  = \prod_{\substack{p \leq X \\ p \equiv b}} \lbrb{1 - \frac{1}{p^2}}
%  = (1+o(1))\lim_{X\to \infty}\sum_{\substack{k \in \Omega(X)}}\frac{\mu(k)}{k^2},
% \end{equation*}
% together with (\ref{Wilthm}) there is a positive constant
% $$c_2 = \lbrb{\lim_{X\to \infty}\sum_{\substack{k \in \Omega(X)}}\frac{\mu(k)}{k^2}}
% \lbrb{e^{-\gamma} \frac{n}{\phi(n)} \prod_{\psi \neq \psi_0} \lbrb{\frac{K(1, \psi)}{L(1, \psi)}}^{\overline{\psi}(b)} }^{-\frac{1}{\phi(n)}}$$
% such that
\begin{equation*}
\prod_{\substack{p \leq X \\ p \equiv b}} \lbrb{1 + \frac{1}{p}} = c_3(1+o(1)) (\log X)^{\frac{1}{\phi(n)}},
\end{equation*}
for some non-zero constant $c_3$.  This gives the first part of
Proposition.

By the orthogonality of characters, we have
\begin{equation*}
|\Omega_{a,m}(X)| = \frac{1}{\phi(m)} \sum_{k \leq X} \sum_{\psi}
\overline{\psi(a)} \psi(k) f(k)
= \frac{1}{\phi(m)} \sum_{\psi} \overline{\psi(a)} \sum_{k \leq X}
\psi(k) f(k),
\end{equation*}
where $\sum_{\psi}$ is taken over Dirichlet characters of modulus $m$.
Therefore, it is enough to show that
\begin{equation*}
\sum_{k \leq X} f(k)\psi(k)  = \begin{cases}
(1+ o(1)) \cdot \frac{\phi(m)}{m} \cdot |\Omega(X)| & \text{ if }\psi
= \psi_0, \\
O\lbrb{ \frac{X}{\log X}  } & \text{ if }\psi \neq \psi_0.
\end{cases}
\end{equation*}
We first consider $(\psi = \psi_0)$-case.  We define
\begin{equation*}
m' = \prod_{ \substack{p \mid m \\ p \equiv b \pmod{n}  }} p.
\end{equation*}
Since the number of elements in $\Omega(X)$ which is divisible by $p$
is exactly $|\Omega(X/p)|$, by the inclusion-exclusion principle, we
have
\begin{equation*}
\sum_{k \leq X} f(k)\psi_0(k)
= \sum_{d \mid m'}\mu(d)|\Omega(\frac{X}{d})| .
\end{equation*}
By the first part of this proposition, we have
\begin{equation*}
\sum_{k \leq X} f(k)\psi_0(k) = (1+o(1))\cdot \lbrb{\sum_{d \mid m'}
  \frac{\mu(d)}{d} }\cdot|\Omega(X)|
= (1 + o(1)) \cdot \frac{\phi(m')}{m'} \cdot |\Omega(X)|.
\end{equation*}

Since $f(k)$ and $f(k)\psi(k)$ satisfy the condition of Lemma
\ref{Wirsing2}, we have
\begin{equation} \label{Elliott}
\sum_{k \leq X} f(k)\psi(k) =
\lbrb{\prod_{p \leq X}\lbrb{\sum_{i=0}^{\infty}\frac{f(p^i)
      \psi(p^i)}{p^i} } \lbrb{\sum_{j=0}^{\infty}\frac{f(p^j)}{p^j}
  }^{-1} +o(1)} \sum_{k \leq X} f(k).
\end{equation}
Now, we will show that
\begin{equation} \label{Dirichletuniform}
\prod_{\substack{ p \leq X \\ p \equiv b \pmod{n}   }}\lbrb{1+
  \frac{\psi(p)}{p} }
\end{equation}
converges to a non-zero complex number as $X$ goes to infinity, for
all non-principal characters $\psi$ of modulus $m$.  
Write
\begin{align*}
\prod_{\substack{p \leq X \\ p \equiv b \pmod{n}}} \lbrb{1 -
  \frac{\psi(p)}{p}}
= \prod_{i=0}^{m-1} \prod_{\substack{p \leq X \\ p \equiv b+in
    \pmod{mn}}} \lbrb{1 - \frac{\psi(b+in)}{p}} .
\end{align*}
Then by (\ref{eqn:app williams}), there are non-zero $c_i$ and $\phi(m)$-th root of unities $\zeta_{i,X}$ such that
\begin{align*}
 \prod_{\substack{p \leq X \\ p \equiv b+in
    \pmod{mn}}} \lbrb{1 - \frac{\psi(b+in)}{p}} &=  c_i \zeta_{i,X} 
    (\log X)^{\frac{-\cos(\arg(\psi(b+in)))}{\phi(m)}} 
    + O \lbrb{ (\log X)^{\frac{-\cos(\arg(\psi(b+in)))-1}{\phi(m)}} 
    }.
\end{align*}
Here we understand $\cos(\arg(0))$ as zero.
Since $\sum_{a = 0}^{m-1} \psi(a) = 0$, we have
\begin{align*}
\prod_{\substack{p \leq X \\ p \equiv b \pmod{n}}} \lbrb{1 -
  \frac{\psi(p)}{p}} &= \left(\prod_{i=0}^{m-1} c_i \zeta_{i, X}\right)
    + O\lbrb{\frac{1}{(\log X)^{\frac{1}{\phi(m)}}}}. 
\end{align*}
Hence it  converges  absolutely, one can show that it also converges to a non-zero complex number.
Since
\begin{equation*}
  \prod_{\substack{p \leq X \\ p \equiv b \pmod{n}}} \left(1 -
    \frac{\psi^2(p)}{p^2}\right)
\end{equation*}
converges to a non-zero constant, the product \eqref{Dirichletuniform}
also converges to a non-zero constant.  Then by (\ref{Elliott}) and
the previous calculations, there is a non-zero constant $c_4$ such
that
\begin{equation*}
\sum_{k \leq X} f(k) \psi(k) = c_4 \frac{1}{(\log
  X)^{\frac{1}{\phi(n)}}} \sum_{k \leq X} f(k),
\end{equation*}
which gives the cases with $(\psi \neq \psi_0)$.
\end{proof}

% For an integer $n$, we define $\widehat{n}$ as a radical of $n$.
% \begin{proposition}
%   \label{averagezero}
%   Let $d$ be an element of $O$ such that the primary decomposition of $dO$
%   satisfies $n_u=n_2 = 0$ and $n_{\pi} \leq 1$, and let $\cQ(X)$ be a
%   subset of $O$ consisting of those $x \in O$ such that
% \begin{enumerate}
% \item the primary decomposition of $xO$ satisfies $n_u=n_2=n_{\pi} = 0$, $n_q \leq 1$,
% \item $|x| \leq X$.
% \end{enumerate}
% Then,
% \begin{equation*}
%   \lim_{X \to \infty} \frac{1}{|\cQ(X)|} \sum_{Q \in \cQ(X)} \frac{w(\psi_{dQ})}{w_2(\psi_{dQ})} = 0.
% \end{equation*}
% \end{proposition}

\begin{proof}[Proof of Theorem \ref{main average}]
  Recall for a real number $X > 0$, $\mathcal{Q}(X)$ is the set of
  elements $x \in O$ with $|x| < X$, represented as the product of
  distinct primary primes of degree two.  Write $d$ as the product of
  primary primes (cf.~Equation \eqref{eq:d-expression}):
  \begin{equation*}
    d = \prod_{i \in I} \pi_i \cdot
    \prod_{j \in J} (-q_j)^{n_j},
  \end{equation*}
  and let
  \begin{equation*}
    d_1 = \prod_{i \in I} \pi_i, \qquad
    d_2 = \prod_{j \in J} (-q_j)^{n_j}, \qquad
    \widehat{d_2} = \prod_{j \in J} (-q_j).
  \end{equation*}
  For each function $f: J \to \{0, 1\}$ we define
  \begin{equation*}
    \cQ_f(X) := \lcrc{Q \in \cQ(X) : \ord_{q_j} Q = f(j)}.
  \end{equation*}
  Then $\mathcal{Q}(X)$ is the disjoint union of $\mathcal{Q}_f(X)$,
  for all $f \in \{0, 1 \}^J$.  For a fixed $f \in \{0, 1\}^J$, define $L = f^{-1}(0)$ and $T = f^{-1}(1)$.  Then we can write
  $d = \prod_{i \in I} \pi_i \cdot \prod_{l \in L} (-q_l)^{n_l} \cdot
  \prod_{t \in T} (-q_t)^{n_t}$.  For $Q \in \mathcal{Q}_f(X)$, we write
  $Q = \prod_{k \in K} (-q_k) \cdot \prod_{t \in T} (-q_t)$.

  By Proposition \ref{prop:determination-of-root-number} and Corollary
  \ref{cor:expanding-d}, we have, for each $i \in I$,
  \begin{multline*}
    w_{\pi_i}(\chi_{dQ}) = \eta \cdot \frac{\pi_i}{|\pi_i|} \cdot
    \overline{\quasym{dQ/\pi_i}{\pi_i}}\cdot
    \overline{\quasym{\overline{\pi_i}^{-1}}{\pi_i}}\cdot
    G(\chi_{dQ}^{\pi_i}) \\
    = \eta \cdot \frac{\pi_i}{|\pi_i|} \cdot
    \overline{\quasym{d/\pi_i}{\pi_i}} \cdot
    \overline{\quasym{\overline{\pi_i}^{-1}}{\pi_i}}\cdot
    G(\chi_{d}^{\pi_i}) \cdot \overline{\quasym{Q}{\pi_i}} =
    w_{\pi_i}(\chi_{d}) \cdot \prod_{k \in K}
    \overline{\quasym{-q_k}{\pi_i}} \cdot \prod_{t \in T}
    \overline{\quasym{-q_t}{\pi_i}}.
  \end{multline*}
  Also for each $k \in K$,
  \begin{equation*}
    w_{q_k}(\chi_{dQ}) = w_{q_k}(\chi_{d_1\cdot(-q_k)}) =
    \overline{\quasym{d_1}{-q_k}} =\prod_{i \in I} \overline{ \quasym{-q_k}{\pi_i} },
  \end{equation*}
  and for $l \in L$,
  \begin{equation*}
    w_{q_l}(\chi_{{d}}) = w_{q_l}(\chi_{{Qd}}).
  \end{equation*}

  % For $Q \in \cQ(X)$, let
  % $d = \prod_{i} \pi_i \cdot \prod_{l} (-q_l)^{n_l} \prod_t (-q_t)^{n_t}$ and
  % $Q = \prod_{k} (-q_{k}) \prod_t (-q_t)$ be a primary prime element decomposition such that $\lcrc{q_j}$ and $\lcrc{q_k}$ are disjoint.
  %  Then by Proposition
  % \ref{prop:determination-of-root-number} (2) and by Corollary
  % \ref{cor:expanding-d} (2), we have

  % where $v_i$ is a prime corresponding to $\pi_i$ and by Corollary \ref{cor:expanding-d} (2),
  % \begin{equation*}
  %   \qquad
  % \end{equation*}
  % for $q = q_l$. %, q_k$ \textcolor{lust}{and $q_t$ such that $n_t + 1 \not\equiv 0 \pmod{4}$}.
  Therefore,
  \begin{align}
    \frac{w(\chi_{{dQ}})}{w_2(\chi_{{dQ}})}
    &= -i \prod_{i \in I} w_{\pi_i}(\chi_{dQ}) \prod_{l \in L}
      w_{q_l}(\chi_{dQ})
      \prod_{k \in K} w_{q_k}(\chi_{dQ})
      \prod_{t \in T} w_{q_t}(\chi_{dQ}) \nonumber \\
    &= -i \prod_{i \in I} \lbrb{w_{\pi_i}(\chi_d) \prod_{k \in K}
      \overline{\quasym{-q_k}{\pi_i}} \prod_{t \in T}
      \overline{\quasym{-q_t}{\pi_i}}}
      \lbrb{\prod_{l \in L} w_{q_l}(\chi_d)} \lbrb{ \prod_{t \in T}
      w_{q_t}(\chi_{dQ}) } \prod_{k \in K} \lbrb{\prod_{i \in I}
      \overline{\quasym{-q_k}{\pi_i}}} \nonumber \\
    &= \frac{w(\chi_d)}{w_2(\chi_d)}
      \prod_{i \in I} \lbrb{  \prod_{k \in K}
      \overline{\quasym{-q_k}{\pi_i}}\prod_{t \in T}
      \overline{\quasym{-q_t}{\pi_i}}} \prod_{k \in K}\lbrb{\prod_{i
      \in I} \overline{\quasym{-q_k}{\pi_i}}} \lbrb{\prod_{t \in T}
      \frac{w_{q_t}(\chi_{dQ})}{w_{q_t}(\chi_{d})} } \nonumber \\
    &= \frac{w(\chi_d)}{w_2(\chi_d)}
      \prod_{i \in I}  \prod_{k \in K} {\quadsym{-q_k}{p_i}}
      \prod_{i \in I}\prod_{t \in T} \overline{\quasym{-q_t}{\pi_i}}
      \lbrb{\prod_{t \in T}
      \frac{w_{q_t}(\chi_{dQ})}{w_{q_t}(\chi_{d})}
      }, \label{term:w/w2}
  \end{align}
  where $p_i = \pi_i\overline{\pi_i}$.  Here we use
  $\quasym{-q_{k}}{\pi_{i}}^{2} = \quadsym{-q_{k}}{p_{i}}$.
  % Now we consider $\Omega(X)$ for $k=4$ and
  % $b=3$. Every element of $\cQ(X)$ can be written by $\prod
  % (-q_k)$ and that of $\Omega(X)$ written by $\prod
  % q_k$ for the rational prime $q_k$ with $q_k \equiv 3
  % \pmod{4}$.  We define $J := \lcrc{q_j}$, and for an element $f \in
  % \lcrc{0,1}^J$ which is a function from $J$ to $\lcrc{0,
  % 1}$, we also define
% \begin{equation*}
% \cQ_f(X) := \lcrc{Q \in \cQ(X) : q_j \mid Q \textrm{ if and only if
% } f(q_j) = 1}.
% \end{equation*}
We note that for all $Q \in \cQ_f(X)$, the terms
\begin{equation*}
\prod_{i \in I}\prod_{t \in T} \overline{\quasym{-q_t}{\pi_i}}
\lbrb{\prod_{t \in T} \frac{w_{q_t}(\chi_{dQ})}{w_{q_t}(\chi_{d})} }
\end{equation*}
in (\ref{term:w/w2}) are equal.
%$

As before, let $\mathcal{Q}_{f, a, m}(X)$ be the set of elements in
$\mathcal{Q}_f(X)$ that is equivalent to $a$ modulo $m$, and $P :=
\prod_{i \in I} p_i$.  Then,
\begin{align*}
  \sum_{Q \in \mathcal{Q}_f(X)} \frac{w(\chi_{dQ})}{w_2(\chi_{dQ})}
  &= \sum_{Q \in \mathcal{Q}_f(X)} \frac{w(\chi_d)}{w_2(\chi_d)}
    \prod_{i \in I}  \prod_{k \in K} {\quadsym{-q_k}{p_i}} \prod_{i
    \in I}\prod_{t \in T} \overline{\quasym{-q_t}{\pi_i}}
    \lbrb{\prod_{t \in T} \frac{w_{q_t}(\chi_{dQ})}{w_{q_t}(\chi_{d})}
    }  \\
  &= \frac{w(\chi_{d})}{w_2(\chi_{d})}
    \prod_{i \in I}\prod_{t \in T} \overline{\quasym{-q_t}{\pi_i}}
    \lbrb{\prod_{t \in T} \frac{w_{q_t}(\chi_{dQ})}{w_{q_t}(\chi_{d})}
    }  \lbrb{\sum_{Q \in \mathcal{Q}_f(X)} \prod_{i \in I} \prod_{k \in
    K} \quadsym{-q_k}{p_i}} \\
  &= \frac{w(\chi_{d})}{w_2(\chi_{d})}
    \prod_{i \in I}\prod_{t \in T} \overline{\quasym{-q_t}{\pi_i}}
    \lbrb{\prod_{t \in T} \frac{w_{q_t}(\chi_{dQ})}{w_{q_t}(\chi_{d})}
    } \lbrb{\prod_{i \in I} \prod_{t \in T}
    \overline{\quasym{-q_t}{\pi_i}^{2}}}
    \sum_{Q \in \mathcal{Q}_f(X)} \quadsym{Q}{P} \\
  &= \frac{w(\chi_{d})}{w_2(\chi_{d})}
    \prod_{i \in I}\prod_{t \in T} \overline{\quasym{-q_t}{\pi_i}^3}
    \lbrb{\prod_{t \in T} \frac{w_{q_t}(\chi_{dQ})}{w_{q_t}(\chi_{d})}
    } \lbrb{\sum_{a,+} \sum_{Q \in \mathcal{Q}_{f, a, P}(X)}1
    - \sum_{a,-} \sum_{Q \in \mathcal{Q}_{f, a, P}(X)}1},
\end{align*}
where the sum $(a, +)$ is taken over quadratic residues $a$ modulo $P$
and $(a, -)$ is taken over non-residues.  Since
$n \in \mathcal{Q}_{f, a, P}(X)$ if and only if $n \in \cQ(X)$,
$n \equiv a \pmod{P}$ and $n \not\equiv 0 \pmod{q_j}$ for $q_j$
satisfying $f(j) = 0$, we get
\begin{multline*}
  \sum_{Q \in \mathcal{Q}_f(X)} \frac{w(\chi_{dQ})}{w_2(\chi_{dQ})} \\
  = \frac{w(\chi_{d})}{w_2(\chi_{d})}
\prod_{i \in I}\prod_{t \in T} \overline{\quasym{-q_t}{\pi_i}}
\lbrb{\prod_{t \in T} \frac{w_{q_t}(\chi_d)}{w_{q_t}(\chi_{dQ})} }
\lbrb{\sum_{a,+} \sum_{\substack{b \equiv a (P) \\ b \not\equiv 0
      (q_j) \\ \textrm{ for }f(j) = 0}}|\mathcal{Q}_{b,
    P\widehat{d_2}}(X)|
- \sum_{a,-} \sum_{\substack{b \equiv a (P) \\ b \not\equiv 0 (q_j) \\
    \textrm{ for } f(j) = 0}}|\mathcal{Q}_{b, P\widehat{d_2}}(X)|},
\end{multline*}
where $\mathcal{Q}_{b, P\widehat{d_2}}(X)$ is the subset of
$\mathcal{Q}(X)$ consisting of integers $x$ satisfying
$x \equiv b \pmod{P \widehat{d_2}}$.  Similarly consider
$\Omega_{b, P\widehat{d_2}}(X)$ is the set of positive integers $x$
such that $x \le X$, that each prime divisor of $x$ is congruent to 3
modulo 4, and that $x \equiv b \pmod{P \widehat{d_2}}$.  Then one can
easily see that the map
\begin{equation*}
x \mapsto |x|, \qquad \mathcal{Q}_{b,P\widehat{d_2}}(X) \cup
\mathcal{Q}_{-b,P\widehat{d_2}}(X) \to \Omega_{b,P\widehat{d_2}}(X)
\cup \Omega_{-b,P\widehat{d_2}}(X)
\end{equation*}
is bijective. Since $P\widehat{d_2} \equiv 1\pmod{4}$, $b$ and $-b$
are both quadratic residues or both quadratic non-residues modulo
$P\widehat{d_2}$. Therefore
\begin{equation*}
\sum_{a,+} \sum_{\substack{b \equiv a (P) \\ b \not\equiv 0 (q_j) \\
    \textrm{ for } f(j) = 0}}|\mathcal{Q}_{b, P\widehat{d_2}}(X)| =
\sum_{a,+} \sum_{\substack{b \equiv a (P) \\ b \not\equiv 0 (q_j) \\
    \textrm{ for } f(j) = 0}} |\Omega_{a,P\widehat{d_2}}(X)|.
\end{equation*}
and the same holds for $\sum_{a, -}$.
By Proposition \ref{arithmeticprogression},
\begin{equation*}
\sum_{Q \in \mathcal{Q}_f(X)} \frac{w(\chi_{dQ})}{w_2(\chi_{dQ})}=
\sum_{a,+} \sum_{\substack{b \equiv a (P) \\ b \not\equiv 0 (q_j) \\
    \textrm{ for } f(j) = 0}}|\mathcal{Q}_{b, P\widehat{d_2}}(X)|
- \sum_{a,-} \sum_{\substack{b \equiv a (P) \\ b \not\equiv 0 (q_j) \\
    \textrm{ for } f(j) = 0}}|\mathcal{Q}_{b, P\widehat{d_2}}(X)|  \ll
o(1)\frac{X}{(\log X)^{\frac{1}{2} }}.
\end{equation*}
Therefore,
\begin{align*}
  \sum_{Q \in \cQ(X)} \frac{w(\chi_{dQ})}{w_2(\chi_{dQ})}
  &= \sum_{f \in
    \{0, 1\}^J} \sum_{Q \in \mathcal{Q}_f(X)}
    \frac{w(\chi_{dQ})}{w_2(\chi_{dQ})}
    \ll o(1) \frac{X}{(\log X)^{\frac{1}{2}}},
\end{align*}
so for any $\epsilon > 0$, there is a $X_0$ such that
\begin{equation*}
  \sum_{Q \in \cQ(X)} \frac{w(\chi_{dQ})}{w_2(\chi_{dQ})}  \leq c \cdot
  \epsilon \cdot \frac{X}{ (\log X)^{\frac{1}{2}}},
\end{equation*}
for all $X \geq X_0$, where $c$ is the constant in the statement of
Lemma \ref{arithmeticprogression}.  Hence
\begin{equation*}
  \lim_{X \to \infty} \frac{1}{|\cQ(X)|} \sum_{Q \in \cQ(X)}
  \frac{w(\chi_{{dQ}})}{w_2(\chi_{{dQ}})} \leq
  \lim_{X \to \infty} \frac{1}{|\cQ(X)|} \cdot c \cdot\epsilon\cdot
  \frac{X}{(\log X)^{\frac{1}{2} }} \leq \epsilon,
\end{equation*}
again by Lemma \ref{arithmeticprogression}.
\end{proof}

% We remark that if one want to remove the square-free condition on the set $\cQ(X)$ and the degree one prime divisor of $d$, then we need to compute
% \begin{equation*}
% \sum_{Q \in \cQ(X)} \overline{ \quasym{\prod_{k \in K} (-q_k)^{n_k}}{ \prod \pi_i} \quasym{\prod_{k \in K} (-q_k)}{ \prod_{i \in I} \pi_i^{n_i}}}.
% \end{equation*}

\section*{Acknowledgement}
\label{sec:acknowledgement}

The first author 
was supported by
Basic Science Research Program
through the National Research Foundation of Korea (NRF) funded
by the Ministry of Education (Grant No. 2018R1C1C1004264). He would like to thank
Professor Peter J.~Cho and Professor Yoonbok Lee for useful
discussion.  He also thanks IBS-CGP for their hospitality and
financial support.  
The second author was supported by
Basic Science Research Program
through the National Research Foundation of Korea (NRF) funded
by the Ministry of Education 
(Grant Nos. 2019R1A6A1A11051177 
and 2020R1I1A1A01074746). 
The third author was supported by IBS-R003-D1.
%maybe
%The third author was supported by IBS-R003-D1.  
The second and the third authors also thank UNIST for their hospitality.
Authors thank the referee for the valuable suggestions.

\bibliographystyle{alpha}
\bibliography{root-numbers}

\end{document}